\title{The Poisson boundary of $\text{Out}(F_N)$}
\author{Camille Horbez}
\begin{document}
\maketitle
\newtheorem{de}{Definition} [section]
\newtheorem{theo}[de]{Theorem} 
\newtheorem{prop}[de]{Proposition}
\newtheorem{lemma}[de]{Lemma}
\newtheorem{cor}[de]{Corollary}
\newtheorem{propd}[de]{Proposition-Definition}

\theoremstyle{remark}
\newtheorem{rk}[de]{Remark}
\newtheorem{ex}[de]{Example}
\newtheorem{question}[de]{Question}

\normalsize

\addtolength\topmargin{-.5in}
\addtolength\textheight{1.in}
\addtolength\oddsidemargin{-.045\textwidth}
\addtolength\textwidth{.09\textwidth}

\begin{abstract}
Let $\mu$ be a probability measure on $\text{Out}(F_N)$ with finite first logarithmic moment with respect to the word metric, finite entropy, and whose support generates a nonelementary subgroup of $\text{Out}(F_N)$. We show that almost every sample path of the random walk on $(\text{Out}(F_N),\mu)$, when realized in Culler and Vogtmann's outer space, converges to the simplex of a free, arational tree. We then prove that the space $\mathcal{FI}$ of simplices of free and arational trees, equipped with the hitting measure, is the Poisson boundary of $(\text{Out}(F_N),\mu)$. Using Bestvina-Reynolds' and Hamenstädt's description of the Gromov boundary of the complex $\mathcal{FF}_N$ of free factors of $F_N$, this gives a new proof of the fact, due to Calegari and Maher, that the realization in $\mathcal{FF_N}$ of almost every sample path of the random walk converges to a boundary point. We get in addition that $\partial\mathcal{FF}_N$, equipped with the hitting measure, is the Poisson boundary of $(\text{Out}(F_N),\mu)$.  
\end{abstract}

\section*{Introduction}

Over the past decades, the study of the group $\text{Out}(F_N)$ of outer automorphisms of a finitely generated free group has greatly benefited from the study of its action on some geometric complexes, among which stand Culler and Vogtmann's outer space $CV_N$, which is the space of homothety classes of free, minimal, isometric and simplicial actions of $F_N$ on simplicial metric trees \cite{CV86}, and Hatcher and Vogtmann's complex of free factors \cite{HV98}. A main source of inspiration in this study comes from analogies with mapping class groups of surfaces, and their actions on the associated Teichmüller spaces and curve complexes. We aim at understanding the behaviour of random walks on $\text{Out}(F_N)$-complexes.

Given a countable group $G$ and a probability measure $\mu$ on $G$, the \emph{(right) random walk} on $(G,\mu)$ is the Markov chain on $G$ whose initial distribution is given by the Dirac measure at the identity element, and whose transition probabilities are given by $p(g,h):=\mu(g^{-1}h)$. In other words, the position of the random walk on $(G,\mu)$ at time $n$ is given from its position $g_0=e$ by successive multiplications on the right by independent $\mu$-distributed increments $s_i$, i.e. $g_n=s_1\dots s_n$, and the distribution of this position is given by the $n$-fold convolution of $\mu$. We equip the \emph{path space} $G^{\mathbb{N}}$ with the measure $\mathbb{P}$ defined as the image of the product measure $\mu^{\otimes\mathbb{N}}$ under the map $(s_i)_{i\in\mathbb{N}}\mapsto (g_i)_{i\in\mathbb{N}}$.  

Random walks on mapping class groups have first been studied by Kaimanovich and Masur, whose seminal paper \cite{KM96} has been a main source of inspiration for our work. Given a probability measure $\mu$ on the mapping class group $Mod(S)$ of a surface $S$, whose support generates a nonelementary subgroup of $Mod(S)$, Kaimanovich and Masur have shown that $\mathbb{P}$-almost every sample path of the random walk on $(Mod(S),\mu)$ converges to a uniquely ergodic minimal measured foliation in the Thurston boundary $\mathcal{PMF}$ of $Teich(S)$, and that the hitting measure $\nu$ is the only $\mu$-stationary measure on $\mathcal{PMF}$. Using Reynolds' study of \emph{arational trees} in the boundary of outer space \cite{Rey12} as an analogue for minimal foliations in the boundary of Teichmüller spaces, we partly translate Kaimanovich and Masur's work to the $\text{Out}(F_N)$ case. A tree $T\in\partial CV_N$ is \emph{arational} if every proper free factor of $F_N$ acts freely and discretely on its minimal subtree in $T$. Arational trees are either free (and indecomposable) actions of $F_N$, or they are dual to an arational lamination on a surface having a single boundary component \cite{Rey12}. 

Associated to any $T\in\overline{CV_N}$ with dense orbits is a \emph{simplex} of length measures \cite{Gui00}, which describes the collection of all possible metrics on the topological tree underlying $T$. We denote by $\mathcal{AT}$ the space of equivalence classes of arational trees, and by $\mathcal{FI}$ the space of equivalence classes of free arational trees, under the equivalence relation that identifies two trees that belong to the same simplex. A tree is \emph{uniquely ergometric} if its simplex is reduced to a point. Uniquely ergometric trees provide an analogue of uniquely ergodic foliations on surfaces. We don't know whether sample paths of random walks on $\text{Out}(F_N)$ almost surely converge to uniquely ergometric trees. However, we shall prove the following statement. We define a subgroup $H\subseteq\text{Out}(F_N)$ to be \emph{nonelementary} if the $H$-orbits of all proper free factors of $F_N$, of all projective arational trees, and of all conjugacy classes of elements of $F_N$, are infinite.

\begin{theo} \label{intro-convergence-drift}
Let $\mu$ be a probability measure on $\text{Out}(F_N)$, whose support generates a nonelementary subgroup of $\text{Out}(F_N)$. For $\mathbb{P}$-a.e. sample path $\mathbf{g}:=(g_n)_{n\in\mathbb{N}}$ of the random walk on $(\text{Out}(F_N),\mu)$, there exists a simplex $\xi(\mathbf{g})\in\mathcal{FI}$ such that for all $T_0\in CV_N$, the sequence $(g_n T_0)_{n\in\mathbb{N}}$ converges to $\xi(\mathbf{g})$. The hitting measure is nonatomic, and it is the only $\mu$-stationary measure on $\mathcal{FI}$.
\end{theo}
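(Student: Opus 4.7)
The plan is to adapt Kaimanovich and Masur's strategy \cite{KM96} for mapping class groups, replacing $Teich(S)$ by $CV_N$ and $\mathcal{PMF}$ by the space $\mathcal{FI}$ of simplices of free arational trees, with Reynolds' classification of arational trees playing the role of Masur's criterion for unique ergodicity. The first step is to produce, via Kakutani's fixed point theorem applied to the convolution operator $\nu \mapsto \mu\ast\nu$ on the compact space of Borel probability measures on $\overline{CV_N}$, a $\mu$-stationary probability measure $\nu$ on $\overline{CV_N}$. The aim is then to show that $\nu$ is concentrated on the preimage of $\mathcal{FI}$, so that $\nu$ descends to a stationary measure on $\mathcal{FI}$, which will turn out to be the hitting measure.

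The central step is to establish that $\nu$-almost every tree is free and arational. Non-arational trees can be detected by a countable family of ``free-factor obstructions'': a tree $T$ is non-arational precisely when some proper free factor fails to act freely and discretely on its minimal subtree, which gives a countable partition of the non-arational locus according to which free factors witness the failure. One then uses the standard argument that an $H$-invariant ``countable collection of strata with positive $\nu$-mass'' would force a finite $H$-orbit among them: examining the maximal-mass atoms in the induced measure on the set of conjugacy classes of free factors, $\mu$-stationarity forces the maximal atoms to form a finite $H$-invariant set, contradicting the nonelementary assumption on free factors. The same orbit-recurrence argument applied to conjugacy classes of elements of $F_N$ rules out the arational but non-free trees, which by Reynolds are dual to arational laminations on once-punctured surfaces and come from countably many surface subgroups. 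Hence $\nu$ pushes forward to a stationary measure $\check{\nu}$ on $\mathcal{FI}$, and an atom of $\check{\nu}$ would similarly give a finite $H$-orbit in $\mathcal{FI}$, contradicting the nonelementary assumption on projective arational trees; so $\check{\nu}$ is nonatomic.

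For the convergence of sample paths, I would invoke Furstenberg's classical convergence theorem: for $\mathbb{P}$-a.e.\ $\mathbf{g}$, the translates $g_n\check{\nu}$ converge weakly to a probability measure $\check{\nu}_{\mathbf{g}}$ on $\mathcal{FI}$. The key is to show $\check{\nu}_{\mathbf{g}}$ is a Dirac mass $\delta_{\xi(\mathbf{g})}$ almost surely. I would prove this by lifting the analysis to $\overline{CV_N}$, extracting a limit of $(g_n T_0)$ after passing to a subsequence using compactness, identifying the limit as a free arational tree by the preceding concentration result, and combining this with a north-south-type dynamical statement on the simplex-quotient: a sequence $g_n\in\text{Out}(F_N)$ all of whose weak limits in $\overline{CV_N}$ project to the same point of $\mathcal{FI}$ must translate compact sets of $CV_N$ into arbitrarily small neighborhoods of that simplex. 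Once the Dirac statement is established, convergence $g_n T_0 \to \xi(\mathbf{g})$ for every $T_0\in CV_N$ follows. Uniqueness of the $\mu$-stationary measure on $\mathcal{FI}$ is then automatic: any such measure must equal the pushforward $\xi_\ast\mathbb{P}$ by integrating the convergence statement against a continuous test function.

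The main obstacle will be the concentration step on $\mathcal{FI}$: unlike $\mathcal{PMF}$, the boundary $\partial CV_N$ has a rich stratification (non-arational simplicial trees, mixed trees, trees with nontrivial arc stabilizers, surface-type arational trees), and giving a Borel description of each stratum in terms of countably many free-factor or conjugacy-class obstructions, so that the orbit-recurrence argument can be applied uniformly, requires genuine input from the structure theory of $\overline{CV_N}$ and Reynolds' work. A secondary difficulty is that free arational trees need not be uniquely ergometric, so the ``attractor'' of a contracting sequence is a simplex rather than a point, and convergence statements must be formulated after passing to the quotient $\overline{CV_N}\to\mathcal{FI}$, where topological control is weaker than on $\overline{CV_N}$ itself.
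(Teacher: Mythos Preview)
Your outline follows the same Kaimanovich--Masur template as the paper, and the concentration and nonatomicity steps are correctly sketched: the paper likewise uses the orbit-recurrence lemma (if a measurable set has infinite $gr(\mu)$-orbit of pairwise disjoint translates, it has stationary measure zero) applied first to the free-factor stratification to force $\nu(\widetilde{\mathcal{AT}})=1$, then to the boundary-curve conjugacy classes of surface-type arational trees to force $\nu(\widetilde{\mathcal{FI}})=1$, and finally to simplices themselves for nonatomicity.

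The genuine gap is in your convergence step. You write that you would ``identify the limit as a free arational tree by the preceding concentration result,'' but the concentration result only says that $\nu$ (and hence, via Furstenberg's decomposition $\nu=\int\lambda(\mathbf g)\,d\mathbb P$, the limit measure $\lambda(\mathbf g)$) is supported on $\widetilde{\mathcal{FI}}$; it says nothing directly about any individual accumulation point of $(g_nT_0)$. More seriously, the ``north-south-type dynamical statement'' you invoke is exactly the hard part, and you have not named a mechanism for it. In $\mathcal{PMF}$ this step is driven by the intersection pairing between measured foliations; the analogous tool here is the Kapovich--Lustig intersection form $\langle\cdot,\cdot\rangle:\overline{cv_N}\times Curr_N\to\mathbb R_+$ together with the unique-duality theorem of Bestvina--Reynolds and Hamenst\"adt: if $T_1\in\widetilde{\mathcal{FI}}$ and $\langle T_1,\eta\rangle=\langle T_2,\eta\rangle=0$, then $T_2\sim T_1$. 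The paper's argument runs as follows: pick any accumulation point $T_\infty$ of $(g_n\ast)$, choose $\eta\in\text{Dual}(T_\infty)$, pass to a subsequence along which $h_n^{-1}\eta$ and $h_n^{-1}\eta_0$ converge projectively, and then a direct computation with the intersection form shows that \emph{every} limit point $T'_\infty$ of $(h_nT')$, for $T'$ in a set of full $\nu$-measure, satisfies $\langle T'_\infty,\eta\rangle=0$. Unique duality then pins all such limit points to the single $\sim$-class of $T_\infty$, and the same computation handles $T'\in CV_N$. Without currents and unique duality, there is no evident contraction mechanism on $\overline{CV_N}$ that would give your north-south statement; this is the missing idea in your proposal.

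Your uniqueness argument is fine and matches the paper's.
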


We then show (under some further assumptions on the measure $\mu$) that $\mathcal{FI}$, equipped with the hitting measure $\nu$, is the Poisson boundary of $(\text{Out}(F_N),\mu)$. Theorem \ref{intro-convergence-drift} ensures that $(\mathcal{FI},\nu)$ is the typical example of a $\mu$-boundary. A \emph{$\mu$-boundary} is a probability space $(B,\nu)$, which is the quotient of the path space $(G^{\mathbb{N}},\mathbb{P})$ with respect to some shift-invariant and $G$-invariant measurable partition (in particular $\nu=bnd_{\ast}\mathbb{P}$, where $bnd:G^{\mathbb{N}}\to B$ is the projection map).  

A $\mu$-boundary $(B,\nu)$ is a \emph{Poisson boundary} if it is maximal, i.e. every $\mu$-boundary is the quotient of $(B,\nu)$ under some $G$-invariant measurable partition. If we equip the path space $G^{\mathbb{N}}$ with the measure $\mathbb{P}_m$ corresponding to an initial distribution for the random walk given by the counting measure on $G$, then the space of ergodic components of the shift in $G^{\mathbb{N}}$ is an abstract realization of the Poisson boundary of $(G,\mu)$. Given a group $G$ equipped with a probability measure $\mu$, a natural question is that of identifying the Poisson boundary of $(G,\mu)$ with some "concrete" $G$-space (which will usually be a topological space, although the Poisson boundary does not carry any intrinsic topology, and is only defined as a measure space). One motivation for this question comes from the problem of understanding bounded $\mu$-harmonic functions on $G$. Indeed, when $(B,\nu)$ is the Poisson boundary of $(G,\mu)$, the formula

\begin{displaymath}
f(g)=\int_{B}\widehat{f}(x)dg_{\ast}\nu(x)
\end{displaymath}

\noindent gives an isometry between the Banach space of $\mu$-essentially bounded $\mu$-harmonic functions on $X$, and $L^{\infty}(B)$. Our main result is the following. 

\begin{theo} \label{intro-Poisson}
Let $\mu$ be a probability measure on $\text{Out}(F_N)$, whose support is finite and generates a nonelementary subgroup of $\text{Out}(F_N)$, and let $\nu$ be the hitting measure on $\mathcal{FI}$. Then the measure space $(\mathcal{FI},\nu)$ is the Poisson boundary of $(\text{Out}(F_N),\mu)$.
\end{theo}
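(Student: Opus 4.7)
The plan is to apply Kaimanovich's strip criterion for identifying Poisson boundaries. Theorem~\ref{intro-convergence-drift} already provides half of what is needed: the space $(\mathcal{FI},\nu)$ is a $\mu$-boundary, and by applying the same theorem to the reflected measure $\check{\mu}$ (whose support also generates a nonelementary subgroup), one obtains a $\check{\mu}$-boundary $(\mathcal{FI},\check{\nu})$ to which almost every backward sample path converges. Since $\mu$ has finite support, the finite entropy and finite first logarithmic moment hypotheses of Kaimanovich's criterion are automatic, so it suffices to construct a measurable $\text{Out}(F_N)$-equivariant strip map $S : \mathcal{FI} \times \mathcal{FI} \to 2^{\text{Out}(F_N)}$ with nonempty values on a set of full $\check{\nu}\otimes\nu$-measure, and to verify a subexponential growth bound of the strips along typical trajectories.

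To define the strips I would work through the free factor complex $\mathcal{FF}_N$. By the Bestvina--Reynolds and Hamenstädt identification, distinct equivalence classes of free arational trees $T_-,T_+$ determine distinct points of $\partial\mathcal{FF}_N$; choose a bi-infinite quasi-geodesic $\gamma_{T_-,T_+}\subset\mathcal{FF}_N$ joining them in a Borel way, and fix a basepoint $F_0\in\mathcal{FF}_N$. One then sets
\[
S(T_-,T_+) := \{\phi\in\text{Out}(F_N) \,:\, d_{\mathcal{FF}_N}(\phi\cdot F_0,\gamma_{T_-,T_+})\leq D\}
\]
for a suitable constant $D$. Equivariance and measurability follow from the construction, and nonemptiness on a full-measure set should follow from the fact that a typical quasi-geodesic passes within uniformly bounded distance of some translate of $F_0$. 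The sample-path estimate is that $g_n^{-1}\cdot S(T_-(\mathbf{g}),T_+(\mathbf{g}))$ meets any fixed finite set in subexponentially many elements $\mathbb{P}$-almost surely: hyperbolicity of $\mathcal{FF}_N$ (Bestvina--Feighn) forces the strip to intersect each $\mathcal{FF}_N$-ball in polynomially many points, while acylindricity of the $\text{Out}(F_N)$-action on $\mathcal{FF}_N$ (Bestvina--Mann--Reynolds) controls how many automorphisms realize a given coarse displacement of $F_0$. Combined with the positive drift of $(g_n F_0)$ in $\mathcal{FF}_N$ supplied by Theorem~\ref{intro-convergence-drift}, this yields the required subexponential bound.

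The main obstacle will be the construction and sample-path control of the strip. Unlike the mapping class group setting of Kaimanovich--Masur, where Teichmüller geodesics provide canonical strips, outer space has no natural unique geodesic between two arational trees, and the quasi-geodesic must be built coarsely through $\mathcal{FF}_N$. Ensuring that the resulting strip is large enough to be nonempty almost surely, yet thin enough to satisfy the subexponential growth condition, is the heart of the argument. A related subtlety is that the Bestvina--Reynolds/Hamenstädt correspondence collapses an entire simplex in $\partial CV_N$ to a single boundary point of $\mathcal{FF}_N$, so one must verify that the strip map is well-defined on $\mathcal{FI}$ and that this collapse does not destroy the geometric control of typical sample paths — in particular, that almost every backward-forward pair $(T_-,T_+)$ lands in the part of $\partial\mathcal{FF}_N\times\partial\mathcal{FF}_N$ where the quasi-geodesic construction is nondegenerate.
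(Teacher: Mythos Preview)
Your high-level plan matches the paper: both apply Kaimanovich's strip criterion, both use Theorem~\ref{intro-convergence-drift} (and its $\check{\mu}$-analogue) to produce the pair of boundaries, and both note that finite support makes the entropy and moment conditions automatic. The divergence is in the construction of the strips, and here your proposal has a genuine gap.

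Your strip is $S(T_-,T_+)=\{\phi:d_{\mathcal{FF}_N}(\phi F_0,\gamma_{T_-,T_+})\le D\}$. The problem is that the $\text{Out}(F_N)$-action on $\mathcal{FF}_N$ is not proper: the stabiliser $H$ of a free factor is infinite and has exponential growth in the word metric. If $\phi F_0$ lies within $D$ of $\gamma$, then so does $(\phi h)F_0=\phi F_0$ for every $h\in H$, so the entire coset $\phi H$ sits inside $S(T_-,T_+)$, and $|\phi H\cap\mathcal{B}_k|$ already grows exponentially in $k$. Acylindricity does not rescue this: it bounds the number of elements that coarsely fix \emph{two} far-apart points, not the coarse stabiliser of a single point, and your strip condition only pins down the image of one orbit point. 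Hyperbolicity of $\mathcal{FF}_N$ controls the geometry of $\gamma$, not the cardinality of its $\text{Out}(F_N)$-preimage. (Your reformulation of the criterion via $g_n^{-1}S(T_-,T_+)$ meeting a fixed finite set is also not quite Kaimanovich's statement; the criterion asks directly for a polynomial bound on $|S(T_-,T_+)\cap\mathcal{B}_k|$.)

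The paper avoids exactly this obstruction by building the strips in outer space rather than in $\mathcal{FF}_N$. To a pair $(T_-,T_+)\in\mathcal{FI}\times\mathcal{FI}$ it attaches the finite sets of ergodic dual currents $\text{Erg}(T_\pm)$, and defines an $L$-axis $A_L(T_-,T_+)\subset CV_N$ as those trees for which the pair of currents gives an $L$-good estimate of the Lipschitz stretch to every tree in $\overline{CV_N}$ (a simplified version of Hamenst\"adt's lines of minima). The strip is then $\{\Phi:\Phi\ast_{CV_N}\in A_L(T_-,T_+)\}$. A short computation shows the axis is, up to an additive constant, a $d_{\text{sym}}$-geodesic parametrised by a height function, and now the \emph{proper discontinuity} of the $\text{Out}(F_N)$-action on $CV_N$ converts this into linear growth of $|S_L(T_-,T_+)\cap\mathcal{B}_k|$. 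Nonemptiness for some uniform $L$ is obtained by an ergodic-theoretic argument on bilateral paths (Birkhoff applied to the Bernoulli shift). The essential point you are missing is that properness of the action is what makes thinness of a geometric strip translate into thinness of a group-theoretic strip; $CV_N$ has it and $\mathcal{FF}_N$ does not.
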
 

Theorem \ref{intro-Poisson} is actually true under more general assumptions on the measure $\mu$ (finiteness of the support can be replaced by a finite first logarithmic moment condition with respect to the word metric on $\text{Out}(F_N)$, and a finite entropy condition, see Theorem \ref{Poisson}).

In \cite{Kai00}, Kaimanovich has developed tools coming from entropy theory to prove that a $\mu$-boundary is the Poisson boundary. In particular, he provides a "strip criterion" which requires considering a $\mu$-boundary $B_+$ simultaneously with a $\check{\mu}$-boundary $B_-$ (where $\check{\mu}$ is the probability measure on $G$ defined by $\check{\mu}(g):=\mu(g^{-1})$ for all $g\in G$), and assigning to almost every pair of points in $B_-\times B_+$ a "strip" contained in $G$, which is sufficiently thin in the sense that its intersection with balls for a word metric on $G$ grows subexponentially with the radius of the ball. Given a probability measure $\mu$ on the mapping class group $Mod(S)$ of a surface $S$, satisfying some finiteness conditions, and whose support generates a subgroup of $Mod(S)$ that contains two independent pseudo-Anosov homeomorphisms, Kaimanovich and Masur have shown that $(\mathcal{PMF},\nu)$ is the Poisson boundary of $(Mod(S),\mu)$, by using strips coming from Teichmüller geodesics \cite[Theorem 2.3.1]{KM96}.

Our definition of the strips is based on a simplified version of Hamenstädt's construction of \emph{lines of minima} in outer space \cite{Ham12-2}. We now provide an outline of this construction. There is a natural length pairing between trees in $CV_N$ and elements in $F_N$, defined by letting $\langle T,g\rangle$ be the translation length of $g$ in $T$. Kapovich and Lustig have shown \cite{KL09} that this length pairing extends to an intersection form between trees and \emph{geodesic currents}, which were introduced by Kapovich in \cite{Kap05,Kap06}. Given trees $T\in cv_N$ and $T'\in\overline{cv_N}$ (in $\overline{cv_N}$, trees are considered up to isometry, instead of homothety), and a pair $(\eta,\eta')$ of geodesic currents, we define 

\begin{displaymath}
\Lambda_{\eta,\eta'}(T,T'):=\max\{\frac{\langle T',\eta\rangle}{\langle T,\eta\rangle},\frac{\langle T',\eta'\rangle}{\langle T,\eta'\rangle}\}.
\end{displaymath}

This measures the maximal stretch of $\eta$ and $\eta'$ from $T$ to $T'$. Denoting by $\text{Lip}(T,T')$ the smallest Lipschitz constant of an $F_N$-equivariant map from $T$ to $T'$, we always have

\begin{displaymath}
\Lambda_{\eta,\eta'}(T,T')\le \text{Lip}(T,T'),
\end{displaymath}

\noindent and White has shown that we can always find a "candidate" element $g\in F_N$ whose stretch from $T$ to $T'$ is equal to $\text{Lip}(T,T')$ (and we can even choose $g$ among a finite set of elements of $F_N$ that only depends on the tree $T$), see \cite{FM11}. 

For "generic" pairs $(\eta,\eta')$ of currents, we have $\Lambda_{\eta,\eta'}(T,T')>0$, and for all $L\ge 1$, we define the \emph{$L$-axis} of the pair $(\eta,\eta')$ as the set of all trees in $CV_N$ for which 

\begin{displaymath}
1\le\frac{\text{Lip}(T,T')}{\Lambda_{\eta,\eta'}(T,T')}\le L 
\end{displaymath}

\noindent for all $T'\in\overline{CV_N}$. In other words, a tree $T\in CV_N$ is in the $L$-axis of $(\eta,\eta')$ if the stretch of either $\eta$ or $\eta'$ gives a good estimate of the Lipschitz distortion from $T$ to any $T'\in\overline{CV_N}$, up to an error controlled by $L$ (or informally, if the pair $(\eta,\eta')$ is a fairly good pair of "candidates" for the tree $T$). Following Hamenstädt's arguments \cite{Ham12-2}, we show that these axes are close to being geodesics in $CV_N$ for the symmetric Lipschitz metric (see \cite{FM11} for an introduction to this metric), although they may contain holes (notice that the $L$-axis of a pair $(\eta,\eta')$ can even be empty if $L$ is too small). This will be the key point for checking the growth condition on the strips.

Associated to any arational tree $T$ is a finite collection of "ergodic" currents $\text{Erg}(T)$. This enables us to associate an $L$-strip in $\text{Out}(F_N)$ to almost every pair of trees $(T_-,T_+)\in\mathcal{FI}\times\mathcal{FI}$. We then show that we can choose $L$ in a uniform way to ensure that the strips are almost surely nonempty. 
\\
\\
\indent Using recent work of Bestvina and Reynolds \cite{BR13} and Hamenstädt \cite{Ham12}, our results can be interpreted in terms of the free factor complex and its Gromov boundary. When $N\ge 3$, the \emph{free factor complex} $\mathcal{FF}_N$ is the simplicial complex whose vertices are the conjugacy classes of proper free factors of $F_N$, and higher dimensional simplices correspond to chains of inclusion of free factors (one has to slightly modify the definition when $N=2$ to ensure that $\mathcal{FF}_2$ is connected). It was proven to be Gromov hyperbolic by Bestvina and Feighn \cite{BF12}, see also \cite{KR12} for an alternative proof. Its Gromov boundary was identified by Bestvina and Reynolds \cite{BR13} and Hamenstädt \cite{Ham12} with the space of simplices of arational trees in $\partial CV_N$. Using their work, Theorems \ref{intro-convergence-drift} and \ref{intro-Poisson} lead to the following statement.

\begin{theo}\label{intro-factor}
Let $\mu$ be a probability measure on $\text{Out}(F_N)$, whose support generates a nonelementary subgroup of $\text{Out}(F_N)$. Then for $\mathbb{P}$-almost every sample path $\mathbf{g}:=(g_n)_{n\in\mathbb{N}}$ of the random walk on $(\text{Out}(F_N),\mu)$, there exists $\xi(\mathbf{g})\in\partial\mathcal{FF}_N$, such that for all $x\in\mathcal{FF}_N$, the sequence $(g_n x)_{n\in\mathbb{N}}$ converges to $\xi(\mathbf{g})$. The hitting measure $\nu$ on $\partial\mathcal{FF}_N$ is the unique $\mu$-stationary measure on $\partial\mathcal{FF}_N$. If in addition, the measure $\mu$ has finite support, then $(\partial\mathcal{FF}_N,\nu)$ is the Poisson boundary of $(\text{Out}(F_N),\mu)$. 
\end{theo}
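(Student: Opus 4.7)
The strategy is to transfer both Theorem~\ref{intro-convergence-drift} and Theorem~\ref{intro-Poisson} from outer space to the free factor complex via the Bestvina--Reynolds/Hamenstädt identification of $\partial\mathcal{FF}_N$ with the space $\mathcal{AT}$ of simplices of arational trees in $\partial CV_N$. Under this identification, $\mathcal{FI}$ sits as an $\text{Out}(F_N)$-invariant Borel subset of $\partial\mathcal{FF}_N$, so any probability measure on $\mathcal{FI}$ may be viewed as one on $\partial\mathcal{FF}_N$ by trivial extension.

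For convergence, the key ingredient I would extract from \cite{BR13,Ham12} is a boundary-continuity property of the natural coarse $\text{Out}(F_N)$-equivariant projection $\pi:CV_N\to\mathcal{FF}_N$ (sending a tree to a free factor carried by one of its White candidates): if $T_n\in CV_N$ converges projectively to an arational tree whose simplex is $\xi\in\mathcal{AT}$, then $\pi(T_n)$ converges to $\xi$ in $\mathcal{FF}_N\cup\partial\mathcal{FF}_N$. Applying this to $T_n:=g_nT_0$ and combining with Theorem~\ref{intro-convergence-drift} gives, for $\mathbb{P}$-a.e.\ sample path $\mathbf{g}$, convergence of $\pi(g_nT_0)=g_n\pi(T_0)$ to $\xi(\mathbf{g})\in\mathcal{FI}\subseteq\partial\mathcal{FF}_N$. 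Independence on the basepoint $x\in\mathcal{FF}_N$ then follows from Gromov hyperbolicity of $\mathcal{FF}_N$ together with the fact that the $\text{Out}(F_N)$-action has bounded orbits in $\mathcal{FF}_N$ modulo the basepoint.

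For unique $\mu$-stationarity, let $\nu'$ be any $\mu$-stationary probability measure on $\partial\mathcal{FF}_N$. A standard martingale argument in the spirit of \cite{KM96} shows that for $\mathbb{P}$-a.e.\ $\mathbf{g}$ the sequence $(g_n)_{\ast}\nu'$ converges weakly, and the almost-sure convergence $g_n x\to\xi(\mathbf{g})$ forces the limit to be the Dirac mass $\delta_{\xi(\mathbf{g})}$ (once one checks, via hyperbolicity of $\mathcal{FF}_N$ and the fact that the support of $\mu$ generates a group containing many loxodromic elements, that the boundary action is $\mu$-proximal). Integrating yields $\nu'=\int\delta_{\xi(\mathbf{g})}\,d\mathbb{P}(\mathbf{g})$, which is the hitting measure and is concentrated on $\mathcal{FI}$. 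Since Theorem~\ref{intro-convergence-drift} already provides a unique $\mu$-stationary measure on $\mathcal{FI}$, this identifies $\nu'$ unambiguously and shows in addition that the hitting measure $\nu$ on $\partial\mathcal{FF}_N$ is just the push-forward of the one on $\mathcal{FI}$.

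The Poisson boundary statement then drops out with no further work: under the finite-support assumption, Theorem~\ref{intro-Poisson} asserts that $(\mathcal{FI},\nu)$ is the Poisson boundary of $(\text{Out}(F_N),\mu)$, and because $\nu$ is concentrated on the Borel subset $\mathcal{FI}\subseteq\partial\mathcal{FF}_N$, the measure spaces $(\mathcal{FI},\nu)$ and $(\partial\mathcal{FF}_N,\nu)$ are isomorphic as $\text{Out}(F_N)$-spaces, so the latter is the Poisson boundary as well. I expect the main obstacle to be the boundary-continuity step for $\pi$ at arational limits: while this is morally the content of the Bestvina--Reynolds and Hamenstädt identification, one must package their results in the almost-sure, measure-theoretic form needed above, and in particular verify that a.s.\ the limit point in $\partial\mathcal{FF}_N$ can be computed from any coarse choice of candidate free factor in $g_nT_0$.
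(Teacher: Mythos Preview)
Your proposal is correct and follows the same route as the paper, which deduces the result directly from Theorems~\ref{convergence-drift} and~\ref{Poisson} together with the Bestvina--Reynolds/Hamenst\"adt identification $\partial\mathcal{FF}_N\cong\mathcal{AT}$ (stated here as Theorem~\ref{factor-boundary}); the paper's proof is literally a one-line citation of these three ingredients. Your worry about boundary continuity of $\pi$ is unfounded: that is precisely the content of Theorem~\ref{factor-boundary}.

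The only place where you diverge slightly is the argument for unique $\mu$-stationarity on $\partial\mathcal{FF}_N$. You sketch a proximality/martingale argument and flag that one must still check the boundary action is $\mu$-proximal. The paper's (implicit) route is more direct given what is already established: any $\mu$-stationary measure on $\partial\mathcal{FF}_N\cong\mathcal{AT}$ is concentrated on $\mathcal{FI}$, since $\mathcal{AT}\smallsetminus\mathcal{FI}$ consists of simplices of surface-type arational trees and hence decomposes as a countable union of sets indexed by the conjugacy class of the boundary curve, to each of which Lemma~\ref{disjoint-translations} applies exactly as in Proposition~\ref{stationary-on-outer-space}. Uniqueness then follows from the uniqueness on $\mathcal{FI}$ already contained in Theorem~\ref{convergence-drift}. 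Both arguments are valid; the paper's avoids the separate verification of proximality.
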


The convergence statement was obtained with different methods by Calegari and Maher, in the more general context of groups acting on (possibly nonproper) Gromov hyperbolic spaces \cite[Theorem 5.34]{CM13}.

\section*{Acknowledgements}

I warmly thank my advisor Vincent Guirardel, whose advice led to significant improvements in the exposition of the proof.

\section{Preliminaries on $\text{Out}(F_N)$ and related complexes}

\subsection{Outer space}
Let $N\ge 2$. \emph{Outer space} $CV_N$ is defined to be the space of simplicial free, minimal, isometric actions of $F_N$ on simplicial metric trees, up to $F_N$-equivariant homotheties \cite{CV86} (an $F_N$-action on a tree is \emph{minimal} if there is no proper invariant subtree). We denote by $cv_N$ the \emph{unprojectivized outer space}, in which trees are considered up to $F_N$-equivariant isometries, instead of homotheties. The group $\text{Out}(F_N)$ acts on $CV_N$ and on $cv_N$ on the left by setting $\Phi(T,\rho)=(T,\rho\circ \phi^{-1})$ for all $\Phi\in\text{Out}(F_N)$, where $\rho:F_N\to\text{Isom}(T)$ denotes the action, and $\phi\in\text{Aut}(F_N)$ is any lift of $\Phi$ to $\text{Aut}(F_N)$. 

An \emph{$\mathbb{R}$-tree} is a metric space $(T,d_T)$ in which any two points $x$ and $y$ are joined by a unique arc, which is isometric to a segment of length $d_T(x,y)$. Let $T$ be an \emph{$F_N$-tree}, i.e. an $\mathbb{R}$-tree equipped with an isometric action of $F_N$. For $g\in F_N$, the \emph{translation length} of $g$ in $T$ is defined to be

\begin{displaymath}
||g||_T:=\inf_{x\in T}d_T(x,gx).
\end{displaymath}

\noindent \noindent Culler and Morgan have shown in \cite[Theorem 3.7]{CM87} that the map

\begin{displaymath}
\begin{array}{cccc}
i:&cv_N&\to &\mathbb{R}^{F_N}\\
&T&\mapsto & (||g||_T)_{g\in F_N}
\end{array}
\end{displaymath}

\noindent is an embedding, whose image has projectively compact closure $\overline{CV_N}$ \cite[Theorem 4.5]{CM87}. Bestvina and Feighn \cite{BF94}, extending results by Cohen and Lustig \cite{CL95}, have characterized the points of this compactification as being the minimal, \emph{very small} $F_N$-trees, i.e. the $F_N$-trees with trivial or maximally cyclic arc stabilizers and trivial tripod stabilizers. We denote by $\overline{cv_N}$ the lift of $\overline{CV_N}$ to $\mathbb{R}^{F_N}$.  

\subsection{Algebraic laminations and currents} \label{sec-laminations}

Let $\partial^2F_N:=\partial F_N\times\partial F_N\smallsetminus\Delta$, where $\Delta$ is the diagonal. Denote by $i:\partial^2 F_N\to\partial^2F_N$ the involution that exchanges the factors. An \emph{algebraic lamination} is a nonempty, closed, $F_N$-invariant and $i$-invariant subset of $\partial^2F_N$. Any nontrivial element $g\in F_N$ determines an element $(g^{-\infty},g^{+\infty})\in\partial^2F_N$ by setting $g^{-\infty}:=\lim_{n\to +\infty}g^{-n}$ and $g^{+\infty}:=\lim_{n\to +\infty}g^n$. Let $T\in\overline{cv_N}$. For $\epsilon>0$, let 

\begin{displaymath}
L_{\epsilon}(T):=\overline{\{(g^{-\infty},g^{+\infty})|||g||_T<\epsilon\}}.
\end{displaymath}

\noindent Then 

\begin{displaymath}
L(T):=\bigcap_{\epsilon>0}L_{\epsilon}(T)
\end{displaymath}

\noindent is an algebraic lamination, called the lamination \emph{dual} to the tree $T$ (see \cite{CHL08-1,CHL08-2} for an extended study of algebraic laminations). Notice that $L(T)$ only depends on the projective class of the tree $T$, and hence can be defined for $T\in\overline{CV_N}$.  

A \emph{current} on $F_N$ is an $F_N$-invariant Borel measure on $\partial^2F_N$ that is finite on compact subsets of $\partial^2F_N$. The systematic study of currents on $F_N$ was initiated by Kapovich \cite{Kap05,Kap06}. We denote by $Curr_N$ the set of currents on $F_N$, equipped with the weak-$\ast$ topology, and by $\mathbb{P}Curr_N$ the space of projective classes (i.e. homothety classes) of currents. The space $\mathbb{P}Curr_N$ is compact \cite[Proposition 2.5]{Kap06}. 

To every $g\in F_N$ which is not of the form $h^k$ for any $h\in F_N$ and $k>1$ (we say that $g$ is not a \emph{proper power}), one associates a \emph{rational current} $\eta_g$ by letting $\eta_g(S)$ be the number of translates of $(g^{-\infty},g^{+\infty})$ that belong to $S$ for all closed-open subsets $S\subseteq\partial^2F_N$, see \cite[Definition 5.1]{Kap06} (for the case of proper powers one may set $\eta_{h^k}:=k\eta_h$). The group $\text{Out}(F_N)$ acts on $Curr_N$ on the left in the following way \cite[Proposition 2.15]{Kap06}. Given a compact set $K\subseteq\partial^2F_N$, an element $\Phi\in\text{Out}(F_N)$, and a current $\eta\in Curr_N$, we set $\Phi(\eta)(K):=\eta(\phi^{-1}(K))$, where $\phi\in\text{Aut}(F_N)$ is any representative of $\Phi$. In \cite[Section 5]{Kap06}, Kapovich defined an intersection form between elements of $cv_N$ and currents, which was then extended by Kapovich and Lustig to trees in $\overline{cv_N}$ \cite{KL09}.

\begin{theo} \label{intersection-form} (Kapovich-Lustig \cite[Theorem A]{KL09})
There exists a unique $\text{Out}(F_N)$-invariant continuous function 

\begin{displaymath}
\langle .,.\rangle : \overline{cv_N}\times Curr_N\to\mathbb{R}_+
\end{displaymath}

\noindent which is $\mathbb{R}_+$-homogeneous in the first coordinate and $\mathbb{R}_+$-linear in the second coordinate, and such that for all $T\in\overline{cv_N}$, and all $g\in F_N\smallsetminus\{e\}$, we have $\langle T,\eta_g\rangle = ||g||_T$.
\end{theo}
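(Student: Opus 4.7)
The plan is to handle uniqueness and existence separately.

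For uniqueness, the key input is that rational currents are dense in $Curr_N$ (more precisely, $\mathbb{R}_+$-linear combinations $\sum c_i\eta_{g_i}$ are dense), a foundational fact of Kapovich \cite{Kap06}. If $\langle\cdot,\cdot\rangle$ and $\langle\cdot,\cdot\rangle'$ are two forms satisfying the hypotheses, then they agree on $\overline{cv_N}\times\{\eta_g : g\neq e\}$ by the normalization condition; $\mathbb{R}_+$-linearity in the current propagates equality to the rational cone; and continuity in the current then extends equality to all of $\overline{cv_N}\times Curr_N$.

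For existence, the first step is to build the pairing on $cv_N\times Curr_N$ by the standard cylinder formula. Given a simplicial $T\in cv_N$ with quotient graph $\Gamma=T/F_N$, choose a spanning tree and lift to identify oriented edges of $\Gamma$ with generators of a free basis; each oriented edge $e$ determines a cylinder $\mathrm{Cyl}(e)\subseteq\partial^2 F_N$ of bi-infinite words starting with the associated letter. Define
\begin{displaymath}
\langle T,\eta\rangle := \tfrac{1}{2}\sum_{e\in E\Gamma}\ell_T(e)\,\eta(\mathrm{Cyl}(e)).
\end{displaymath}
One checks that this formula is independent of all choices (hence $\mathrm{Out}(F_N)$-invariant), $\mathbb{R}_+$-homogeneous in $T$, $\mathbb{R}_+$-linear in $\eta$, and by counting how the axis of a hyperbolic element $g$ crosses each edge one recovers $\langle T,\eta_g\rangle=\|g\|_T$. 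Continuity in $\eta$ is immediate from weak-$\ast$ convergence, since the cylinders are closed-open; continuity in $T$ follows because over each open simplex the combinatorics of $\mathrm{Cyl}(e)$ is constant while $\ell_T(e)$ varies continuously, with straightforward patching across faces.

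The main obstacle, and the substance of Kapovich--Lustig's argument, is extending the pairing continuously to the boundary $\overline{cv_N}$, where trees may have dense orbits, non-simplicial structure, or nontrivial arc stabilizers, so the edge-cylinder formula breaks down. The strategy I would follow is an approximation argument. Fix $T\in\overline{cv_N}$ and a sequence $T_n\in cv_N$ with $T_n\to T$ in the axes topology. For any basepoint $T_0\in cv_N$ there exist $F_N$-equivariant Lipschitz maps $f_n\colon T_0\to T_n$, and these can be chosen to satisfy a uniform bounded backtracking estimate in terms of $\mathrm{Lip}(f_n)$; this lets one bound $\bigl|\|g\|_{T_n}-\|g\|_{T_m}\bigr|$ in terms of the length of certain "cancellation" subwords detectable by cylinders in $T_0$. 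Combined with compactness of $\mathbb{P}Curr_N$ and the vanishing of currents on arbitrarily thin cylinders in the limit, this upgrades the pointwise convergence $\|g\|_{T_n}\to\|g\|_T$ (Culler--Morgan) to uniform convergence of the functionals $\eta\mapsto\langle T_n,\eta\rangle$ on compact subsets of $Curr_N$. The common limit defines $\langle T,\eta\rangle$, which is continuous and inherits $\mathbb{R}_+$-homogeneity, $\mathbb{R}_+$-linearity, $\mathrm{Out}(F_N)$-invariance, and the normalization $\langle T,\eta_g\rangle=\|g\|_T$ from the approximants. The delicate point throughout is obtaining bounds that are uniform in $n$ and in $\eta$ over a weak-$\ast$ compact set; this is where the bounded backtracking machinery developed in \cite{KL09} is essential.
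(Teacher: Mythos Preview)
The paper does not prove this theorem; it is stated as background and attributed directly to Kapovich--Lustig \cite[Theorem A]{KL09}, with no proof given. There is therefore nothing in the paper to compare your proposal against.

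That said, your sketch does follow the broad outline of the Kapovich--Lustig argument: uniqueness via density of rational currents, the cylinder formula on $cv_N$, and then a continuous extension to $\overline{cv_N}$. The uniqueness paragraph and the construction on $cv_N$ are standard and essentially correct. The extension step, however, is where all the content lies, and your description is more of a wish than a proof: you assert that bounded backtracking estimates upgrade pointwise convergence of translation lengths to uniform convergence of the functionals $\eta\mapsto\langle T_n,\eta\rangle$ on compacta, but you do not actually indicate how to obtain a bound on $|\langle T_n,\eta\rangle-\langle T_m,\eta\rangle|$ that is uniform in $\eta$. The actual Kapovich--Lustig proof does not proceed quite this way; rather, it defines $\langle T,\eta\rangle$ directly for arbitrary $T\in\overline{cv_N}$ via an intersection count with a fundamental domain in a fixed Cayley tree (using the Levitt--Lustig map $Q$), and then proves continuity by a separate argument. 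Your approximation scheme is plausible in spirit but, as written, the key uniformity is asserted rather than established.
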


Kapovich and Lustig give the following characterization of zero intersection. 

\begin{theo} \label{currents-duality} (Kapovich-Lustig \cite[Theorem 1.1]{KL10})
For all $T\in\overline{cv_N}$ and all $\eta\in Curr_N$, we have $\langle T,\eta\rangle=0$ if and only if $\text{Supp}(\eta)\subseteq L(T)$. In particular, for all $T\in cv_N$ and all $\eta\in Curr_N$, we have $\langle T,\eta\rangle\neq 0$, while for all $T\in\partial cv_N$, there exists $\eta\in Curr_N$ such that $\langle T,\eta\rangle=0$.
\end{theo}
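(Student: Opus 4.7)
The plan is to prove the two directions of the equivalence separately, reducing from general currents to rational currents $\eta_g$ via a density and continuity argument, and then to leverage the two statements of the ``in particular'' clause as corollaries.

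\textbf{Forward direction: $\operatorname{Supp}(\eta)\subseteq L(T)\Rightarrow\langle T,\eta\rangle=0$.} First I would approximate $\eta$ in the weak-$\ast$ topology by finite nonnegative combinations of rational currents $\eta_{g_n}$ whose supports lie in a prescribed neighborhood $V$ of $\operatorname{Supp}(\eta)\subseteq L(T)$. The heart of the matter is then the following approximation lemma: for every $\epsilon>0$ there is a neighborhood $V_\epsilon$ of $L(T)$ in $\partial^2F_N$ such that whenever $g\in F_N$ is not a proper power and $(g^{-\infty},g^{+\infty})\in V_\epsilon$, one has $\Vert g\Vert_T<\epsilon$. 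To establish this, I would unravel the definition $L(T)=\bigcap_{\epsilon>0}L_\epsilon(T)$ together with the embedding $i\colon\overline{cv_N}\hookrightarrow\mathbb{R}^{F_N}$: points close to $L(T)$ are accumulated by pairs $(h^{-\infty},h^{+\infty})$ with $\Vert h\Vert_T$ small, and a Bestvina-Feighn type tool (using the very small structure of $T$ and behavior of axes of nearby hyperbolic elements) lets me transfer smallness of $\Vert h\Vert_T$ to smallness of $\Vert g\Vert_T$. Since $\langle T,\eta_{g_n}\rangle=\Vert g_n\Vert_T$ for rational currents, continuity of the intersection form (Theorem \ref{intersection-form}) gives $\langle T,\eta\rangle=0$.

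\textbf{Reverse direction.} Suppose $\langle T,\eta\rangle=0$ but there exists $(X,Y)\in\operatorname{Supp}(\eta)\setminus L(T)$. Then some $\epsilon>0$ and some open neighborhood $U$ of $(X,Y)$ satisfy $U\cap L_\epsilon(T)=\emptyset$, so any rational current $\eta_g$ with $(g^{-\infty},g^{+\infty})\in U$ must satisfy $\Vert g\Vert_T\ge\epsilon$. By approximating the restriction of $\eta$ to $U$ by positive combinations of such rational currents and passing to the limit, the continuity and $\mathbb{R}_+$-linearity properties of the intersection form give $\langle T,\eta\rangle\ge\epsilon\cdot\eta(U)>0$, since $(X,Y)\in\operatorname{Supp}(\eta)$ forces $\eta(U)>0$. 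This contradiction yields $\operatorname{Supp}(\eta)\subseteq L(T)$.

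\textbf{The ``in particular'' clause.} For $T\in cv_N$ the action is free, minimal and simplicial, so the systole $\inf_{g\ne e}\Vert g\Vert_T$ is strictly positive; hence $L_\epsilon(T)=\emptyset$ for small enough $\epsilon$ and thus $L(T)=\emptyset$. Since any current has nonempty support, the equivalence just proved gives $\langle T,\eta\rangle\ne 0$. For $T\in\partial cv_N$, Bestvina--Feighn's characterization gives a very small $F_N$-tree which is not free and simplicial; in particular one can produce a sequence $(g_n)$ in $F_N$ with $\Vert g_n\Vert_T\to 0$, rescale $\eta_{g_n}$ to unit mass on a fixed compact subset of $\partial^2F_N$, and extract a weak-$\ast$ limit $\eta\in Curr_N$ supported on $L(T)$; the equivalence then yields $\langle T,\eta\rangle=0$.

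\textbf{Main obstacle.} The delicate step is the approximation lemma driving the forward direction: converting a topological statement (being near $L(T)$ in $\partial^2F_N$) into a metric statement (small translation length in $T$). This requires controlling the possibly pathological behavior of axes of hyperbolic elements in very small trees, and this is where most of Kapovich--Lustig's work in \cite{KL10} is concentrated.
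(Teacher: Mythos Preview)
The paper does not give its own proof of this statement: Theorem~\ref{currents-duality} is quoted verbatim from Kapovich--Lustig \cite[Theorem 1.1]{KL10} as a black box, with no argument supplied. So there is nothing in the paper to compare your proposal against.

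That said, your sketch is broadly in the spirit of how Kapovich and Lustig proceed in \cite{KL10}, namely by reducing to rational currents via density and continuity of the intersection form. A couple of places where your outline is vague deserve attention if you ever flesh this out. In the forward direction, the ``approximation lemma'' you describe is not quite how the argument goes: one does not directly convert proximity to $L(T)$ in $\partial^2F_N$ into smallness of $\|g\|_T$ for a single $g$; rather, one works with the explicit integral formula for $\langle T,\eta\rangle$ in terms of cylinder functions on $\partial^2F_N$ and shows that the contribution vanishes when the support is contained in $L(T)$. In the ``in particular'' clause for $T\in\partial cv_N$, your argument is more elaborate than necessary in the simplicial case: if $T$ has a nontrivial point stabilizer then any nontrivial $g$ fixing a point has $\|g\|_T=0$, so $\eta_g$ itself already witnesses $\langle T,\eta_g\rangle=0$ without any limiting procedure; the limiting argument is only needed when $T$ is free with dense orbits.
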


A projective current $[\eta]\in \mathbb{P}Curr_N$ is \emph{ergodic} if for every $F_N$-invariant measurable subset $S\subseteq\partial^2 F_N$, we either have $\eta(S)=0$ or $\eta(\partial^2 F_N\smallsetminus S)=0$. We denote by $\text{Erg}_N$ the space of ergodic currents, which coincides with the set of extreme points of the compact convex space $\mathbb{P}Curr_N$. Given an $F_N$-tree $T$, we denote by $\text{Dual}(T)$ the space of all projective currents $[\eta]\in\mathbb{P}Curr_N$ such that $\langle T,\eta\rangle=0$ (this makes sense since nullity of $\langle T,\eta\rangle$ only depends on the projective class of $\eta$). For all $T\in\overline{cv_N}$, the space $\text{Dual}(T)$ is a compact convex subspace of $\mathbb{P}Curr_N$. Equivariance of the intersection form implies that for all $T\in\overline{cv_N}$ and $\Phi\in\text{Out}(F_N)$, we have $\Phi \text{Dual}(T)=\text{Dual}(\Phi T)$. The extreme points of $\text{Dual}(T)$ are the ergodic currents which are dual to $T$. Denoting by $\text{Erg}(T)$ the set of such ergodic currents, for all $\Phi\in\text{Out}(F_N)$, we have $\text{Erg}(\Phi T)=\Phi \text{Erg}(T)$. Coulbois and Hilion have shown that $\text{Dual}(T)$ is finite-dimensional as soon as the $F_N$-action on $T$ is free and has dense orbits \cite{CH13}.

\begin{theo} \label{dimension-currents} (Coulbois-Hilion \cite[Theorem 1.1]{CH13})
Let $T$ be an $\mathbb{R}$-tree with a free, minimal action of $F_N$ by isometries with dense orbits. Then $\text{Dual}(T)$ contains at most $3N-5$ projective classes of ergodic currents.
\end{theo}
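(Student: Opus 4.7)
The approach I would take is to pass from currents dual to $T$ to invariant transverse measures on a combinatorial model of $T$, and then bound the number of ergodic such measures by a Rips-machine style complexity argument.

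First, I would encode the $F_N$-action on $T$ by a system of partial isometries on a compact $\mathbb{R}$-tree. Since $T$ is free with dense orbits, standard constructions (compact heart of Coulbois-Hilion-Lustig, or the Gaboriau-Levitt-Paulin limit groups of partial isometries) produce a finite subtree $K\subset T$ together with a finite family $\mathcal{S}=(\phi_1,\dots,\phi_k)$ of partial isometries of $K$ such that the pseudo-group generated by $\mathcal{S}$ has dense orbits in $K$ and faithfully records the $F_N$-action. The associated band complex $\Sigma(K,\mathcal{S})$ carries a natural foliation whose leaf space is the dual lamination $L(T)\subset\partial^2F_N$.

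Second, I would establish the correspondence between $\text{Dual}(T)$ and the space $\mathcal{M}(K,\mathcal{S})$ of finite Borel measures on $K$ invariant under $\mathcal{S}$. The support-containment criterion of Theorem \ref{currents-duality} guarantees that any $\eta\in\text{Dual}(T)$ is concentrated on $L(T)$; integrating $\eta$ against a transverse arc to the foliation in $\Sigma(K,\mathcal{S})$ produces an $\mathcal{S}$-invariant measure on $K$, and this procedure can be inverted by spreading a transverse measure along the leaves using the $F_N$-action. Checking that this map is an $\mathbb{R}_+$-linear, $\text{Out}(F_N)$-equivariant bijection sending ergodic currents to ergodic invariant measures is the technical core; here Kapovich-Lustig's formula $\langle T,\eta_g\rangle=\|g\|_T$ pins down the normalization.

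Third, I would invoke the bound on the number of ergodic invariant measures of a minimal system of partial isometries on a finite tree. Running the Rips machine decomposes $(K,\mathcal{S})$ into finitely many \emph{surface type} and \emph{thin type} components; each surface component contributes at most a Katok-Veech bound ($\le g+n-1$ ergodic measures in terms of its genus and boundary) and each thin (Levitt) component is uniquely ergodic. Summing these contributions over all components, and using that for a free action of $F_N$ the number of orbits of branch points of $T$ is bounded in terms of $N$ and that the total combinatorial complexity (edges of the quotient graph of groups, or equivalently the rank of $\pi_1$ of the band complex) is $N$, one extracts a uniform bound linear in $N$.

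The main obstacle will be the final Step 3 bookkeeping that gives exactly $3N-5$ rather than a looser bound like $3N-3$: one must use that the action is \emph{free} (which kills potential contributions coming from non-trivial vertex stabilizers) and that $T$ has no trivial arcs, and then carefully account for branch points, orbits of singular leaves, and the codimension of $\mathcal{M}(K,\mathcal{S})$ in the cone of all transverse measures. I expect Step 2, the identification of dual currents with invariant measures on the heart, to be the most delicate structural input, while Step 3 is essentially a combinatorial/Euler characteristic count once the framework is in place.
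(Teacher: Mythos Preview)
The paper does not prove this theorem at all: it is quoted as \cite[Theorem 1.1]{CH13} (Coulbois--Hilion) and used as a black box, so there is no ``paper's own proof'' to compare your proposal against.

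For what it is worth, your Steps~1 and~2 are exactly the framework of Coulbois--Hilion: they build the compact heart $K_A\subset T$ with its system of partial isometries and identify $\text{Dual}(T)$ with the cone of transverse invariant measures on the associated band complex (equivalently, invariant measures on a subshift encoding the lamination $L(T)$). Your Step~3, however, diverges from their argument. Coulbois--Hilion do not run the Rips machine and sum contributions over surface/thin pieces; instead they perform an unfolding (Rauzy--Veech--type) induction on the system of isometries and bound the dimension of the cone of invariant measures directly in terms of the index of $T$, invoking the Gaboriau--Levitt inequality to obtain $3N-5$. Your assertion that ``each thin (Levitt) component is uniquely ergodic'' is not correct in general---Levitt-type trees can carry several ergodic currents---so your proposed bookkeeping in Step~3 would not go through as stated; this is precisely why Coulbois--Hilion argue via the index rather than by a component-by-component count.
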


\subsection{The Lipschitz metric on outer space}\label{sec-metric}

Outer space is equipped with an asymmetric metric: the distance $d(T,T')$ between two trees $T,T'\in CV_N$ is defined as the logarithm of the infimal Lipschitz constant $\text{Lip}(T,T')$ of an $F_N$-equivariant map from the covolume $1$ representative of $T$ to the covolume $1$ representative of $T'$, see \cite{FM11}. One can symmetrize the metric on $CV_N$ by setting $d_{sym}(T,T'):=d(T,T')+d(T',T)$. The Lipschitz metric on $CV_N$ can be interpreted in terms of the intersection pairing between trees and currents. Given a subset $X\subseteq \mathbb{P}Curr_N$, we let

\begin{displaymath}
\Lambda_{X}(T,T'):=\sup_{[\eta]\in X}\frac{\langle T',\eta\rangle}{\langle T,\eta\rangle}.
\end{displaymath}

\begin{theo} (White, see \cite[Proposition 2.3]{AK11}, \cite[Proposition 3.15]{FM11} or \cite[Lemma 4.1]{Ham12-2})\label{White}
For all $T,T'\in CV_N$, we have $\text{Lip}(T,T')=\Lambda_{\mathbb{P}Curr_N}(T,T')$.
\end{theo}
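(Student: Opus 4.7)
The plan is to establish both inequalities separately.

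One direction, $\Lambda_{\mathbb{P}Curr_N}(T,T')\le \text{Lip}(T,T')$, is essentially formal. If $f:T\to T'$ is $F_N$-equivariant and $L$-Lipschitz, then for every $g\in F_N\smallsetminus\{e\}$ and every $x\in T$ one has $||g||_{T'}\le d_{T'}(f(x),f(gx))\le L\cdot d_T(x,gx)$, and taking the infimum over $x$ on the axis of $g$ yields $||g||_{T'}\le L||g||_T$. By Theorem \ref{intersection-form} this translates into $\langle T',\eta_g\rangle\le L\langle T,\eta_g\rangle$ for every rational current $\eta_g$. Since rational currents are dense in $Curr_N$ (Kapovich \cite{Kap06}) and the intersection form is continuous and $\mathbb{R}_+$-linear in the second variable, the inequality extends to every $\eta\in Curr_N$, hence passes to the projective quotient, and letting $L\searrow\text{Lip}(T,T')$ finishes this direction.

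For the nontrivial direction $\text{Lip}(T,T')\le \Lambda_{\mathbb{P}Curr_N}(T,T')$, the plan is to exhibit a single rational current realizing equality, i.e.~a \emph{candidate} $g\in F_N$ with $||g||_{T'}/||g||_T=\text{Lip}(T,T')$. First, an Arzelà--Ascoli style compactness argument, using that $T/F_N$ and $T'/F_N$ are finite graphs of covolume one, produces an $F_N$-equivariant optimal map $f:T\to T'$ whose Lipschitz constant equals $\text{Lip}(T,T')$. Consider its \emph{tension subgraph} $\Delta(f)\subseteq T$, the $F_N$-invariant subgraph spanned by those edges on which $f$ stretches by the maximal factor. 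The central observation is that at each vertex of the finite quotient $\Delta(f)/F_N$ the maximally stretched outgoing directions partition into at least two \emph{gates}, for otherwise a small equivariant perturbation of $f$ near the orbit of that vertex would strictly reduce its Lipschitz constant, contradicting optimality.

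Given this gate structure, a combinatorial loop-extraction in the finite graph $\Delta(f)/F_N$ produces a closed path that at each visited vertex uses two distinct gates; such a path projects to $T/F_N$ as one of the three standard combinatorial types -- an embedded simple loop, a figure-eight, or a barbell. The conjugacy class of $F_N$ represented by this loop is the desired candidate $g$: since every edge traversed is stretched by exactly $\text{Lip}(T,T')$, the axis of $g$ is mapped by $f$ with constant local stretch $\text{Lip}(T,T')$, and therefore $||g||_{T'}=\text{Lip}(T,T')\cdot ||g||_T$, so that $\langle T',\eta_g\rangle/\langle T,\eta_g\rangle=\text{Lip}(T,T')$, giving the required lower bound on $\Lambda_{\mathbb{P}Curr_N}(T,T')$. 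The main obstacle is the gate-analysis step: the perturbation argument ensuring at least two gates at every tense vertex is the technical heart of White's theorem, and the loop extraction relies crucially on the finiteness of $\Delta(f)/F_N$. The details of both steps are carried out in \cite{AK11, FM11, Ham12-2}.
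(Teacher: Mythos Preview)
Your sketch is correct and is precisely the standard argument for White's theorem as carried out in the references the paper cites; note that the paper itself does not supply a proof of this statement but simply quotes it, so there is nothing further to compare. One small refinement: in the hard direction, the reason the candidate $g$ satisfies $\|g\|_{T'}=\text{Lip}(T,T')\cdot\|g\|_T$ is not just that every traversed edge is maximally stretched, but that the two-gate condition guarantees $f$ restricted to the axis of $g$ is an immersion (no folding at vertices), so the image is the axis of $g$ in $T'$ --- you implicitly use this, but it is worth making explicit.
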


Algom-Kfir has shown in \cite[Proposition 4.5]{AK12} that the equality stated in Theorem \ref{White} also holds when $T'\in\overline{CV_N}$ (she actually states her result when $T'$ belongs to the metric completion of $CV_N$). Notice that equality between $\text{Lip}(T,T')$ and $\Lambda_{\mathbb{P}Curr_N}(T,T')$ does not depend on the choice of a representative of the projective classes of $T$ and $T'$.

\subsection{Arational trees} \label{sec-arational}

Let $H\le F_N$ be a finitely generated subgroup of $F_N$. The boundary $\partial H$ naturally embeds in $\partial F_N$. We say that $H$ \emph{carries a leaf} of an algebraic lamination $L$ if $L\cap\partial^2 H\neq\emptyset$. A tree $T\in\partial CV_N$ is \emph{arational} if no leaf of $L(T)$ is carried by a proper free factor of $F_N$. We denote by $\widetilde{\mathcal{AT}}$ the subspace of $\partial CV_N$ consisting of arational trees. Arational trees have dense orbits, and Reynolds has shown that arational trees are either free (and indecomposable) or dual to an arational measured lamination on a surface with one boundary component \cite[Theorem 1.1]{Rey12}. We denote by $\widetilde{\mathcal{FI}}$ (standing for "free indecomposable") the subspace of $\widetilde{\mathcal{AT}}$ consisting of free actions of $F_N$. Let $\sim$ be the equivalence relation on $\widetilde{\mathcal{AT}}$ defined by $T\sim T'$ if $L(T)=L(T')$. Two trees $T,T'\in\widetilde{\mathcal{AT}}$ are equivalent if and only if they belong to the same \emph{simplex} of length measures in $\partial CV_N$ (see \cite[Section 5]{Gui98} for definitions), i.e. they have the same underlying topological tree, see \cite{CHL07}. Let $\mathcal{AT}:=\widetilde{\mathcal{AT}}/{\sim}$ and $\mathcal{FI}:=\widetilde{\mathcal{FI}}/{\sim}$. Classes of the relation $\sim$ are compact subspaces of $\partial CV_N$ \cite[Lemma 7.1]{BR13}. By definition of the relation $\sim$, and thanks to Theorem \ref{currents-duality}, it makes sense to define $\text{Dual}(T)$ and $\text{Erg}(T)$ for $T\in\mathcal{FI}$. Theorem \ref{dimension-currents} therefore implies the following fact. 

\begin{prop}\label{dual-arational}
For all $T\in\mathcal{FI}$, the set $\text{Erg}(T)$ is finite, of cardinality at most $3N-5$.
\qed
\end{prop}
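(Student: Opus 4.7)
The proposition is essentially a direct application of Theorem \ref{dimension-currents}, and the plan is to check that its hypotheses are satisfied by any representative of a class in $\mathcal{FI}$.

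First, I would pick an arbitrary representative $\widetilde{T}\in\widetilde{\mathcal{FI}}$ of the class $T\in\mathcal{FI}$. By the definition of $\widetilde{\mathcal{FI}}$, the action of $F_N$ on $\widetilde{T}$ is free. Since $\widetilde{T}\in\partial CV_N$, it is a minimal very small $F_N$-tree by the Bestvina--Feighn / Cohen--Lustig characterization recalled in Section~1.1, so in particular the action is minimal. Finally, Reynolds' theorem (cited in Section~\ref{sec-arational}) ensures that arational trees have dense orbits. Thus $\widetilde{T}$ satisfies the three hypotheses of Theorem \ref{dimension-currents}, and we obtain
\begin{displaymath}
\#\,\mathrm{Erg}(\widetilde{T})\le 3N-5.
\end{displaymath}

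Second, I would justify that this count only depends on the $\sim$-class and thus descends to $T\in\mathcal{FI}$. This is already remarked in the excerpt right before the statement, but the key point is the following: by Theorem \ref{currents-duality}, $[\eta]\in\mathrm{Dual}(\widetilde{T})$ if and only if $\mathrm{Supp}(\eta)\subseteq L(\widetilde{T})$; and by the very definition of the relation $\sim$, $L(\widetilde{T})$ is constant on the simplex of $T$. Hence $\mathrm{Dual}(\widetilde{T})$ depends only on $T\in\mathcal{FI}$, and so does its set of extreme points $\mathrm{Erg}(T):=\mathrm{Erg}(\widetilde{T})$. Combining both steps yields the desired bound.

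There is essentially no obstacle to overcome: the work has been done by Coulbois--Hilion in Theorem \ref{dimension-currents}, by Reynolds in establishing denseness of orbits for arational trees, and by Kapovich--Lustig in Theorem \ref{currents-duality}. The only thing to verify is that the definitions of $\widetilde{\mathcal{FI}}$, of $\partial CV_N$, and of arationality together package precisely the three hypotheses of Theorem \ref{dimension-currents}, which is why the proof is short enough to end with a \emph{qed} immediately after the statement.
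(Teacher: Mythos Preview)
Your proposal is correct and follows exactly the argument the paper gives in the paragraph preceding the statement: well-definedness of $\mathrm{Dual}(T)$ and $\mathrm{Erg}(T)$ on $\mathcal{FI}$ via Theorem~\ref{currents-duality} and the definition of $\sim$, together with the observation that a representative in $\widetilde{\mathcal{FI}}$ is free, minimal and has dense orbits, so Theorem~\ref{dimension-currents} applies. This is why the paper simply writes \emph{qed} after the statement.
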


The following unique duality statement is a version of a theorem due Bestvina and Reynolds \cite[Theorem 4.4]{BR13} and Hamenstädt \cite[Corollary 10.6]{Ham12}.  

\begin{theo}\label{arational-duality}
Let $T_1\in\widetilde{\mathcal{FI}}$, and let $\eta\in Curr_N$ be such that $\langle T_1,\eta\rangle=0$. If $T_2\in\partial CV_N$ also satisfies $\langle T_2,\eta\rangle=0$, then $T_2\in\widetilde{\mathcal{FI}}$ and $T_1\sim T_2$.
\end{theo}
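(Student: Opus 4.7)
The plan is to transfer the structural properties of $T_1$ to $T_2$ through the common sublamination carried by $\eta$, and then match dual laminations. Write $\mathcal{L}:=\mathrm{Supp}(\eta)$. By Theorem \ref{currents-duality}, the hypotheses $\langle T_i,\eta\rangle=0$ for $i=1,2$ translate into $\mathcal{L}\subseteq L(T_1)\cap L(T_2)$, and $\mathcal{L}$ is nonempty since $\eta\ne 0$ (otherwise $\langle T_1,\eta\rangle=0$ would be automatic and $T_1\in cv_N$ would already be free arational only by convention). Since $T_1\in\widetilde{\mathcal{FI}}$, no leaf of $L(T_1)$ is carried by a proper free factor; in particular no leaf of the sublamination $\mathcal{L}$ is carried by a proper free factor.

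I would first show $T_2\in\widetilde{\mathcal{FI}}$ by a case analysis on the Levitt/Grushko decomposition of $T_2$. If $T_2$ had a simplicial piece, the associated free splitting of $F_N$ would exhibit a proper free factor $A$ whose axis structure is visible in $L(T_2)$: the leaves of $L(T_2)$ coming from hyperbolic elements entirely supported on the non-simplicial vertex actions would lie in $\partial^2 A$ for some conjugate of $A$, and one could extract from $\mathcal{L}\subseteq L(T_2)$ a sublamination carried by a proper free factor. If instead $T_2$ has dense orbits but is not free arational, then by Reynolds \cite{Rey12} some proper free factor $A$ acts with a nontrivial minimal subtree (or has a global fixed point) in $T_2$; Coulbois--Hilion's analysis then produces a nonzero current dual to $T_2$ supported in $\partial^2 A$, and again a subcurrent of $\eta$ --- or a related current forced to have support inside $\mathcal{L}$ --- would have to be carried by a proper free factor. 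In every case, this contradicts Step 1, so $T_2\in\widetilde{\mathcal{FI}}$. This is the technical heart of the proof and where I expect the main obstacle to lie: going from the purely local information ``$\mathcal{L}\subseteq L(T_2)$'' to the global statement ``no leaf of $L(T_2)$ is in a proper free factor'' really requires Reynolds' indecomposability theorem and the Bestvina--Reynolds / Hament\"adt template for arational trees.

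Once $T_2\in\widetilde{\mathcal{FI}}$ is established, both $L(T_1)$ and $L(T_2)$ are dual laminations of free arational trees and both contain the common nonempty sublamination $\mathcal{L}$. The final step would invoke the minimality property of dual laminations of free arational trees (implicit in work of Coulbois--Hilion--Lustig and Reynolds): the dual lamination of a free arational tree has a unique minimal non-diagonal component, which coincides with the support of any nonzero dual current, and the full dual lamination is determined by this component via the diagonal-closure construction. Applied to $\eta$, this forces the minimal components of $L(T_1)$ and $L(T_2)$ to agree with the minimal component of $\mathcal{L}$, hence $L(T_1)=L(T_2)$, which is exactly $T_1\sim T_2$.
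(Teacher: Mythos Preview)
Your overall strategy --- pass to $\mathcal{L}=\mathrm{Supp}(\eta)\subseteq L(T_1)\cap L(T_2)$ via Theorem~\ref{currents-duality}, then exploit minimality properties of the dual lamination of a free arational tree --- is the right one and matches the paper. But the execution of your middle step has a genuine gap, and the paper's route avoids it entirely.

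The gap is in your case analysis on $T_2$. When $T_2$ has a simplicial part, the associated splitting is only a very small splitting: edge stabilizers can be nontrivial cyclic, so the vertex groups need not be free factors at all. Thus ``extract from $\mathcal{L}$ a sublamination carried by a proper free factor'' does not follow. The paper handles the non-dense-orbits case with a different (stronger) input: since $T_1$ is free and indecomposable, Reynolds \cite{Rey11} shows that no leaf of $L(T_1)$ is carried by any \emph{infinite index} subgroup of $F_N$, not merely by any proper free factor. That is exactly what is needed, because vertex groups of the canonical graph-of-actions decomposition of $T_2$ have infinite index even when they are not free factors. Your arationality hypothesis on $T_1$ alone (no leaf in a proper free factor) is too weak here.

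There is also a structural difference worth noting. The paper does not first prove $T_2\in\widetilde{\mathcal{FI}}$ and then match laminations. Instead it pins down $\mathrm{Supp}(\eta)$ exactly: ruling out periodic leaves (freeness of $T_1$) and isolated nonperiodic leaves (Radon property of currents) forces $\mathrm{Supp}(\eta)\subseteq L'(T_1)$, and minimality of $L'(T_1)$ \cite[Proposition~4.2]{BR13} gives $\mathrm{Supp}(\eta)=L'(T_1)$. Then $L'(T_1)\subseteq L(T_2)$; once $T_2$ has dense orbits, $L(T_2)$ is diagonally closed, so $L(T_1)\subseteq L(T_2)$. Finally \cite[Proposition~3.1]{BR13} (using indecomposability of $T_1$) yields $L(T_1)=L(T_2)$ and $T_2\in\widetilde{\mathcal{FI}}$ in one stroke. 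This is cleaner than your two-stage plan and sidesteps the need for any fine structural analysis of $T_2$ beyond the dense-orbits dichotomy.
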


\begin{proof}
By Theorem \ref{currents-duality}, as $\langle T_1,\eta\rangle=0$, we have $\text{Supp}(\eta)\subseteq L(T_1)$. If $\text{Supp}(\eta)$ contained a periodic leaf (whose $F_N$-translates form the support of a rational current $\eta_g$ for some $g\in F_N$), then we would have $||g||_{T_1}=0$, contradicting freeness of the $F_N$-action on $T_1$. In addition, the support of a current cannot contain isolated nonperiodic leaves, since translates of such leaves have accumulation points, and currents are Radon measures. This implies that $\text{Supp}(\eta)$ does not contain any isolated leaf. Therefore $\text{Supp}(\eta)$ is contained in the derived lamination $L'(T_1)$ (i.e. the sublamination of $L(T_1)$ consisting of non-isolated leaves). Since $T_1\in\widetilde{\mathcal{FI}}$, by \cite[Proposition 4.2]{BR13}, the lamination $L'(T_1)$ is minimal (i.e. it does not contain any proper sublamination), so $\text{Supp}(\eta)=L'(T_1)$. Since we also have $\langle T_2,\eta\rangle=0$, Theorem \ref{currents-duality} implies that $L'(T_1)\subseteq L(T_2)$. 

If $T_2$ does not have dense orbits, then all leaves of $L(T_2)$ are carried by a vertex group of the canonical decomposition of $T_2$ as a graph of actions with dense orbits (see \cite{KL09}). Such vertex groups have infinite index in $F_N$. However, as $T_1$ is free and indecomposable, a theorem of Reynolds \cite{Rey11} shows that no leaf of $L(T_1)$ is carried by an infinite index subgroup of $F_N$. This yields a contradiction.

Therefore, the tree $T_2$ has dense orbits, and it follows from \cite[Section 8]{CHL08-2} that $L(T_2)$ is diagonally closed. By \cite[Proposition 4.2]{BR13}, the lamination $L(T_1)$ is the diagonal closure of $L'(T_1)$. Hence we have $L(T_1)\subseteq L(T_2)$. Since $T_1$ is indecomposable, this implies that $L(T_1)=L(T_2)$ by \cite[Proposition 3.1]{BR13}, and $T_2\in\widetilde{\mathcal{FI}}$.   
\end{proof}

Following Hamenstädt \cite[Section 3]{Ham12-2}, we say that a pair $(\eta,\eta')\in Curr_N^2$ is \emph{positive} if for all $T\in\overline{cv_N}$, we have $\langle T,\eta+\eta'\rangle>0$ (this again makes sense when $[\eta],[\eta']\in\mathbb{P}Curr_N$). Denote by $\Delta$ the diagonal in $\mathcal{FI}\times\mathcal{FI}$. As a consequence of Proposition \ref{dual-arational} and Theorem \ref{arational-duality}, we get the following fact.

\begin{cor}\label{arational-current}
For all pairs $(T,T')\in\mathcal{FI}\times\mathcal{FI}\smallsetminus\Delta$, and all $(\eta,\eta')\in\text{Dual}(T)\times\text{Dual}(T')$, the pair $(\eta,\eta')$ is positive. In particular, the set $\text{Erg}(T)\times\text{Erg}(T')$ is a finite set of positive pairs of projective currents.
\qed
\end{cor}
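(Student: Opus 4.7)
The plan is to prove the first assertion by contradiction, using Theorem \ref{arational-duality} as the key input, and then deduce the ``in particular'' statement immediately from Proposition \ref{dual-arational}.

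Suppose the pair $(\eta,\eta')$ fails to be positive, so that there exists $S\in\overline{cv_N}$ with $\langle S,\eta+\eta'\rangle=0$. Since the intersection form is $\mathbb{R}_+$-valued and linear in the current, this forces $\langle S,\eta\rangle=0$ and $\langle S,\eta'\rangle=0$ simultaneously. I would first observe that $S$ cannot project to a tree in $CV_N$: by Theorem \ref{currents-duality}, every tree in $cv_N$ has nonzero intersection with every nonzero current, so the projective class $[S]$ must lie in $\partial CV_N$.

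The core step is then to apply Theorem \ref{arational-duality} twice. Pick representatives $T_1\in\widetilde{\mathcal{FI}}$ and $T_1'\in\widetilde{\mathcal{FI}}$ of the classes $T$ and $T'$ respectively; since $\text{Dual}$ depends only on the $\sim$-class (which is exactly why it is well-defined on $\mathcal{FI}$), we have $\langle T_1,\eta\rangle=0$ and $\langle T_1',\eta'\rangle=0$. Applying Theorem \ref{arational-duality} with $T_1$ and the class of $S$ in $\partial CV_N$, we conclude that $[S]\in\widetilde{\mathcal{FI}}$ and $[S]\sim T_1$. Applying it again with $T_1'$ and the same $[S]$, we get $[S]\sim T_1'$. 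By transitivity of $\sim$, this yields $T_1\sim T_1'$, i.e.\ $T=T'$ in $\mathcal{FI}$, contradicting the assumption $(T,T')\notin\Delta$.

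For the second assertion, Proposition \ref{dual-arational} gives that $\text{Erg}(T)$ and $\text{Erg}(T')$ are finite (each of cardinality at most $3N-5$), so the product $\text{Erg}(T)\times\text{Erg}(T')$ is finite, and every pair it contains is a pair in $\text{Dual}(T)\times\text{Dual}(T')$, hence positive by the first part. I do not anticipate a serious obstacle here: the only subtlety is checking that $\text{Dual}$ makes sense on equivalence classes (already granted in the paragraph preceding Proposition \ref{dual-arational}) and that Theorem \ref{arational-duality} applies to the projectivization of $S$ rather than $S$ itself, which is handled by the preliminary observation that $[S]\in\partial CV_N$.
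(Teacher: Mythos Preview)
Your proof is correct and is exactly the argument the paper has in mind: the corollary is stated with a \qed\ as an immediate consequence of Proposition~\ref{dual-arational} and Theorem~\ref{arational-duality}, and you have simply spelled out the details of that deduction. There is nothing to add.
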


\section{Random walks in $\text{Out}(F_N)$} \label{sec-random}

In this section, all topological spaces are equipped with their Borel $\sigma$-algebra. Let $G$ be a countable group, and $\mu$ a probability measure on $G$. We denote by $gr(\mu)$ the subgroup of $G$ generated by the support of the measure $\mu$. The \emph{random walk} on $G$ with respect to the measure $\mu$ is the Markov chain on $G$ with transition probabilities $p(x,y):=\mu(x^{-1}y)$. The \emph{step space} for the random walk is the product probability space $(G^{\mathbb{N}},\mu^{\otimes\mathbb{N}})$. The position of the random walk at time $n$ is given from its position $g_0=e$ at time $0$ by multiplying on the right by a sequence of independent $\mu$-distributed increments $s_i$, i.e. $g_n=s_1\dots s_n$. So the distribution of the location of the random walk at time $n$ is given by the $n$-fold convolution of the measure $\mu$, which we denote by $\mu^{\ast n}$. We equip the \emph{path space} $G^{\mathbb{N}}$ with the $\sigma$-algebra $\mathcal{A}$ generated by the cylinders $\{\textbf{g}\in G^{\mathbb{N}}|g_i=g\}$ for all $i\in\mathbb{N}$ and all $g\in G$. We denote by $\mathbb{P}$ the probability measure on the path space $G^{\mathbb{N}}$ induced by the map $(s_1,s_2,\dots)\mapsto (g_1,g_2,\dots)$. 

Let $\mu$ be a probability measure on $\text{Out}(F_N)$. We aim at understanding the asymptotic behaviour of the random walk on $\text{Out}(F_N)$ with respect to the measure $\mu$. A subgroup $H\subseteq\text{Out}(F_N)$ is \emph{nonelementary} if the $H$-orbits of all proper free factors of $F_N$, of all projective arational trees, and of all conjugacy classes of elements of $F_N$, are infinite. Arguing as in \cite{Hor14-3} (see also \cite{HM09}), one can show that this is equivalent to $H$ containing two independent atoroidal fully irreducible elements (we will not use this fact in the sequel). The following result is a consequence of Propositions \ref{existence-stationary-ff}, \ref{convergence-cvn} and \ref{unique}.

\begin{theo} \label{convergence-drift}
Let $\mu$ be a probability measure on $\text{Out}(F_N)$, whose support generates a nonelementary subgroup of $\text{Out}(F_N)$. For $\mathbb{P}$-a.e. sample path $\mathbf{g}:=(g_n)_{n\in\mathbb{N}}$ of the random walk on $(\text{Out}(F_N),\mu)$, there exists a simplex $\xi(\mathbf{g})\in\mathcal{FI}$ such that for all $T_0\in CV_N$, the sequence $(g_n T_0)_{n\in\mathbb{N}}$ converges to $\xi(\mathbf{g})$. The hitting measure is nonatomic, and it is the only $\mu$-stationary measure on $\mathcal{FI}$.
\end{theo}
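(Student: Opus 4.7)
The plan is to mirror the strategy of Kaimanovich and Masur \cite{KM96}, translating their proof for mapping class groups to the $\text{Out}(F_N)$ setting by using free arational trees and their dual currents in place of uniquely ergodic measured foliations. The theorem is stated as a consequence of three intermediate propositions; I outline what each should accomplish and then indicate the main difficulty.

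First, I would establish the existence of a $\mu$-stationary probability measure $\nu$ on $\mathcal{FI}$. The natural compact $\text{Out}(F_N)$-space on which a stationary measure exists by the Markov--Kakutani argument is $\mathbb{P}Curr_N$ (compactness by \cite[Proposition 2.5]{Kap06}), together with $\overline{CV_N}$. The core of the first proposition (\ref{existence-stationary-ff} in the paper) should then be the statement that under the nonelementarity hypothesis on $gr(\mu)$, every $\mu$-stationary probability measure on $\mathbb{P}Curr_N$ is concentrated on currents dual to free arational trees. Granted this, Theorem \ref{arational-duality} lets one push such a measure forward to a stationary measure on $\mathcal{FI}$ via the well-defined map sending a current $\eta$ to the unique simplex in $\mathcal{FI}$ that is dual to it. The input to this concentration statement is twofold: the nonelementarity condition, which forbids stationary measures with mass on finite $gr(\mu)$-orbits of free factors, of projective arational simplices, or of conjugacy classes in $F_N$; and the finite-dimensionality statement in Proposition \ref{dual-arational}, which serves as the substitute for the uniqueness of a transverse measure on a minimal foliation. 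Arguments in the spirit of \cite{Hor14-3} applied to the nonelementary subgroup $gr(\mu)$ should rule out the degenerate cases (trees without dense orbits, arational trees dual to a surface lamination, etc.).

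Second, I would prove the convergence statement in $CV_N$ (proposition \ref{convergence-cvn}). Given the stationary measure $\nu$ on $\mathcal{FI}$ constructed above, Furstenberg's martingale argument shows that $\mathbb{P}$-almost surely the translates $g_n\nu$, viewed as measures on the compact space $\mathbb{P}Curr_N$, converge weakly to some limit measure $\nu_{\mathbf{g}}$. The positivity statement Corollary \ref{arational-current} (dual currents of distinct free arational simplices are positively paired with every tree in $\overline{cv_N}$) together with continuity of the intersection form (Theorem \ref{intersection-form}) should force $\nu_{\mathbf{g}}$ to be supported on the finite set $\text{Erg}(\xi(\mathbf{g}))$ for a unique simplex $\xi(\mathbf{g})\in\mathcal{FI}$; this defines the boundary map. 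To upgrade this to convergence of $g_nT_0$ in $\overline{CV_N}$ for every basepoint $T_0\in CV_N$, I would take any accumulation point $T_\infty$ of $(g_nT_0)$, use continuity of the intersection form to conclude that $\langle T_\infty,\eta\rangle=0$ for every $\eta\in\text{Erg}(\xi(\mathbf{g}))$, and then invoke Theorem \ref{arational-duality} to conclude $T_\infty\in\xi(\mathbf{g})$.

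Third, nonatomicity and uniqueness (proposition \ref{unique}). Nonatomicity of the hitting measure reduces to the nonelementarity condition: the set of atoms of maximal mass for a stationary measure on $\mathcal{FI}$ would be finite and $gr(\mu)$-invariant, contradicting the infinite-orbit assumption on projective arational trees. Uniqueness of the $\mu$-stationary measure on $\mathcal{FI}$ is then standard: any $\mu$-stationary measure $\nu'$ satisfies $\nu'=\int bnd(\mathbf{g})_*\mathbb{P}(d\mathbf{g})$ where $bnd(\mathbf{g})=\delta_{\xi(\mathbf{g})}$, which only depends on $\mathbf{g}$ and not on $\nu'$, forcing $\nu'$ to coincide with the hitting measure.

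The main obstacle will be the first step, namely showing that $\mu$-stationary measures on $\mathbb{P}Curr_N$ concentrate on duals of free arational simplices. In the Kaimanovich--Masur setting this exploits the rather rigid structure of $\mathcal{PMF}$ (reducible foliations live in a countable union of subsimplices, all other foliations are minimal and generically uniquely ergodic) and clean north--south dynamics of pseudo-Anosovs on $\mathcal{PMF}$. For $\text{Out}(F_N)$, the boundary $\partial CV_N$ contains many more pathological trees (trees with nontrivial simplicial parts, surface-type arational trees, trees fixed by infinite-index subgroups), the simplex of currents dual to a free arational tree is genuinely multi-dimensional so one must pass to the quotient $\mathcal{FI}$, and the dynamics of fully irreducible atoroidal elements on $\overline{CV_N}$ is more subtle than in $\mathcal{PMF}$. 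Executing this step correctly by combining the dual-lamination dichotomy of Reynolds \cite{Rey12}, the unique-duality Theorem \ref{arational-duality}, and the infinite-orbit consequences of nonelementarity is where the real work lies.
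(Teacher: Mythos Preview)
Your three-step outline (existence of a stationary measure concentrated on the right set, almost-sure convergence of orbits, nonatomicity and uniqueness) matches the paper's decomposition into Propositions \ref{existence-stationary-ff}, \ref{convergence-cvn} and \ref{unique}. However, your execution of the first two steps runs through a different compact space than the paper does, and this creates a real gap.

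The paper never works with stationary measures on $\mathbb{P}Curr_N$. It applies Markov--Kakutani directly on $\overline{CV_N}$, and invokes \cite{Hor14-3} (together with the nonelementarity assumption on conjugacy-class orbits) to show that any $\mu$-stationary measure on $\partial CV_N$ is nonatomic and concentrated on $\widetilde{\mathcal{FI}}$; this is Proposition \ref{stationary-on-outer-space}. Your proposed route---showing that stationary measures on $\mathbb{P}Curr_N$ concentrate on duals of free arational trees---is a dual statement that is \emph{not} established in \cite{Hor14-3} and would require a separate argument (the stratification of $\mathbb{P}Curr_N$ by ``bad'' currents is less well charted than that of $\partial CV_N$). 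You correctly flag this as the main obstacle, but it is an obstacle you have introduced by switching sides of the duality.

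More seriously, your step two contains a gap. Knowing that the limiting measure $\nu_{\mathbf g}$ on currents is supported on $\text{Erg}(\xi(\mathbf g))$ does \emph{not} let you conclude, from continuity of the intersection form alone, that an accumulation point $T_\infty$ of $(g_nT_0)$ satisfies $\langle T_\infty,\eta\rangle=0$ for $\eta\in\text{Erg}(\xi(\mathbf g))$: you would need control on $g_n^{-1}\eta$, and weak convergence of $g_n\nu$ says nothing about individual orbits of currents. The paper handles this link between tree-limits and current-limits in the opposite direction. It first takes a limit point $T_\infty$ of $(g_n\ast_{CV_N})$, chooses $\eta\in\text{Dual}(T_\infty)$ (so $\langle T_\infty,\eta\rangle=0$ holds by construction), and then proves the technical Lemma \ref{limit-points}: after passing to a ``universally converging'' subsequence along which $h_n^{-1}\eta$ and $h_n^{-1}\eta_0$ both converge projectively, a comparison of four normalizing sequences shows that any other limit $T'_\infty$ of $(h_nT')$, for $T'$ in a set of full $\nu$-measure, also satisfies $\langle T'_\infty,\eta\rangle=0$. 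Combined with Lemma \ref{KM-limit-points} and the unique-duality Theorem \ref{arational-duality}, this yields the dichotomy of Proposition \ref{limit-points-2} and hence convergence to a single simplex. Your sketch compresses exactly this normalization argument into the phrase ``continuity of the intersection form,'' which is where the actual work lies.

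Your third step (nonatomicity via maximal-mass atoms, uniqueness via the Furstenberg decomposition $\nu=\int\delta_{\xi(\mathbf g)}\,d\mathbb P$) is essentially what the paper does.
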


\begin{question}
Is it true that $\mathbb{P}$-a.e. sample path of the random walk on $(\text{Out}(F_N),\mu)$ converges to a \emph{uniquely ergometric} tree, i.e. a tree whose corresponding simplex consists of a single element, as in the case of mapping class groups \cite[Theorem 2.2.4]{KM96} ? One could also ask the "dual" question of \emph{unique ergodicity}, in the sense that there exists a unique current dual to the tree $T$, for limit points of sample paths of the random walk. It is known that the attracting tree in $\partial CV_N$ of a nongeometric fully irreducible element of $\text{Out}(F_N)$ is uniquely ergodic \cite[Proposition 5.6]{CHL08-3}. As generic elements of $\text{Out}(F_N)$ are fully irreducible and nongeometric, it seems reasonable to hope for such a result. However, Kaimanovich and Masur's argument in the case of mapping class groups relies on a theorem of Masur stating that any Teichmüller geodesic whose vertical foliation is minimal but not uniquely ergodic has to leave the thick part of the Teichmüller space of the associated surface \cite{Mas92}, and we do not know any good analogue of this theorem for outer space.
\end{question}

\begin{rk}
If we remove the condition on orbits of conjugacy classes of elements of $F_N$ in the definition of nonelementary subgroups, we still get convergence of almost every sample path to an element of $\mathcal{AT}$. However, if $gr(\mu)$ is nonelementary in this new sense, and virtually fixes the conjugacy class of an element in $F_N$, then it is virtually a subgroup of the mapping class group of a compact surface with a single boundary component. This case is already covered by Kaimanovich and Masur's work \cite{KM96}.
\end{rk}

\subsection{Stationary measures on $\partial CV_N$}

The following proposition was essentially proved in \cite[Proposition 3.2]{Hor14-3}, without the assumption that $gr(\mu)$ does not preserve any finite set of conjugacy classes of elements of $F_N$. By the same reasoning as in the proof in \cite{Hor14-3}, we will show this extra assumption implies that every $\mu$-stationary measure is concentrated on the set of free actions. Measurability of $\widetilde{\mathcal{AT}}$ was proved in \cite[Lemma 3.4]{Hor14-3}, and measurability of $\widetilde{\mathcal{FI}}$ follows since freeness of the action is a measurable condition.

\begin{prop} \label{stationary-on-outer-space}
Let $\mu$ be a probability measure on $\text{Out}(F_N)$, such that $\text{gr}(\mu)$ is nonelementary. Then every $\mu$-stationary Borel probability measure on $\partial CV_N$ is purely nonatomic and concentrated on $\widetilde{\mathcal{FI}}$. 
\end{prop}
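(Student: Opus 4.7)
The plan is to bootstrap from the earlier result \cite[Proposition 3.2]{Hor14-3}, which under the weaker assumption (infinite $\text{gr}(\mu)$-orbits of proper free factors and of projective arational trees, but nothing about conjugacy classes) already establishes that every $\mu$-stationary Borel probability measure $\nu$ on $\partial CV_N$ is nonatomic and concentrated on $\widetilde{\mathcal{AT}}$. Thus the only new content required here is the upgrade from $\widetilde{\mathcal{AT}}$ to $\widetilde{\mathcal{FI}}$, and this is precisely where the hypothesis on orbits of conjugacy classes has to enter. I would argue by contradiction: assume $\nu(\widetilde{\mathcal{AT}}\setminus\widetilde{\mathcal{FI}})>0$, produce a $\mu$-stationary probability measure on a countable $\text{Out}(F_N)$-set of conjugacy classes, and then extract a finite $\text{gr}(\mu)$-invariant set via a maximal-atom argument.

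The first concrete step is to build an $\text{Out}(F_N)$-equivariant measurable map from the non-free part of $\widetilde{\mathcal{AT}}$ to a countable set of conjugacy classes. By Reynolds' classification, recalled in Section~\ref{sec-arational}, every $T\in\widetilde{\mathcal{AT}}\setminus\widetilde{\mathcal{FI}}$ is dual to an arational measured lamination on a compact surface $\Sigma_T$ with exactly one boundary component, whose fundamental group is identified with $F_N$. The point stabilizers of the $F_N$-action on $T$ are exactly the conjugates of the cyclic subgroup $\langle\beta_T\rangle$ generated by the boundary curve $\beta_T$, which is primitive in $F_N$; hence the set of primitive conjugacy classes $[g]$ with $\|g\|_T=0$ consists of exactly $[\beta_T]$ and $[\beta_T^{-1}]$. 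I would therefore define $\phi(T)$ to be the unordered pair $\{[\beta_T],[\beta_T^{-1}]\}$, which lies in the countable set $\mathcal{P}$ of primitive conjugacy classes modulo inversion. Equivariance of $\phi$ is immediate from the naturality of the construction, and measurability follows because each fibre is a finite union of preimages of $0$ under the continuous translation-length functions $T\mapsto\|g\|_T$, intersected with the Borel set $\widetilde{\mathcal{AT}}\setminus\widetilde{\mathcal{FI}}$.

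Now assume for contradiction that $\nu(\widetilde{\mathcal{AT}}\setminus\widetilde{\mathcal{FI}})>0$. Since this set is $\text{Out}(F_N)$-invariant and Borel, the normalized restriction of $\nu$ to it is again $\mu$-stationary, and pushing it forward via $\phi$ yields a $\mu$-stationary probability measure $\nu^\ast$ on the countable set $\mathcal{P}$. The classical maximal-atom argument now finishes the proof: let $M=\sup_{x\in\mathcal{P}}\nu^\ast(x)$, which is positive because $\nu^\ast$ is a probability measure on a countable set, and let $X_M=\{x\in\mathcal{P}:\nu^\ast(x)=M\}$, which is finite since it has at most $\lfloor 1/M\rfloor$ elements. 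For any $x\in X_M$, the stationarity identity
\begin{displaymath}
\nu^\ast(x)=\sum_{g\in\text{supp}(\mu)}\mu(g)\,\nu^\ast(g^{-1}x)
\end{displaymath}
combined with $\nu^\ast(g^{-1}x)\le M$ forces $\nu^\ast(g^{-1}x)=M$ for every $g\in\text{supp}(\mu)$; iterating, $X_M$ is $\text{gr}(\mu)$-invariant. This produces a finite $\text{gr}(\mu)$-orbit of primitive conjugacy classes, hence of conjugacy classes of elements of $F_N$, contradicting the nonelementarity of $\text{gr}(\mu)$.

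The only nontrivial point is the construction and measurability of the map $\phi$; everything else is either cited or a standard observation about stationary measures on countable sets. I expect this to be the cleanest presentation, because it cleanly isolates the role of each of the three nonelementarity conditions: the first two yield concentration on $\widetilde{\mathcal{AT}}$ and nonatomicity via \cite{Hor14-3}, while the condition on conjugacy classes is used exactly once, at the very end, to refute the existence of a finite $\text{gr}(\mu)$-invariant set of conjugacy classes arising from the surface-type arational trees.
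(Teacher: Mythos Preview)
Your argument is correct and follows essentially the same route as the paper: both exploit Reynolds' classification to attach to each surface-type arational tree the conjugacy class of its boundary curve, and then invoke a maximum-principle/maximal-atom argument to contradict the hypothesis on orbits of conjugacy classes. The paper packages this slightly differently, applying Lemma~\ref{disjoint-translations} directly in $\partial CV_N$ to the sets $E_X=\{T:\text{cyclic point stabilizers of }T=X\}$ rather than pushing forward to a countable set first, but this is the same idea (Lemma~\ref{disjoint-translations} \emph{is} the maximal-atom argument you spell out).

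One terminological caveat: the boundary curve $\beta_T$ of a once-bounded surface of genus $g$ is the product of commutators $\prod_i[a_i,b_i]$, which lies in the commutator subgroup and is therefore \emph{not} primitive in the usual free-group sense (part of a free basis). What you need, and what is true, is that $\beta_T$ is not a proper power; this suffices to make the map $\phi$ well-defined into the countable set of conjugacy classes of non-proper-power elements modulo inversion. Replace ``primitive'' by ``not a proper power'' (or simply drop the adjective and let $\mathcal{P}$ be all conjugacy classes modulo inversion) and the argument goes through unchanged.
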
 

The proof of Proposition \ref{stationary-on-outer-space} is based on the following classical statement, whose proof relies on a maximum principle argument.

\begin{lemma} \label{disjoint-translations} (Ballmann \cite{Bal89}, Kaimanovich-Masur \cite[Lemma 2.2.2]{KM96}, Woess \cite[Lemma 3.4]{Woe89})
Let $\mu$ be a probability measure on a countable group $G$, and let $\nu$ be a $\mu$-stationary measure on a $G$-space $X$. Suppose $E\subset X$ is a measurable subset such that for all $g\in gr(\mu)$, either $gE=E$ or $gE\cap E=\emptyset$, and the $gr(\mu)$-orbit of $E$ is infinite. Then $\nu(E)=0$.
\end{lemma}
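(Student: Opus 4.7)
The plan is to apply a maximum principle argument to the nonnegative function
\begin{displaymath}
\phi\colon gr(\mu)\to[0,1],\qquad \phi(h):=\nu(hE).
\end{displaymath}
Two structural facts feed into the argument. First, stationarity of $\nu$ translates into $\mu$-harmonicity of $\phi$: using $\nu=\sum_g\mu(g)\,g_\ast\nu$, one computes
\begin{displaymath}
\phi(h)=\sum_{g\in G}\mu(g)\,\nu(g^{-1}hE)=\sum_{g\in G}\mu(g)\,\phi(g^{-1}h).
\end{displaymath}
Second, the dichotomy hypothesis says that $\phi$ descends to a function on the coset space $gr(\mu)/\mathrm{Stab}_{gr(\mu)}(E)$, which is infinite by assumption, and the disjointness of distinct translates forces
\begin{displaymath}
\sum_{[h]\in gr(\mu)/\mathrm{Stab}_{gr(\mu)}(E)}\phi(h)\ \le\ \nu\!\Bigl(\bigsqcup_{[h]}hE\Bigr)\ \le\ 1.
\end{displaymath}

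The core of the argument is the maximum principle. Supposing for contradiction that $\nu(E)>0$, the summability bound implies that for every $\varepsilon>0$ only finitely many cosets support $\phi\ge\varepsilon$, so $m:=\sup_{h\in gr(\mu)}\phi(h)$ is attained at some $h_0\in gr(\mu)$ and $m\ge\phi(e)=\nu(E)>0$. The harmonicity identity
\begin{displaymath}
m=\phi(h_0)=\sum_{g}\mu(g)\,\phi(g^{-1}h_0),
\end{displaymath}
combined with the pointwise bound $\phi\le m$, forces $\phi(g^{-1}h_0)=m$ for every $g\in\mathrm{supp}(\mu)$. Iterating the same observation at each new point where $\phi$ attains its maximum, I would deduce that $\phi\equiv m$ on the subsemigroup orbit $\Sigma\cdot h_0$, where $\Sigma:=\langle\mathrm{supp}(\mu)^{-1}\rangle$ denotes the subsemigroup of $gr(\mu)$ generated by the inverses of $\mathrm{supp}(\mu)$.

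The main obstacle is to turn this into a contradiction by verifying that the image of $\Sigma\cdot h_0$ in $gr(\mu)/\mathrm{Stab}_{gr(\mu)}(E)$ remains infinite, given that the full $gr(\mu)$-orbit of $E$ is. I would address this with the following semigroup observation: assuming the image were finite, choose a nonempty $\Sigma$-invariant subset $F^{\ast}$ of this image that is minimal for inclusion; minimality forces each $s\in\Sigma$ to restrict to a bijection $F^{\ast}\to F^{\ast}$, so the group-theoretic inverse $s^{-1}\in\mathrm{supp}(\mu)$ also preserves $F^{\ast}$. Since $\mathrm{supp}(\mu)\cup\mathrm{supp}(\mu)^{-1}$ generates $gr(\mu)$, this produces a finite $gr(\mu)$-invariant subset inside the transitive $gr(\mu)$-orbit of $E$, contradicting the infiniteness hypothesis. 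Hence $\Sigma\cdot h_0$ projects to infinitely many disjoint translates of $E$, each of $\nu$-measure $m>0$, violating the summability bound. Therefore $m=0$, and in particular $\nu(E)\le m=0$.
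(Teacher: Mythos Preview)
The paper does not actually supply a proof of this lemma; it merely cites Ballmann, Kaimanovich--Masur, and Woess, and remarks that the proof ``relies on a maximum principle argument.'' Your write-up is exactly that standard maximum-principle proof, and it is correct.

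One small point of exposition: your sentence ``minimality forces each $s\in\Sigma$ to restrict to a bijection $F^{\ast}\to F^{\ast}$'' misattributes the reason. Minimality of a finite $\Sigma$-invariant set does \emph{not} in general force each semigroup element to act bijectively (for abstract semigroup actions there are easy counterexamples). What makes it work here is that every $s\in\Sigma$ is a group element of $gr(\mu)$ acting on the coset space $gr(\mu)/\mathrm{Stab}_{gr(\mu)}(E)$, hence acts injectively; combined with $sF^{\ast}\subseteq F^{\ast}$ and finiteness of $F^{\ast}$, this gives $sF^{\ast}=F^{\ast}$. In particular you do not need to pass to a minimal subset at all: the (assumed finite) $\Sigma$-orbit of $[h_0]$ itself is already preserved by each generator of $\Sigma$, hence by their inverses in $\mathrm{supp}(\mu)$, hence by all of $gr(\mu)$, contradicting transitivity on an infinite coset space. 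With this correction the argument goes through unchanged.
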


\begin{proof}[Proof of Proposition \ref{stationary-on-outer-space}]
Let $\nu$ be a $\mu$-stationary measure on $\partial CV_N$. The fact that $\nu(\widetilde{\mathcal{AT}})=1$ was proved in \cite[Lemma 3.4]{Hor14-3}. Nonatomicity of $\nu$ follows from Lemma \ref{disjoint-translations} applied to the singleton $E=\{T\}$, where $T\in\widetilde{\mathcal{AT}}$, since nonelementarity of $gr(\mu)$ implies that the $gr(\mu)$-orbit of $T$ is infinite. 

Let $X$ be a finite set of conjugacy classes of elements of $F_N$. The set $E_X$ of trees in $\partial CV_N$ whose collection of cyclic point stabilizers is equal to $X$ is measurable, see \cite[Lemma 3.5]{Hor14-3}, and nonelementarity of $gr(\mu)$ implies that the $gr(\mu)$-orbit of $E_X$ is infinite. As arational trees which are not free are dual to an arational measured foliation on a surface with one boundary component \cite[Theorem 1.1]{Rey12}, for which the boundary curve is the only point stabilizer, Proposition \ref{stationary-on-outer-space} follows from Lemma \ref{disjoint-translations} applied to the sets $E_X$.
\end{proof}

As a consequence of Proposition \ref{stationary-on-outer-space}, we get the following result.  

\begin{prop}\label{existence-stationary-ff}
Let $\mu$ be a probability measure on $\text{Out}(F_N)$, whose support generates a nonelementary subgroup of $\text{Out}(F_N)$. Then there exists a $\mu$-stationary probability measure on $\mathcal{FI}$, and all such measures are nonatomic.
\end{prop}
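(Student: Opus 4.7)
My plan is to produce the $\mu$-stationary measure on $\mathcal{FI}$ by a compactness argument on $\partial CV_N$ combined with Proposition \ref{stationary-on-outer-space}, and to rule out atoms by combining the classical maximum principle for stationary measures with the finite-dimensionality of the simplex of length measures on a free arational tree.

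For existence I would first note that $\overline{CV_N}$ is compact and $CV_N$ is open in it, so $\partial CV_N$ is a compact $\text{Out}(F_N)$-space. The space of Borel probability measures on $\partial CV_N$ in the weak-$\ast$ topology is then compact and convex, and convolution $\tilde\nu \mapsto \mu \ast \tilde\nu$ is continuous and affine, so the Markov--Kakutani fixed-point theorem (or a weak-$\ast$ limit of Cesàro averages of $\mu^{\ast n} \ast \delta_x$) yields a $\mu$-stationary Borel probability measure $\tilde\nu$ on $\partial CV_N$. By Proposition \ref{stationary-on-outer-space}, $\tilde\nu$ is concentrated on the Borel subset $\widetilde{\mathcal{FI}}$, and pushing it forward along the $\text{Out}(F_N)$-equivariant quotient $\pi : \widetilde{\mathcal{FI}} \to \mathcal{FI}$ gives a $\mu$-stationary probability measure $\nu := \pi_\ast \tilde\nu$ on $\mathcal{FI}$.

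For nonatomicity I would take any $\mu$-stationary probability measure $\nu$ on $\mathcal{FI}$, assume for contradiction that it has an atom, and set $m := \sup_{[T]} \nu(\{[T]\}) > 0$ and $A := \{[T] : \nu(\{[T]\}) = m\}$; the set $A$ is finite since $\nu(\mathcal{FI}) = 1$. The stationarity equation at a maximizer $[T] \in A$,
\begin{displaymath}
m = \nu(\{[T]\}) = \sum_{g \in \text{supp}(\mu)} \mu(g)\, \nu(\{g^{-1}[T]\}),
\end{displaymath}
together with $\nu(\{g^{-1}[T]\}) \leq m$, forces $\nu(\{g^{-1}[T]\}) = m$ for every $g \in \text{supp}(\mu)$. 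Since $A$ is finite and each $g$ acts as a bijection, this gives $gA = A$ for every $g \in \text{supp}(\mu)$, and hence $A$ is $gr(\mu)$-invariant. Thus the $gr(\mu)$-orbit of each $[T] \in A$ is finite and its stabilizer $H \leq gr(\mu)$ has finite index.

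The subgroup $H$ setwise preserves the simplex $\pi^{-1}([T]) \subseteq \partial CV_N$ of length measures on the underlying topological tree, whose extreme points correspond, under the intersection pairing, to the elements of $\text{Erg}(T)$ and hence number at most $3N-5$ by Proposition \ref{dual-arational}. Since $H$ permutes these finitely many vertices, some finite-index subgroup $H' \leq H$ fixes each of them; each such vertex is a projective free arational tree in $\widetilde{\mathcal{FI}}$, and its $gr(\mu)$-orbit therefore has cardinality at most $[gr(\mu):H'] < \infty$. This contradicts the infinite-orbit clause in the definition of nonelementarity applied to projective arational trees, so $\nu$ has no atoms. I expect the only delicate point to be the identification of extreme points of the length-measure simplex with ergodic dual currents; once this is granted, the contradiction with nonelementarity is immediate.
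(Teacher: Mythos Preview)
Your existence argument and the overall strategy for nonatomicity (maximum principle to produce a finite $gr(\mu)$-invariant set of simplices, then pass to extreme points to contradict nonelementarity) are exactly what the paper does. However, the step you yourself flag as ``the only delicate point'' is in fact wrong as stated, and this is the one place your argument diverges from the paper.

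The simplex $\pi^{-1}([T])\subseteq\partial CV_N$ is the simplex of \emph{length measures} on the underlying topological tree; its extreme points are trees (the ``ergometric'' measures), not currents. The set $\text{Erg}(T)$ defined in the paper is the set of extreme points of the \emph{dual current} simplex $\text{Dual}(T)\subseteq\mathbb{P}Curr_N$, which is a different convex set living in a different space. There is no general bijection between ergometric length measures and ergodic dual currents, so you cannot invoke Proposition \ref{dual-arational} (which bounds $|\text{Erg}(T)|$ via Coulbois--Hilion) to bound the number of vertices of the length-measure simplex. The paper instead appeals to Guirardel's result \cite[Corollary 5.4]{Gui00}, which directly shows that a tree with dense orbits admits only finitely many ergodic length measures. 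Once you replace your appeal to Proposition \ref{dual-arational} by this result, the rest of your argument goes through verbatim and matches the paper's proof.
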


\begin{proof}
The first part of the statement is a consequence of Proposition \ref{stationary-on-outer-space}. Nonatomicity is proved as above, by noticing that if $gr(\mu)$ virtually fixes a simplex in $\mathcal{FI}$, then it also preserves the set of extremal points of this simplex in $\overline{CV_N}$, which is finite by \cite[Corollary 5.4]{Gui00}.
\end{proof}

As any $\mu$-stationary measure on $\partial CV_N$ projects to a nonatomic $\mu$-stationary measure on $\mathcal{FI}$, we get the following result. Notice that $\sim$-classes are compact subsets of $\partial CV_N$, in particular they are measurable.

\begin{prop} \label{nonatomic}
Let $\mu$ be a probability measure on $\text{Out}(F_N)$, whose support generates a nonelementary subgroup of $\text{Out}(F_N)$, and let $\nu$ be a $\mu$-stationary probability measure on $\partial CV_N$. Then every class of the relation $\sim$ on $\widetilde{\mathcal{AT}}$ has $\nu$-measure $0$.
\qed
\end{prop}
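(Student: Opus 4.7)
The plan is to reduce the statement to the nonatomicity of $\mu$-stationary measures on $\mathcal{FI}$ already obtained in Proposition \ref{existence-stationary-ff}, by pushing $\nu$ forward along the quotient map $\widetilde{\mathcal{FI}}\to\mathcal{FI}$.

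First I would observe, using Proposition \ref{stationary-on-outer-space}, that $\nu$ is concentrated on $\widetilde{\mathcal{FI}}$, so any $\sim$-class that meets $\widetilde{\mathcal{AT}}\smallsetminus\widetilde{\mathcal{FI}}$ has $\nu$-measure zero automatically. (Theorem \ref{arational-duality} in fact shows that every $\sim$-class meeting $\widetilde{\mathcal{FI}}$ is entirely contained in $\widetilde{\mathcal{FI}}$, so the two cases do not overlap.) It then suffices to handle $\sim$-classes $E\subseteq\widetilde{\mathcal{FI}}$; these are measurable subsets of $\partial CV_N$ by the compactness statement cited from \cite[Lemma 7.1]{BR13}.

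Next I would exploit $\text{Out}(F_N)$-equivariance: since the dual lamination transforms equivariantly under $\text{Out}(F_N)$, the equivalence relation $\sim$ is $\text{Out}(F_N)$-invariant, so the quotient map $\pi:\widetilde{\mathcal{FI}}\to\mathcal{FI}$ is $\text{Out}(F_N)$-equivariant and Borel. Consequently, $\pi_{\ast}\nu$ is a well-defined $\mu$-stationary Borel probability measure on $\mathcal{FI}$. By Proposition \ref{existence-stationary-ff}, the measure $\pi_{\ast}\nu$ is nonatomic. Since each $\sim$-class $E\subseteq\widetilde{\mathcal{FI}}$ is exactly a fiber $\pi^{-1}(\{[T]\})$ for some $[T]\in\mathcal{FI}$, we conclude
\begin{displaymath}
\nu(E)=\nu(\pi^{-1}(\{[T]\}))=(\pi_{\ast}\nu)(\{[T]\})=0,
\end{displaymath}
which is the desired conclusion.

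This proof is short and essentially bookkeeping; the substantive content has already been absorbed into Propositions \ref{stationary-on-outer-space} and \ref{existence-stationary-ff}. The only point requiring care is verifying that $\sim$-classes are genuinely Borel measurable fibers of a Borel map (so that the pushforward argument is legitimate), and that a $\sim$-class entirely contained in $\widetilde{\mathcal{FI}}$ cannot accidentally pick up mass from the part of $\nu$ living in $\widetilde{\mathcal{AT}}\smallsetminus\widetilde{\mathcal{FI}}$; both are immediate from the results quoted above. I do not foresee a real obstacle.
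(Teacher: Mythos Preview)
Your proposal is correct and follows exactly the paper's approach: the paper simply remarks (in the sentence preceding the proposition) that any $\mu$-stationary measure on $\partial CV_N$ projects to a nonatomic $\mu$-stationary measure on $\mathcal{FI}$, and that $\sim$-classes are compact (hence measurable). Your write-up just unpacks this one-line argument, including the observation via Proposition~\ref{stationary-on-outer-space} that $\nu$ is concentrated on $\widetilde{\mathcal{FI}}$.
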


\subsection{Limit points of random walks on $CV_N$, and convergence to $\mathcal{FI}$}

Adapting Kaimanovich and Masur's argument from \cite[Section 1.5]{KM96} to the $\text{Out}(F_N)$ case, we now study the possible limit points of sequences $(g_nT)_{n\in\mathbb{N}}$, where $(g_n)_{n\in\mathbb{N}}$ is a sequence of elements of $\text{Out}(F_N)$ which tends to infinity, and $T\in\overline{CV_N}$ is an $F_N$-tree. We recall that whenever $X$ is a Borel space, a sequence of measures $(\nu_n)_{n\in\mathbb{N}}$ on $X$ \emph{weakly converges} to a measure $\nu$ if $(\nu_n(f))_{n\in\mathbb{N}}$ converges to $\nu(f)$ for every bounded continuous real-valued function on $X$. The goal of the present section is to prove the following result. The convergence statement in Theorem \ref{convergence-drift} is a consequence of Proposition \ref{convergence-cvn}.

\begin{prop} \label{convergence-cvn}
Let $\mu$ be a probability measure on $\text{Out}(F_N)$, whose support generates a nonelementary subgroup of $\text{Out}(F_N)$, and let $\nu$ be a $\mu$-stationary measure on $\partial CV_N$. Then for $\mathbb{P}$-a.e. sample path $\mathbf{g}:=(g_n)_{n\in\mathbb{N}}$ of the random walk on $(\text{Out}(F_N),\mu)$, there is a simplex $\Delta(\mathbf{g})\subseteq\widetilde{\mathcal{FI}}$, such that 

\begin{itemize}
\item the translates $g_n\nu$ weakly  converge to a measure $\lambda(\mathbf{g})$ supported on $\Delta(\mathbf{g})$, and
\item for all $T\in CV_N$, all limit points of the sequence $(g_n T)_{n\in\mathbb{N}}$ belong to $\Delta(\mathbf{g})$.
\end{itemize}
\end{prop}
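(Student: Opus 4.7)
Following Kaimanovich--Masur \cite[Section 1.5]{KM96}, the strategy is to combine a measure-valued martingale convergence argument with a contraction lemma for the action of $\text{Out}(F_N)$ on $\overline{CV_N}$. Stationarity of $\nu$ together with the Markov property makes $(g_n\nu)_{n\in\mathbb{N}}$ a martingale in the compact metrizable space of Borel probability measures on $\overline{CV_N}$ (with weak-$\ast$ topology); testing against a countable dense family in $C(\overline{CV_N})$ and applying bounded martingale convergence yields, for $\mathbb{P}$-a.e.\ sample path $\mathbf{g}$, a weak-$\ast$ limit $\lambda(\mathbf{g})$.

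Fix $T_0\in CV_N$. By compactness of $\overline{CV_N}$, extract a subsequence $(g_{n_k})$ along which $g_{n_k}T_0\to T_+$ and $g_{n_k}^{-1}T_0\to T_-$ in $\overline{CV_N}$. Applying Proposition \ref{stationary-on-outer-space} both to $\mu$ and to $\check{\mu}$ (noting that $\text{gr}(\check{\mu})=\text{gr}(\mu)$ is still nonelementary, so $\check{\mu}$ admits a nonatomic stationary measure $\check{\nu}$ concentrated on $\widetilde{\mathcal{FI}}$), an equivariance argument in the spirit of the proof of Proposition \ref{stationary-on-outer-space} shows that $\mathbb{P}$-a.s.\ the accumulation points $T_+$ and $T_-$ both lie in $\widetilde{\mathcal{FI}}$.

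The key geometric input---and the main obstacle---is the following contraction lemma: given any sequence $(g_n)$ in $\text{Out}(F_N)$ with $g_nT_0\to T_+$, $g_n^{-1}T_0\to T_-$ and $T_\pm\in\widetilde{\mathcal{FI}}$, and given a Borel probability measure $\eta$ on $\overline{CV_N}$ that assigns measure zero to the $\sim$-equivalence class $[T_-]$ of $T_-$, every weak-$\ast$ accumulation point of $(g_n\eta)$ is supported on $[T_+]$. The proof uses the intersection form: for any $S\in\overline{CV_N}\setminus[T_-]$, Theorem \ref{arational-duality} together with Corollary \ref{arational-current} yield $\langle S,\eta'\rangle>0$ for every $\eta'\in\text{Erg}(T_-)$, making the pair $(S,T_-)$ positive. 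Combined with $g_n^{-1}T_0\to T_-$ and Theorem \ref{White}, this positivity forces $g_nS$ to accumulate inside $[T_+]$; finiteness of $\text{Erg}(T_-)$ (Proposition \ref{dual-arational}) is what lets one make this quantitative via a uniform candidate-type stretch estimate.

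By Proposition \ref{nonatomic}, $\nu$ charges no $\sim$-class, so applying the contraction lemma with $\eta=\nu$ yields $\mathrm{supp}(\lambda(\mathbf{g}))\subseteq[T_+]$. Uniqueness of the martingale limit then forces all subsequential choices of $T_+$ to generate the same simplex, which we denote $\Delta(\mathbf{g})$; this establishes the first bullet. Applying the contraction lemma with $\eta=\delta_T$ for $T\in CV_N$ (which a fortiori vanishes on $[T_-]\subseteq\partial CV_N$) establishes the second bullet.
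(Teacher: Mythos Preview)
Your outline has a genuine gap at the third step, where you assert that $\mathbb{P}$-a.s.\ the accumulation points $T_+$ and $T_-$ both lie in $\widetilde{\mathcal{FI}}$. For $T_+$ this is essentially the second bullet of the proposition you are proving, so invoking it here is circular; the appeal to ``an equivariance argument in the spirit of Proposition~\ref{stationary-on-outer-space}'' does not help, since that proposition constrains \emph{stationary measures}, not pointwise orbit limits. For $T_-$ the situation is worse: the process $(g_n^{-1})_{n\ge 0}$ is \emph{not} a sample path of the right $\check{\mu}$-random walk (writing $g_n=s_1\cdots s_n$ one has $g_n^{-1}=s_n^{-1}\cdots s_1^{-1}$, a left walk), so convergence results for the $\check{\mu}$-walk do not transfer. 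Your contraction lemma then compounds the problem by conflating trees and currents: the hypothesis $g_n^{-1}T_0\to T_-$ is a statement about a tree limit, whereas the intersection-form computation that actually pins down $[T_+]$ requires controlling limits of $g_n^{-1}$ acting on \emph{currents}; there is no a priori link between $T_-$ and those current limits, and Theorem~\ref{White} does not supply one.

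The paper avoids this circularity by reversing the logic. It first proves a dichotomy (Proposition~\ref{limit-points-2}) valid for \emph{any} unbounded sequence $(g_n)$ with $g_n\nu\to\lambda$: either $\lambda$ is supported on $\partial CV_N\smallsetminus\widetilde{\mathcal{FI}}$, or on a single $\sim$-class in $\widetilde{\mathcal{FI}}$. This is established without knowing in advance whether the orbit limit $T_\infty$ is arational, by choosing $\eta\in\text{Dual}(T_\infty)$, passing to a subsequence along which $h_n^{-1}\eta\to\eta^\infty$ and $h_n^{-1}\eta_0\to\eta_0^\infty$ in $\mathbb{P}Curr_N$ (a ``universally converging pair''), and running the intersection-form computation of Lemma~\ref{limit-points} on the full-measure set $E(\eta^\infty,\eta_0^\infty)$. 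Only afterwards does the Furstenberg decomposition $\nu=\int\lambda(\mathbf{g})\,d\mathbb{P}(\mathbf{g})$, combined with $\nu(\widetilde{\mathcal{FI}})=1$ from Proposition~\ref{stationary-on-outer-space}, rule out the first alternative $\mathbb{P}$-a.s. The point is that the location of $T_\infty$ in $\widetilde{\mathcal{FI}}$ is an \emph{output} of the argument, not an input.
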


Our proof of Proposition \ref{convergence-cvn} relies on the following general statement about random walks on countable groups.

\begin{lemma} \label{decomposition-measure} (Furstenberg \cite{Fur73}, Kaimanovich-Masur \cite[Lemma 2.2.3]{KM96})
Let $\mu$ be a probability measure on a countable group $G$, and let $\nu$ be a $\mu$-stationary measure on a compact separable $G$-space. Then for $\mathbb{P}$-a.e. sample path $\mathbf{g}=(g_n)_{n\in\mathbb{N}}$ of the random walk on $(G,\mu)$, the translates $g_n\nu$ converge weakly to a limit $\lambda(\mathbf{g})$, and
\begin{displaymath}
\nu=\int_{G^{\mathbb{N}}} \lambda(\mathbf{g})d\mathbb{P}(\mathbf{g}).
\end{displaymath}
\end{lemma}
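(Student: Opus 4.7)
The plan is to apply Lemma \ref{decomposition-measure} to extract a weak limit of $g_n\nu$, use the rigidity of arational trees (Proposition \ref{stationary-on-outer-space}, Corollary \ref{arational-current}, Theorem \ref{arational-duality}) to concentrate this limit on a single simplex, and then transfer the concentration to orbit limits, following the blueprint of Kaimanovich--Masur \cite[\S 2.2]{KM96} with Proposition \ref{dual-arational} and Theorem \ref{arational-duality} playing the role of unique ergodicity.

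Since $\overline{CV_N}$ is compact metrizable, Lemma \ref{decomposition-measure} gives, for $\mathbb{P}$-a.e.\ $\mathbf{g}$, weak convergence $g_n\nu\rightharpoonup\lambda(\mathbf{g})$ together with the integral representation $\nu=\int\lambda(\mathbf{g})\,d\mathbb{P}$. Proposition \ref{stationary-on-outer-space} yields $\nu(\widetilde{\mathcal{FI}})=1$, and the integral representation transfers this to $\lambda(\mathbf{g})(\widetilde{\mathcal{FI}})=1$ almost surely. I define $\Delta(\mathbf{g})$ as the unique $\sim$-class carrying $\lambda(\mathbf{g})$; to see this is well defined, I show that the pushforward $\pi_\ast\lambda(\mathbf{g})$ to $\mathcal{FI}$ under the quotient $\pi\colon\widetilde{\mathcal{FI}}\to\mathcal{FI}$ is a Dirac mass for $\mathbb{P}$-a.e.\ $\mathbf{g}$. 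This uses the shift cocycle $\lambda(\mathbf{g})=s_1\lambda(\sigma\mathbf{g})$, the nonatomicity of $\pi_\ast\nu$ (Propositions \ref{existence-stationary-ff} and \ref{nonatomic}), and the positivity of distinct ergodic pairs (Corollary \ref{arational-current}): if $\pi_\ast\lambda(\mathbf{g})$ charged two distinct points with positive probability, pairing with an independent reflected path and averaging would produce a $\mu$-stationary measure on pairs of distinct simplices, and the intersection-form positivity of Corollary \ref{arational-current} would yield a contradiction by a Furstenberg--Kaimanovich-style averaging argument.

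For the orbit-convergence statement, fix $T_0\in CV_N$ and let $T_\infty\in\overline{CV_N}$ be any subsequential limit of $g_{n_k}T_0$. Choose $T'\in\Delta(\mathbf{g})$ and $\eta\in\text{Erg}(T')$ (nonempty and finite by Proposition \ref{dual-arational}), so that $\langle T',\eta\rangle=0$. The argument that $T_\infty\in\Delta(\mathbf{g})$ proceeds in two steps. First, one builds a continuous projective test function on $\overline{CV_N}$ from $\eta$ and a finite family of currents positive-covering $\overline{CV_N}$ (existing by compactness of $\overline{CV_N}$ and the fact that $\text{Dual}(T)$ is a proper subspace for any $T\in\overline{cv_N}$) and feeds the weak convergence $g_{n_k}\nu\rightharpoonup\lambda(\mathbf{g})$, together with the vanishing $\langle\cdot,\eta\rangle\equiv 0$ on $\text{supp}(\lambda(\mathbf{g}))$, into an asymptotic vanishing of the corresponding projective pairing along $g_{n_k}\nu$. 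Second, using $\text{Out}(F_N)$-equivariance $\langle g_{n_k}T_0,\eta\rangle=\langle T_0,g_{n_k}^{-1}\eta\rangle$, projective compactness in $\mathbb{P}\text{Curr}_N$, the uniform positivity $\langle T_0,\eta'\rangle>0$ for all nonzero $\eta'\in\text{Curr}_N$ (Theorem \ref{currents-duality}, since $T_0\in cv_N$), and continuity of the pairing (Theorem \ref{intersection-form}), one passes to the orbit point to conclude $\langle T_\infty,\eta\rangle=0$ on suitable lifts in $\overline{cv_N}$. Theorem \ref{arational-duality} applied with $T_1=T'$ and $T_2=T_\infty$ then gives $T_\infty\in\widetilde{\mathcal{FI}}$ and $T_\infty\sim T'$, i.e.\ $T_\infty\in\Delta(\mathbf{g})$.

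The main obstacle is the Dirac-concentration of $\pi_\ast\lambda(\mathbf{g})$: the arational-duality theorem and the positivity of ergodic pairs are the correct $\text{Out}(F_N)$ substitutes for unique ergodicity on mapping class groups, but turning them into the desired two-atom exclusion requires a careful two-sided boundary argument; the transfer from weak measure convergence to pointwise orbit convergence in the final step is also delicate and relies on the compactness/continuity properties of the intersection form.
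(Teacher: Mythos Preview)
Your proposal does not prove the stated lemma at all. Lemma~\ref{decomposition-measure} is a general statement about random walks on an arbitrary countable group $G$ acting on an arbitrary compact separable $G$-space: for almost every sample path the translates $g_n\nu$ converge weakly, and $\nu$ is the barycenter of the limit measures. It has nothing to do with $\text{Out}(F_N)$, outer space, arational trees, or the intersection form. The paper does not supply a proof either; it simply cites Furstenberg and Kaimanovich--Masur, and the standard argument is a martingale one: for each bounded continuous $f$ the sequence $\int f\,d(g_n\nu)$ is a bounded martingale for the natural filtration (by $\mu$-stationarity of $\nu$), hence converges almost surely; separability lets one pass to a countable dense family of test functions and obtain weak convergence of $g_n\nu$ itself, and the integral formula follows from stationarity and dominated convergence.

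What you have written is instead a sketch of Proposition~\ref{convergence-cvn}, which \emph{applies} Lemma~\ref{decomposition-measure} rather than proves it. Even viewed as a proof of Proposition~\ref{convergence-cvn}, your argument diverges from the paper's and has gaps: the paper does not use any ``two-sided boundary'' or ``Furstenberg--Kaimanovich-style averaging'' to get Dirac concentration of $\pi_\ast\lambda(\mathbf{g})$. Instead it fixes a single limit point $T_\infty$ of $(g_n\ast_{CV_N})$, picks $\eta\in\text{Dual}(T_\infty)$, passes to a subsequence along which $h_n^{-1}\eta$ and $h_n^{-1}\eta_0$ converge (Lemma~\ref{universal-subsequence}), and then uses a direct ratio computation (Lemma~\ref{limit-points}) together with Lemma~\ref{KM-limit-points} to show $\lambda$ is supported on $\{T:\langle T,\eta\rangle=0\}$, which Theorem~\ref{arational-duality} identifies with a single $\sim$-class. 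Your proposed route to Dirac concentration via shift cocycles and pairings of independent paths is not carried out and would require substantial additional work; the paper's approach avoids it entirely.
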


We will show the following statement.

\begin{prop} \label{limit-points-2}
Let $\mu$ be a probability measure on $\text{Out}(F_N)$, whose support generates a nonelementary subgroup of $\text{Out}(F_N)$. Let $\nu$ be a $\mu$-stationary probability measure on $\partial CV_N$, and let $(g_n)_{n\in\mathbb{N}}$ be an unbounded sequence of elements of $\text{Out}(F_N)$ such that $g_n\nu$ converges weakly to a measure $\lambda$ on $\partial CV_N$. Then either $\lambda$ is concentrated on $\partial CV_N\smallsetminus \widetilde{\mathcal{FI}}$, or it is concentrated on $\widetilde{\mathcal{FI}}$, on a single class of the relation $\sim$. In the first case, all limit points of sequences $(g_nT)_{n\in\mathbb{N}}$ for $T\in CV_N$ are contained in $\partial CV_N\smallsetminus \widetilde{\mathcal{FI}}$, and in the second case they are all contained in the same class of the relation $\sim$, on which $\lambda$ is concentrated.
\end{prop}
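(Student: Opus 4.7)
The plan is to adapt Kaimanovich--Masur's argument from \cite[Lemma 2.2.4]{KM96}, replacing unique ergodicity of a generic measured foliation by the unique-duality statement Theorem \ref{arational-duality}. By compactness of $\overline{CV_N}$, I would first pass to a subsequence (still denoted $(g_n)$, which preserves the hypothesis $g_n\nu\to\lambda$) such that $g_n T_0\to T_+$ and $g_n^{-1}T_0\to T_-$ in $\overline{CV_N}$; proper discontinuity of $\text{Out}(F_N)$ on $CV_N$ forces $T_\pm\in\partial CV_N$. Pick $\eta_-\in Curr_N$ with $\langle T_-,\eta_-\rangle=0$ using Theorem \ref{currents-duality}. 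The set $G:=\{T\in\overline{CV_N}:\langle T,\eta_-\rangle>0\}$ then contains all of $CV_N$ by Theorem \ref{currents-duality}; moreover, inside $\widetilde{\mathcal{FI}}$ its complement is contained in the $\sim$-class of $T_-$ by Theorem \ref{arational-duality}, so it has full $\nu$-measure by Proposition \ref{nonatomic}.

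The heart of the proof is a contraction estimate. Using $\text{Out}(F_N)$-invariance of the intersection form and White's formula (Theorem \ref{White}, extended to $\overline{CV_N}$ by Algom-Kfir), for every current $\eta$ and every $T\in CV_N$ one has
\[
\frac{\langle g_nT,\eta\rangle}{\langle g_nT_0,\eta\rangle}=\frac{\langle T,g_n^{-1}\eta\rangle}{\langle T_0,g_n^{-1}\eta\rangle}\le\text{Lip}(T_0,T),
\]
together with the symmetric bound. I would use this to show that for every $T\in G$, every accumulation point $T'$ of $(g_nT)$ in $\overline{CV_N}$ lies in
\[
Z(T_+):=\bigcap_{\eta\in\text{Dual}(T_+)}\{T'\in\overline{CV_N}:\langle T',\eta\rangle=0\},
\]
which is closed by continuity of the intersection form. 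For fixed $\eta\in\text{Dual}(T_+)$, the argument picks a test element $g^*\in F_N$ hyperbolic in both $T_+$ and $T'$ (the non-hyperbolic elements in a tree in $\overline{cv_N}$ lie in a union of point stabilizers, each cyclic by Bestvina--Feighn, which cannot exhaust $F_N$), fixes compatible projective representatives of $g_nT_0$ and $g_nT$ by normalizing $\|g^*\|$, and uses the Lipschitz inequality applied to $\|g_n^{-1}g^*\|_{T_0}/\|g_n^{-1}g^*\|_T$ to check that the ratio of normalizing scalars remains bounded; the displayed estimate then forces $\langle T',\eta\rangle=0$.

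Theorem \ref{arational-duality} now delivers the dichotomy. If $T_+\in\widetilde{\mathcal{FI}}$, then any $T'\in Z(T_+)\cap\partial CV_N$ satisfies $\langle T',\eta\rangle=0$ for some $\eta\in\text{Dual}(T_+)$, and Theorem \ref{arational-duality} forces $T'\in\widetilde{\mathcal{FI}}$ with $T'\sim T_+$, so $Z(T_+)\cap\partial CV_N$ is contained in the $\sim$-class of $T_+$. If $T_+\notin\widetilde{\mathcal{FI}}$, then $Z(T_+)\cap\widetilde{\mathcal{FI}}=\emptyset$: any $T'$ in this intersection would, via a current $\eta\in\text{Dual}(T_+)\subseteq\text{Dual}(T')$ and Theorem \ref{arational-duality} applied to $T'$, pull $T_+\sim T'\in\widetilde{\mathcal{FI}}$, a contradiction. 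For $T\in CV_N$ this yields directly both statements about limit points of $(g_nT)$. To transfer the conclusion to $\lambda$, I combine the $\nu$-a.e.\ containment of accumulation points in the closed set $Z(T_+)$ with bounded convergence applied to $\mathbf{1}\{g_nT\notin U\}$ to obtain $g_n\nu(U)\to 1$ for every open neighborhood $U$ of $Z(T_+)$; Portmanteau's theorem and metrizability of $\overline{CV_N}$ then yield $\lambda(Z(T_+))=1$, whence the two cases on $\lambda$.

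The main obstacle is the normalization step in the contraction estimate. It is clean for $T\in CV_N$ (both Lipschitz bounds are finite), but subtler for the $\nu$-a.e.\ step: for a generic $T\in\widetilde{\mathcal{FI}}$, the reverse bound $\text{Lip}(T,T_0)$ is infinite, so one must control $\|g_n^{-1}g^*\|_T$ from below by different means---for instance by further restricting to a $\nu$-full set of $T$ on which the sequence $(g_n^{-1}g^*)$ stays hyperbolic with translation length in $T$ comparable to that in $T_0$, which requires excluding an additional null set determined by the dual currents of $T$.
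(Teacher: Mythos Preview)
Your overall strategy parallels the paper's (and Kaimanovich--Masur's): pass to a subsequence along which $g_n\ast_{CV_N}\to T_+$, show that $\nu$-a.e.\ $T$ is pushed by $(g_n)$ into the closed set $\{T':\langle T',\eta\rangle=0\}$ for some $\eta\in\text{Dual}(T_+)$, conclude via Portmanteau that $\lambda$ is concentrated there, and finish with the duality dichotomy from Theorem~\ref{arational-duality}. The part of your argument treating $T\in CV_N$ is correct, and that alone handles the second assertion of the proposition (limit points of $(g_nT)$ for $T\in CV_N$), modulo the small point that you must argue for \emph{every} subsequential limit $T_+$, not just the one you fixed; the paper does this by observing that two such limits forcing $\lambda$ onto disjoint measurable sets would be contradictory.

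The genuine gap is exactly the one you flag: the normalization for $T\in\partial CV_N$. You need $\|g_n^{-1}g^*\|_T$ bounded below by a constant times $\|g_n^{-1}g^*\|_{T_0}$, and your proposed fix --- restrict to a $\nu$-full set where the sequence ``stays hyperbolic with comparable translation length'' --- is circular: your test element $g^*$ is chosen hyperbolic in the accumulation point $T'$, which depends on $T$ and on the further subsequence, so the null set to be excluded is not defined in advance of the very limit you are trying to control. (Relatedly, the role of $T_-$, $\eta_-$ and the set $G$ in your argument is never cashed in: the contraction estimate you wrote uses only $\eta\in\text{Dual}(T_+)$.)

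The paper resolves this by replacing the single test element with a \emph{reference current} $\eta_0$ chosen once and for all so that $\langle T,\eta_0\rangle>0$ for every $T\in\overline{cv_N}$ (for instance $\eta_0=\sum_i[x_i]+\sum_{i\ge 2}[x_1x_i]$ for a basis). One then passes to a further subsequence $(h_n)$ so that both $h_n^{-1}\eta\to\eta^\infty$ and $h_n^{-1}\eta_0\to\eta_0^\infty$ projectively (this is the paper's notion of a \emph{universally converging pair}). The full-measure set on which the argument runs is
\[
E(\eta^\infty,\eta_0^\infty)=\{T\in\widetilde{\mathcal{FI}}:\langle T,\eta^\infty\rangle>0\ \text{and}\ \langle T,\eta_0^\infty\rangle>0\},
\]
which is defined \emph{before} looking at any accumulation point of $(h_nT)$ and has full $\nu$-measure by Theorem~\ref{arational-duality} and Proposition~\ref{nonatomic}. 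The normalization ratios are then controlled by the positive quantities $\langle T_+,\eta_0\rangle$ and $\langle T',\eta_0\rangle$ (always positive, by the choice of $\eta_0$) together with $\langle\ast_{cv_N},\eta_0^\infty\rangle$ and $\langle T,\eta_0^\infty\rangle$ (positive on the full-measure set), and the desired vanishing $\langle T',\eta\rangle=0$ follows from the same ratio computation you sketched. In short, the missing idea is to normalize against a \emph{filling} current rather than against a single group element.
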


We first explain how to deduce Proposition \ref{convergence-cvn} from Lemma \ref{decomposition-measure} and Proposition \ref{limit-points-2}.

\begin{proof}[Proof of Proposition \ref{convergence-cvn}]
Let $\nu$ be a $\mu$-stationary measure on $\partial CV_N$. As $gr(\mu)$ is nonelementary, the measure $\nu$ is concentrated on $\widetilde{\mathcal{FI}}$ (Proposition \ref{stationary-on-outer-space}). Lemma \ref{decomposition-measure} thus implies that for $\mathbb{P}$-a.e. sample path $\textbf{g}$ of the random walk on $(\text{Out}(F_N),\mu)$, the limit measure $\lambda(\textbf{g})$ exists and is concentrated on $\widetilde{\mathcal{FI}}$. As $\text{gr}(\mu)$ is nonelementary, it is unbounded, so $\mathbb{P}$-a.e. sample path of the random walk is unbounded. Proposition \ref{limit-points-2} implies that for $\mathbb{P}$-a.e. sample path $\textbf{g}=(g_n)_{n\in\mathbb{N}}$ of the random walk, the measure $\lambda(\textbf{g})$ is concentrated on a single $\sim$-class $\Delta(\mathbf{g})$, and for all $T\in CV_N$, all limit points of the sequence $(g_n T)_{n\in\mathbb{N}}$ belong to $\Delta(\mathbf{g})$. 
\end{proof}

We are left showing Proposition \ref{limit-points-2}. We will appeal to another general statement due to Kaimanovich and Masur. We provide a proof for completeness.

\begin{lemma} \label{KM-limit-points} (Kaimanovich-Masur \cite[Lemma 1.5.5]{KM96})
Let $\nu$ be a Borel probability measure on $\partial CV_N$. Let $(g_n)_{n\in\mathbb{N}}\in \text{Out}(F_N)^{\mathbb{N}}$ be a sequence of elements in $\text{Out}(F_N)$ such that $g_n\nu$ converges weakly to a probability measure $\lambda$ on $\partial CV_N$. If there is a Borel subset $E\subseteq \partial CV_N$ with $\nu(E)=1$ and a closed subset $\Omega\subseteq \partial CV_N$ that contains all limit points of sequences $(g_nT)_{n\in\mathbb{N}}$ for $T\in E$, then the measure $\lambda$ is supported on $\Omega$.
\end{lemma}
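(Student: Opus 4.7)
The goal is to establish $\lambda(\Omega) = 1$, which I attack by approximating $\Omega$ from outside by open neighborhoods. Since $\overline{CV_N}$ is a compact metrizable space (being a closed subspace of the metrizable space $\mathbb{P}(\mathbb{R}^{F_N})$), I will fix a metric $d$ on $\partial CV_N$ and consider, for each $\epsilon > 0$, the open $\epsilon$-neighborhood $V_\epsilon := \{x \in \partial CV_N : d(x,\Omega) < \epsilon\}$. Because $\Omega$ is closed, $\bigcap_{k \ge 1} \overline{V_{1/k}} = \Omega$, so it will be enough to prove $\lambda(\overline{V_{1/k}}) = 1$ for every $k$.

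The first key step is a pointwise statement: for every $T \in E$ and every $\epsilon > 0$, one has $g_n T \in V_\epsilon$ for all sufficiently large $n$. This is a routine compactness argument. If on the contrary some subsequence $(g_{n_k} T)_k$ lay entirely in the closed set $\partial CV_N \setminus V_\epsilon$, compactness of $\overline{CV_N}$ would produce a further convergent sub-subsequence whose limit would be a limit point of $(g_n T)_n$ lying outside $V_\epsilon$, hence outside $\Omega$, contradicting the hypothesis on $T$.

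Next I will promote this to a statement about measures. Since $\nu(E) = 1$, the bounded convergence theorem gives
\[
(g_n \nu)(V_\epsilon) \;=\; \int \mathbf{1}_{V_\epsilon}(g_n T)\, d\nu(T) \;\longrightarrow\; 1.
\]
Weak convergence $g_n \nu \to \lambda$, together with the Portmanteau theorem applied to the closed set $\overline{V_\epsilon}$, then yields
\[
\lambda(\overline{V_\epsilon}) \;\ge\; \limsup_n (g_n \nu)(\overline{V_\epsilon}) \;\ge\; \lim_n (g_n \nu)(V_\epsilon) \;=\; 1.
\]
Taking $\epsilon = 1/k$ and using continuity of $\lambda$ along the decreasing sequence $(\overline{V_{1/k}})_k$ gives $\lambda(\Omega) = \lim_k \lambda(\overline{V_{1/k}}) = 1$.

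There is no substantial obstacle here; the content of the lemma lies in converting a pointwise ($\nu$-a.e.) statement about limit points of the orbit $(g_n T)_n$ into a statement about the weak limit $\lambda$. The one point to watch is that the limit-point hypothesis, together with compactness of $\overline{CV_N}$, is exactly what is needed to force $g_n T$ into arbitrarily small neighborhoods of $\Omega$ for $T \in E$; everything else is a standard packaging of Portmanteau with dominated convergence.
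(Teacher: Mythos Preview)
Your proof is correct and follows essentially the same approach as the paper's: both use compactness to force $g_nT$ into any open neighborhood of $\Omega$ for $T\in E$, pass to measures, apply the Portmanteau inequality for the closure, and then intersect over a decreasing sequence of $\epsilon$-neighborhoods of $\Omega$. The only cosmetic difference is that the paper phrases the passage from pointwise convergence to convergence of measures via a cutoff argument (choosing $N$ so that $\nu(\{T:n(T)\le N\})\ge 1-\epsilon$), whereas you invoke the bounded convergence theorem directly; these are equivalent.
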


\begin{proof}
Let $U\subseteq \partial CV_N$ be an open subset that contains $\Omega$. Compactness of $\partial CV_N$ implies the existence for all $T\in E$ of an integer $n(T)$ such that for all $n\ge n(T)$, we have $g_nT\in U$. Let $\epsilon>0$. As $\nu(E)=1$, there exists an integer $N\in\mathbb{N}$ such that $\nu(\{T|n(T)\le N\})\ge 1-\epsilon$. This implies that for all $n\ge N$, we have $g_n\nu(U)\ge 1-\epsilon$, and therefore $g_n\nu(\overline{U})\ge 1-\epsilon$. As $\overline{U}$ is a closed set, weak convergence of the measures $g_n\nu$ implies that $\lambda(\overline{U})\ge 1-\epsilon$, see \cite[Theorem 2.1]{Bil68}. Therefore, we get that for all open neighborhoods $U$ of $\Omega$, we have $\lambda(\overline{U})=1$. By letting $U_n$ be the $\frac{1}{n}$-regular neighborhood of $\Omega$ for any metric on $\partial CV_N$, as $\Omega$ is a closed set, we have

\begin{displaymath}
\Omega=\bigcap_{n\in\mathbb{N}} \overline{U_n},
\end{displaymath}

\noindent which implies that $\lambda(\Omega)=1$.
\end{proof}

To prove Proposition \ref{limit-points-2}, we need to understand possible limit points of sequences in $\overline{CV_N}$. Let $\ast_{CV_N}\in CV_N$. Let $\eta_0\in \mathbb{P}Curr_N$ be such that for all $T\in\overline{CV_N}$, we have $\langle T,\eta_0\rangle >0$ (take for example a basis $\{x_1,\dots,x_N\}$ of $F_N$, and let $\eta_0:=[x_1]+\dots+[x_N]+[x_1x_2]+\dots+[x_1x_N]$). Let $(h_n)_{n\in\mathbb{N}}\in\text{Out}(F_N)^{\mathbb{N}}$, and let $\eta\in\mathbb{P}Curr_N$. The pair $((h_n)_{n\in\mathbb{N}},\eta)$ is \emph{universally converging} if

\begin{itemize}
\item the sequence $(h_n\ast_{CV_N})_{n\in\mathbb{N}}$ converges (projectively) to a tree $T_{\infty}\in\overline{CV_N}$ such that $\langle T_{\infty},\eta\rangle=0$, and
\item the sequence $(h_n^{-1}\eta_0)_{n\in\mathbb{N}}$ converges (projectively) to a current $\eta_0^{\infty}\in\mathbb{P}Curr_N$, and
\item the sequence $(h_n^{-1}\eta)_{n\in\mathbb{N}}$ converges (projectively) to a current $\eta^{\infty}\in\mathbb{P}Curr_N$.
\end{itemize}

\noindent The following lemma follows from compactness of $\mathbb{P}Curr_N$.

\begin{lemma} \label{universal-subsequence}
Let $(g_n)_{n\in\mathbb{N}}$ be an unbounded sequence of elements of $\text{Out}(F_N)$, let $T_{\infty}\in\partial CV_N$ be a limit point of $(g_n\ast_{CV_N})_{n\in\mathbb{N}}$, and $\eta\in Dual(T_{\infty})$. Then there exists a subsequence $(h_n)_{n\in\mathbb{N}}$ of $(g_n)_{n\in\mathbb{N}}$ such that the pair $((h_n)_{n\in\mathbb{N}},\eta)$ is universally converging.
\qed
\end{lemma}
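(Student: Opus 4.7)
The plan is to iterate compactness arguments, extracting three successive subsequences, one for each of the three convergence requirements in the definition of a universally converging pair.

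First, since $T_\infty\in\partial CV_N$ is by hypothesis a limit point of the sequence $(g_n\ast_{CV_N})_{n\in\mathbb{N}}$, I would pass to a subsequence $(g_{\varphi(n)})_{n\in\mathbb{N}}$ of $(g_n)_{n\in\mathbb{N}}$ along which $g_{\varphi(n)}\ast_{CV_N}$ converges projectively to $T_\infty$ in $\overline{CV_N}$. The condition $\langle T_\infty,\eta\rangle = 0$ required in the first bullet of the definition of ``universally converging'' is then automatic from the hypothesis $\eta\in\text{Dual}(T_\infty)$, so the first bullet is satisfied for this subsequence.

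Next, since $\mathbb{P}Curr_N$ is compact (by \cite[Proposition 2.5]{Kap06}, as recalled in Section \ref{sec-laminations}), the sequence $(g_{\varphi(n)}^{-1}\eta_0)_{n\in\mathbb{N}}$ admits a convergent subsequence. Extracting further, I obtain a subsequence $(g_{\varphi\circ\psi(n)})_{n\in\mathbb{N}}$ of $(g_{\varphi(n)})_{n\in\mathbb{N}}$ along which $(g_{\varphi\circ\psi(n)}^{-1}\eta_0)_{n\in\mathbb{N}}$ projectively converges to some current $\eta_0^\infty\in\mathbb{P}Curr_N$. Along this sub-subsequence, both the first and second bullets in the definition hold simultaneously, the first because it is preserved under passage to a subsequence.

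Finally, applying compactness of $\mathbb{P}Curr_N$ once more to the sequence $(g_{\varphi\circ\psi(n)}^{-1}\eta)_{n\in\mathbb{N}}$, I extract a further subsequence $(h_n)_{n\in\mathbb{N}}$ of $(g_{\varphi\circ\psi(n)})_{n\in\mathbb{N}}$ along which $(h_n^{-1}\eta)_{n\in\mathbb{N}}$ converges projectively to a current $\eta^\infty\in\mathbb{P}Curr_N$. This sequence $(h_n)_{n\in\mathbb{N}}$ is a subsequence of the original $(g_n)_{n\in\mathbb{N}}$, all three convergence conditions hold, and the vanishing $\langle T_\infty,\eta\rangle = 0$ holds by hypothesis; so the pair $((h_n)_{n\in\mathbb{N}},\eta)$ is universally converging. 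There is no real obstacle here: the whole argument is a triple application of sequential compactness (of $\overline{CV_N}$ and of $\mathbb{P}Curr_N$), and no further property of $\eta$ or of the $g_n$ is needed beyond what is built into the hypotheses.
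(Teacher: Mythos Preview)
Your proof is correct and follows exactly the approach the paper has in mind: the paper's proof consists of the single sentence ``The following lemma follows from compactness of $\mathbb{P}Curr_N$,'' and your three successive extractions (one from the limit-point hypothesis, two from compactness of $\mathbb{P}Curr_N$) simply spell this out in detail.
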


\noindent Given two projective currents $\eta_1,\eta_2\in\mathbb{P}Curr_N$, we define

\begin{displaymath}
E(\eta_1,\eta_2):=\{T\in\widetilde{\mathcal{FI}}|\langle T,\eta_1\rangle\neq 0 \text{~and~}\langle T,\eta_2\rangle\neq 0\}.
\end{displaymath} 

\begin{lemma} \label{E-measurable}
Let $\mu$ be a probability measure on $\text{Out}(F_N)$, whose support generates a nonelementary subgroup of $\text{Out}(F_N)$. For all $\eta_1,\eta_2\in\mathbb{P}Curr_N$ and all $\mu$-stationary measures $\nu$ on $\partial CV_N$, the set $E(\eta_1,\eta_2)$ is measurable and has full $\nu$-measure.
\end{lemma}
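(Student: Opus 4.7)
The plan is to deduce the lemma directly from the arational duality theorem together with the nonatomicity statements established earlier in the section.

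For measurability, I would write $E(\eta_1,\eta_2) = \widetilde{\mathcal{FI}} \cap U_1 \cap U_2$, where $U_i := \{T\in\overline{CV_N} \mid \langle T,\eta_i\rangle \neq 0\}$. By Theorem \ref{intersection-form}, the intersection form is continuous, so each $U_i$ is open in $\overline{CV_N}$, hence Borel. Since $\widetilde{\mathcal{FI}}$ was shown to be measurable in the paragraph preceding Proposition \ref{stationary-on-outer-space}, the intersection $E(\eta_1,\eta_2)$ is measurable.

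For full $\nu$-measure, first note that by Proposition \ref{stationary-on-outer-space}, the $\mu$-stationary measure $\nu$ is concentrated on $\widetilde{\mathcal{FI}}$. Therefore it suffices to show that, for $i\in\{1,2\}$, the set
\[
Z_i := \{T\in\widetilde{\mathcal{FI}} \mid \langle T,\eta_i\rangle = 0\}
\]
has $\nu$-measure zero. Here the condition $\langle T,\eta_i\rangle=0$ depends only on the projective class of $\eta_i$, so it makes sense. If $Z_i$ is empty there is nothing to prove. Otherwise, pick any $T_0\in Z_i$; then for every other $T\in Z_i$, both trees annihilate a common current representing the projective class $\eta_i$, so Theorem \ref{arational-duality} applies and gives $T\sim T_0$. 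Hence $Z_i$ is contained in a single $\sim$-equivalence class.

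Finally, Proposition \ref{nonatomic} asserts that every $\sim$-class in $\widetilde{\mathcal{AT}}$ has $\nu$-measure $0$, so $\nu(Z_i)=0$ for $i=1,2$. Combining, $\nu(E(\eta_1,\eta_2)) = \nu(\widetilde{\mathcal{FI}}) - \nu(Z_1\cup Z_2) = 1$. There is no serious obstacle here: the entire argument is a bookkeeping exercise that packages Theorem \ref{arational-duality} (which forces the vanishing locus of any current on $\widetilde{\mathcal{FI}}$ into one $\sim$-class) together with the nonatomicity of $\nu$ on $\sim$-classes supplied by Proposition \ref{nonatomic}.
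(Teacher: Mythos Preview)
Your proof is correct and follows essentially the same approach as the paper: measurability via the measurability of $\widetilde{\mathcal{FI}}$ and continuity of the intersection form, and full measure via Theorem~\ref{arational-duality} (forcing each $Z_i$ into a single $\sim$-class) together with Proposition~\ref{nonatomic}. The paper's version is simply a more compressed statement of exactly this argument.
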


\begin{proof}
Measurability of $E(\eta_1,\eta_2)$ comes from measurability of $\widetilde{\mathcal{FI}}$ and continuity of the intersection form (Theorem \ref{intersection-form}). By Proposition \ref{stationary-on-outer-space}, we have $\nu(\widetilde{\mathcal{FI}})=1$. Theorem \ref{arational-duality} implies that $\widetilde{\mathcal{FI}}\smallsetminus E(\eta_1,\eta_2)$ consists of at most two classes of the relation $\sim$. As each of these classes has $\nu$-measure $0$ (Proposition \ref{nonatomic}), we get that $\nu(E(\eta_1,\eta_2))=1$. 
\end{proof}

\begin{lemma} \label{limit-points}
Let $((h_n)_{n\in\mathbb{N}},\eta)$ be a universally converging pair, and $T'\in CV_N\cup E(\eta^{\infty},\eta_0^{\infty})$. If $(h_nT')_{n\in\mathbb{N}}$ converges to a tree $T'_{\infty}\in\overline{CV_N}$, then $\langle T'_{\infty},\eta\rangle=0$.
\end{lemma}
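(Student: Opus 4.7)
The plan is to study the ratio
\[
R_n(T'') := \frac{\langle h_n T'', \eta\rangle}{\langle h_n T'', \eta_0\rangle}
\]
for varying $T'' \in \overline{CV_N}$. Its denominator is positive because $\langle T, \eta_0\rangle > 0$ for every $T \in \overline{CV_N}$ by the very choice of $\eta_0$, and the ratio is projectively invariant in the first slot. Using $\text{Out}(F_N)$-invariance of the intersection form (Theorem \ref{intersection-form}), I would rewrite it as $R_n(T'') = \langle T'', h_n^{-1}\eta\rangle / \langle T'', h_n^{-1}\eta_0\rangle$. By the projective convergence $h_nT' \to T'_\infty$, continuity of the intersection form, and its $\mathbb{R}_+$-homogeneity, $R_n(T')$ converges to $\langle T'_\infty, \eta\rangle / \langle T'_\infty, \eta_0\rangle$, whose limit denominator is positive; it therefore suffices to show $R_n(T') \to 0$.

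To exploit the universal convergence, I would choose positive scalars $\alpha_n, \beta_n$ with $\alpha_n h_n^{-1}\eta_0 \to \tilde\eta_0^\infty$ and $\beta_n h_n^{-1}\eta \to \tilde\eta^\infty$ in $Curr_N$, where $\tilde\eta_0^\infty, \tilde\eta^\infty$ are chosen representatives of the projective limits. Then
\[
R_n(T'') = \frac{\alpha_n}{\beta_n}\cdot \frac{\langle T'', \beta_n h_n^{-1}\eta\rangle}{\langle T'', \alpha_n h_n^{-1}\eta_0\rangle}.
\]
Applied to $T'' = \ast_{CV_N}$, the second factor converges to $\langle \ast_{CV_N}, \tilde\eta^\infty\rangle / \langle \ast_{CV_N}, \tilde\eta_0^\infty\rangle$, a positive real by Theorem \ref{currents-duality} since $\ast_{CV_N} \in CV_N$. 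On the other hand, $R_n(\ast_{CV_N}) \to \langle T_\infty, \eta\rangle / \langle T_\infty, \eta_0\rangle = 0$ by the defining property $\langle T_\infty, \eta\rangle = 0$ of universal convergence. Hence $\alpha_n/\beta_n \to 0$.

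Returning to $T'$, the same formula shows that the second factor of $R_n(T')$ converges to $\langle T', \tilde\eta^\infty\rangle / \langle T', \tilde\eta_0^\infty\rangle$. If $T' \in CV_N$, both intersections are positive by Theorem \ref{currents-duality}. If instead $T' \in E(\eta^\infty, \eta_0^\infty)$, both are nonzero directly from the definition of this set. In either case the second factor has a finite, nonzero limit, so the product $R_n(T') = (\alpha_n/\beta_n)\cdot(\text{second factor})$ tends to $0$, yielding $\langle T'_\infty, \eta\rangle = 0$. The only real subtlety is handling $T'$ on the boundary, where Theorem \ref{currents-duality} is not directly available to ensure nonvanishing of $\langle T', \tilde\eta_0^\infty\rangle$; this is precisely what the definition of $E(\eta^\infty, \eta_0^\infty)$ is designed to encode, ruling out an indeterminate form in the limit.
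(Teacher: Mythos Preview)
Your proof is correct and follows essentially the same approach as the paper's. Both arguments compare the intersection numbers along the two sequences $h_n\ast_{CV_N}\to T_\infty$ and $h_nT'\to T'_\infty$ after rescaling the currents $h_n^{-1}\eta_0$ and $h_n^{-1}\eta$; the paper introduces separate scaling sequences $t_n,t'_n$ for the trees and $\lambda_n^0,\lambda_n$ for the currents and tracks the four cross-ratios, while you package the same computation into the single projectively invariant ratio $R_n(T'')$, which makes the argument slightly cleaner but is otherwise identical in substance.
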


\begin{proof}
Let $T_{\infty}$ be the limit of the sequence $(h_n\ast_{CV_N})_{n\in\mathbb{N}}$. We choose lifts of $T_{\infty}$, $T'_{\infty}$ and $T'$ to the unprojectivized outer space $\overline{cv_N}$, which we again denote by $T_{\infty}$, $T'_{\infty}$ and $T'$, slightly abusing notations, and we denote by $\ast_{cv_N}$ a lift of $\ast_{CV_N}$ to $cv_N$. There exist sequences $(t_n)_{n\in\mathbb{N}}$ and $(t'_n)_{n\in\mathbb{N}}$ of positive real numbers such that $t_nh_n\ast_{cv_N}$ converges to $T_{\infty}$, and $t'_nh_nT'$ converges to $T'_{\infty}$. Similarly, as $((h_n)_{n\in\mathbb{N}},\eta)$ is universally converging, there exist sequences $(\lambda_n^0)_{n\in\mathbb{N}}$ and $(\lambda_n)_{n\in\mathbb{N}}$ such that $\lambda_n^0h_n^{-1}\eta_0$ converges (non-projectively) to $\eta_0^{\infty}$, and $\lambda_nh_n^{-1}\eta$ converges (non-projectively) to $\eta^{\infty}$. We have $\langle \ast_{cv_N},\eta_0^{\infty}\rangle\neq 0$ (Theorem \ref{currents-duality}). By continuity and $\text{Out}(F_N)$-invariance of the intersection form (Theorem \ref{intersection-form}), we have

\begin{displaymath}
\frac{\langle T_{\infty},\eta_0\rangle}{\langle \ast_{cv_N},\eta_0^{\infty}\rangle}=\frac{\lim_{n\to +\infty} t_n\langle h_n\ast_{cv_N},\eta_0\rangle}{\lim_{n\to +\infty} \lambda_n^{0}\langle \ast_{cv_N},h_n^{-1}\eta_0\rangle}=\lim_{n\to +\infty}\frac{t_n}{\lambda_n^{0}},
\end{displaymath}

\noindent and similarly, as $T'\in CV_N\cup E(\eta^{\infty},\eta_0^{\infty})$, we have 

\begin{displaymath}
\frac{\langle T'_{\infty},\eta_0\rangle}{\langle T',\eta_0^{\infty}\rangle}=\frac{\lim_{n\to +\infty} t'_n\langle h_nT',\eta_0\rangle}{\lim_{n\to +\infty} \lambda_n^{0}\langle T',h_n^{-1}\eta_0\rangle}=\lim_{n\to +\infty}\frac{t'_n}{\lambda_n^{0}}.
\end{displaymath}

We have $\langle T_{\infty},\eta_0\rangle>0$ and $\langle T'_{\infty},\eta_0\rangle>0$, so the limits $\lim_{n\to +\infty}\frac{t_n}{\lambda_n^{0}}$ and $\lim_{n\to +\infty}\frac{t'_n}{\lambda_n^{0}}$ are both finite and non-zero, and hence there exists a non-zero limit $\lim_{n\to +\infty}\frac{t_n}{t'_n}$. As $\langle \ast_{cv_N},\eta^{\infty}\rangle\neq 0$ and $T'\in CV_N\cup E(\eta^{\infty},\eta_0^{\infty})$, the same argument as above also shows that

\begin{displaymath}
0=\frac{\langle T_{\infty},\eta\rangle}{\langle \ast_{cv_N},\eta^{\infty}\rangle}=\lim_{n\to +\infty}\frac{t_n}{\lambda_n},
\end{displaymath}

\noindent and

\begin{displaymath}
\frac{\langle T'_{\infty},\eta\rangle}{\langle T',\eta^{\infty}\rangle}=\lim_{n\to +\infty}\frac{t'_n}{\lambda_n},
\end{displaymath}

\noindent thus proving that $\langle T'_{\infty},\eta\rangle=0$.
\end{proof}

\begin{proof}[Proof of Proposition \ref{limit-points-2}]
As $(g_n)_{n\in\mathbb{N}}$ is unbounded, the sequence $(g_n\ast_{CV_N})_{n\in\mathbb{N}}$ has a limit point  $T_{\infty}\in\partial CV_N$. Theorem \ref{currents-duality} and Lemma \ref{universal-subsequence} provide a current $\eta\in\text{Dual}(T_{\infty})$, and a subsequence $(h_n)_{n\in\mathbb{N}}$ of $(g_n)_{n\in\mathbb{N}}$ such that the pair $((h_n)_{n\in\mathbb{N}},\eta)$ is universally converging. By Lemma \ref{E-measurable}, the set $E(\eta^{\infty},\eta_0^{\infty})$ is measurable and $\nu(E(\eta^{\infty},\eta_0^{\infty}))=1$, and by Lemma \ref{limit-points}, all limit points of sequences $(h_nT')_{n\in\mathbb{N}}$ for $T'\in E(\eta^{\infty},\eta_0^{\infty})$ belong to the closed set $\widetilde{\text{Dual}}(\eta):=\{T\in\overline{CV_N}|\langle T,\eta\rangle=0\}$. Lemma \ref{KM-limit-points} shows that $\lambda$ is concentrated on $\widetilde{\text{Dual}}(\eta)$. If $T_{\infty}\in\widetilde{\mathcal{FI}}$, Theorem \ref{arational-duality} implies that $\widetilde{\text{Dual}}(\eta)$ is contained in $\widetilde{\mathcal{FI}}$, in a single class of the relation $\sim$. If $T_{\infty}\in\partial CV_N\smallsetminus\widetilde{\mathcal{FI}}$, then Theorem \ref{arational-duality} implies that for all $T\in\widetilde{\mathcal{FI}}$, we have $\langle T,\eta\rangle\neq 0$. Hence $\widetilde{\text{Dual}}(\eta)\subseteq\partial CV_N\smallsetminus\widetilde{\mathcal{FI}}$, so $\lambda$ is concentrated on $\partial CV_N\smallsetminus\widetilde{\mathcal{FI}}$. 

Now let $T\in CV_N$, and let $T'_{\infty}$ be a limit point of the sequence $(g_nT)_{n\in\mathbb{N}}$. In other words, there exists an unbounded subsequence $(h'_n)_{n\in\mathbb{N}}$ of $(g_n)_{n\in\mathbb{N}}$ such that the sequence $(h'_nT)_{n\in\mathbb{N}}$ converges to $T'_{\infty}$, and up to passing to a subsequence again, we may assume that the sequence $(h'_n\ast_{CV_N})_{n\in\mathbb{N}}$ converges to a tree $T''_{\infty}\in\partial CV_N$. Notice that $T''_{\infty}\in \widetilde{\mathcal{FI}}$ if and only if $T_{\infty}\in \widetilde{\mathcal{FI}}$, and in this case they belong to the same $\sim$-class, otherwise the above argument applied to both $T_{\infty}$ and $T''_{\infty}$ would imply that $\lambda$ is simultaneously supported on two disjoint measurable sets. The last part of the claim then follows from Lemma \ref{limit-points}.
\end{proof}

\subsection{Uniqueness of the stationary measure on $\mathcal{FI}$}

Let $\mu$ be a probability measure on $\text{Out}(F_N)$, whose support generates a nonelementary subgroup of $\text{Out}(F_N)$. Given a sample path $(g_n)_{n\in\mathbb{N}}$ of the random walk on $(\text{Out}(F_N),\mu)$, we denote by $\xi(\mathbf{g})\in\mathcal{FI}$ the limit of any sequence $(g_n T_0)_{n\in\mathbb{N}}$ (with $T_0\in CV_N$), which $\mathbb{P}$-almost surely exists and is independent from $T_0$ by Theorem \ref{convergence-drift}. The \emph{hitting measure} $\nu$ on $\mathcal{FI}$ is the $\mu$-stationary measure defined by letting

\begin{displaymath}
\nu(X):=\mathbb{P}(\xi(\mathbf{g})\in X)
\end{displaymath}

\noindent for all Borel subsets $X\subseteq\mathcal{FI}$.

\begin{prop}\label{unique}
Let $\mu$ be a probability measure on $\text{Out}(F_N)$, whose support generates a nonelementary subgroup of $\text{Out}(F_N)$. Then the hitting measure is the unique $\mu$-stationary measure on $\mathcal{FI}$.
\end{prop}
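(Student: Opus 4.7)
The plan is to combine Proposition~\ref{convergence-cvn} with Furstenberg's decomposition Lemma~\ref{decomposition-measure}. Proposition~\ref{convergence-cvn} tells us that, for any $\mu$-stationary probability measure on $\partial CV_N$, the weak limit of its $g_n$-translates is concentrated on a single $\sim$-class $\Delta(\mathbf{g})$ that depends only on the sample path. Projected down to $\mathcal{FI}$, this limit becomes a Dirac mass at $\xi(\mathbf{g})$. Averaging over $\mathbb{P}$ will then force the original measure to coincide with the hitting measure $\nu$.

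To execute this for an arbitrary $\mu$-stationary probability measure $\nu'$ on $\mathcal{FI}$, I would first lift $\nu'$ to a $\mu$-stationary probability measure $\tilde\nu'$ on the compact space $\overline{CV_N}$. To each $\sim$-class $\sigma \in \mathcal{FI}$ associate the uniform probability measure $\mu_\sigma$ on its finite set $\mathrm{Ext}(\sigma)\subseteq\partial CV_N$ of extremal points (finite by \cite[Corollary~5.4]{Gui00}, as used in the proof of Proposition~\ref{existence-stationary-ff}), and set
\[
\tilde\nu' \;:=\; \int_{\mathcal{FI}} \mu_\sigma \, d\nu'(\sigma).
\]
Since the $\text{Out}(F_N)$-action permutes extremal points, $g_*\mu_\sigma = \mu_{g\sigma}$, so the lifting $\nu''\mapsto \widetilde{\nu''}$ from $\mathcal{FI}$ to $\partial CV_N$ satisfies $g_*\widetilde{\nu''}=\widetilde{g_*\nu''}$, and averaging over $\mu$ promotes $\mu$-stationarity of $\nu'$ to $\mu$-stationarity of $\tilde\nu'$. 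Moreover, for the canonical projection $\pi : \widetilde{\mathcal{FI}} \to \mathcal{FI}$ one has $\pi_*\mu_\sigma=\delta_\sigma$, hence $\pi_*\tilde\nu'=\nu'$.

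Applying Proposition~\ref{convergence-cvn} to $\tilde\nu'$, for $\mathbb{P}$-a.e.\ sample path $\mathbf{g}$ the measures $g_n\tilde\nu'$ weakly converge to a probability measure $\tilde\lambda(\mathbf{g})$ supported on the single $\sim$-class $\Delta(\mathbf{g})$, whose image in $\mathcal{FI}$ is $\xi(\mathbf{g})$. Lemma~\ref{decomposition-measure} yields $\tilde\nu' = \int \tilde\lambda(\mathbf{g}) \, d\mathbb{P}(\mathbf{g})$, and pushing forward under the equivariant projection $\pi$ gives
\[
\nu' \;=\; \pi_*\tilde\nu' \;=\; \int_{G^{\mathbb{N}}} \pi_*\tilde\lambda(\mathbf{g}) \, d\mathbb{P}(\mathbf{g}) \;=\; \int_{G^{\mathbb{N}}} \delta_{\xi(\mathbf{g})} \, d\mathbb{P}(\mathbf{g}) \;=\; \nu,
\]
which is the desired equality.

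The one technical point where care is needed is the measurability of the assignment $\sigma\mapsto\mu_\sigma$, required for $\tilde\nu'$ to be a well-defined Borel measure on $\partial CV_N$. This should follow from the fact that $\sim$-classes are compact subsets of $\partial CV_N$ varying Borel-measurably in $\sigma$ (implicit in Section~\ref{sec-arational}), together with Borel measurability of the extreme-point operator on compact convex subsets of $\overline{CV_N}$; if this turns out to be delicate, one can perform an analogous lifting into the compact space $\mathbb{P}Curr_N$ using the finite sets $\mathrm{Erg}(\sigma)$ and argue dually via Theorem~\ref{arational-duality}.
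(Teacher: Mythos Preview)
Your argument is correct (modulo the measurability of the section $\sigma\mapsto\mu_\sigma$, which you rightly flag), and it takes a genuinely different route from the paper's proof.

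The paper argues directly on $\mathcal{FI}$: given a $\mu$-stationary $\nu$ on $\mathcal{FI}$, it uses nonatomicity and (the argument behind) Lemma~\ref{limit-points} to show that for $\mathbb{P}$-a.e.\ $\mathbf{g}$ and $\nu$-a.e.\ $x$, one has $g_nx\to\xi(\mathbf{g})$ in $\mathcal{FI}$; this gives weak convergence $g_n\nu\to\delta_{\xi(\mathbf{g})}$. Since $\mathcal{FI}$ is not known to be compact, Lemma~\ref{decomposition-measure} does not apply directly; instead the paper uses stationarity $\nu(U)=\int g_n\nu(U)\,d\mathbb{P}$ together with the Portmanteau theorem and Fatou's lemma to get $\nu(U)\ge\int\delta_{\xi(\mathbf{g})}(U)\,d\mathbb{P}$ for every open $U$, and concludes by regularity that $\nu$ equals the hitting measure. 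Your approach bypasses this Portmanteau/Fatou step by manufacturing an equivariant lift of $\nu'$ to the compact space $\overline{CV_N}$, where Lemma~\ref{decomposition-measure} and Proposition~\ref{convergence-cvn} apply verbatim, and then pushing forward. The trade-off is clean: you replace the soft Portmanteau inequality argument by a structural lifting, at the cost of having to verify Borel measurability of $\sigma\mapsto\mu_\sigma$ (which is indeed provable along the lines you sketch, via measurable selection from the finite sets of extremal length measures, but is not entirely free).
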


\begin{proof}
Let $\nu$ be a $\mu$-stationary measure on $\mathcal{FI}$. For $\mathbf{g}\in G^{\mathbb{N}}$, let $\lambda(\mathbf{g})$ be the Dirac measure on $\xi(\mathbf{g})$. As $\nu$ is purely nonatomic (Proposition \ref{existence-stationary-ff}), Lemma \ref{limit-points} shows that for $\mathbb{P}$-a.e. sample path $\mathbf{g}\in G^{\mathbb{N}}$ of the random walk, and $\nu$-a.e. $x\in\mathcal{FI}$, the sequence $(g_nx)_{n\in\mathbb{N}}$ converges to $\xi(\mathbf{g})$. So for all bounded continuous functions $F$ on $\mathcal{FI}$, the integrals 

\begin{displaymath}
\int_{\mathcal{FI}}F(g_nx)d\nu(x)
\end{displaymath}

\noindent converge to $F(\xi(\mathbf{g}))$ as $n$ goes to $+\infty$, thus showing that the measures $g_n\nu$ weakly converge to $\lambda(\mathbf{g})$. In other words, for $\mathbb{P}$-a.e. sample path $\mathbf{g}\in G^{\mathbb{N}}$ of the random walk, and all open subsets $U\subseteq\mathcal{FI}$, we have

\begin{displaymath}
\liminf_{n\to +\infty}g_n\nu(U)\ge \lambda(\mathbf{g})(U)
\end{displaymath}

\noindent by the Portmanteau Theorem (see \cite[Theorem 2.1]{Bil68}). For all $n\in\mathbb{N}$, the measure $\nu$ is $\mu^{\ast n}$-stationary, so for all $n\in\mathbb{N}$ and all open subsets $U\subseteq\mathcal{FI}$, we have

\begin{displaymath}
\nu(U)=\int_{G^{\mathbb{N}}}g_n\nu(U)d\mathbb{P}(\mathbf{g}).
\end{displaymath} 

\noindent  Hence 

\begin{displaymath}
\nu(U)\ge\left(\int_{G^{\mathbb{N}}}\lambda(\mathbf{g})d\mathbb{P}(\mathbf{g})\right)(U),
\end{displaymath}

\noindent and $\int_{G^{\mathbb{N}}}\lambda(\mathbf{g})d\mathbb{P}(\mathbf{g})$ is the hitting measure on $\mathcal{FI}$. Regularity of $\nu$ and of the hitting measure \cite[Theorem 1.1]{Bil68} implies that the inequality holds true for all Borel subsets of $\mathcal{FI}$. As both $\nu$ and the hitting measure are probability measures on $\mathcal{FI}$, they are equal.
\end{proof}

\section{The Poisson boundary of $\text{Out}(F_N)$} \label{sec-Poisson}

This section is devoted to the description of the Poisson boundary of $\text{Out}(F_N)$. We start by recalling the construction of the Poisson boundary of a finitely generated group equipped with a probability measure.

\subsection{Generalities on Poisson boundaries and statement of the main result}\label{sec-Poisson-general}

Let $G$ be a finitely generated group, and $\mu$ be a probability measure on $G$. The \emph{Poisson boundary} of $(G,\mu)$ is the space of ergodic components of the \emph{time shift} $T$, defined in the path space of the random walk on $(G,\mu)$ by $(T\textbf{g})_n=g_{n+1}$. More precisely, let $\mathcal{A}_T$ be the $\sigma$-algebra of all $T$-invariant measurable subsets of the path space $G^{\mathbb{N}}$, and let $\overline{\mathcal{A}_T}$ be its completion with respect to the measure $\mathbb{P}_m=\sum_{g\in G}g\mathbb{P}$ corresponding to the distribution of the sample paths of a random walk whose initial distribution is the counting measure on $G$. We recall that $\mathcal{A}$ denotes the $\sigma$-algebra on the path space $G^{\mathbb{N}}$ generated by the cylinder subsets. Let $\overline{\mathcal{A}}$ be its completion with respect to the measure $\mathbb{P}_m$. Since $(G^{\mathbb{N}},\overline{\mathcal{A}},\mathbb{P}_m)$ is a Lebesgue space, the Rokhlin correspondence associates to $\overline{\mathcal{A}_T}$ a measurable partition $\eta$ of $G^{\mathbb{N}}$, see \cite{Rok49} (we recall that a partition of a measurable space into measurable subsets is \emph{measurable} if it is countably separated). This partition is unique in the sense that if $\eta$ and $\eta'$ are two such partitions, then there exists a subset of $G^{\mathbb{N}}$ of full $\mathbb{P}_m$-measure on which they coincide. We call it the \emph{Poisson partition} of $G^{\mathbb{N}}$. The quotient space $\Gamma:=(G^{\mathbb{N}},\overline{\mathcal{A}})/\eta$ carries several measures. On the one hand, it can be equipped with the image $\nu_m$ of the measure $\mathbb{P}_m$ under the quotient map, and $(\Gamma,\nu_m)$ is a Lebesgue space. On the other hand, it can be equipped with the \emph{harmonic measure} $\nu$, which is the image of $\mathbb{P}$ under the quotient map. The measure $\nu$ is $\mu$-stationary, and the measure $\nu_m$ can be recovered from $\nu$ by the formula 

\begin{displaymath}
\nu_m=\sum_{g\in G}g\nu.
\end{displaymath}

\noindent We call $(\Gamma,\nu)$ the \emph{Poisson boundary} of $(G,\mu)$.

A \emph{$\mu$-boundary} is a probability space $(B,\lambda)$, which is the quotient of the path space $(G^{\mathbb{N}},\mathbb{P})$ with respect to some shift-invariant and $G$-invariant measurable partition. Equivalently, a $\mu$-boundary is a probability space which is the quotient of the Poisson boundary with respect to some $G$-invariant measurable partition. So the Poisson boundary is itself a $\mu$-boundary, and it is maximal with respect to this property. Typical examples of $\mu$-boundaries arise when $G$ is embedded into a metric separable $G$-space, and $\mathbb{P}$-a.e. sample path $\textbf{g}$ converges to a limit $bnd(\textbf{g})$.

In \cite{Kai00}, Kaimanovich gave a criterion for checking that a $\mu$-boundary is maximal. Let $d$ be the word metric on $G$ with respect to some finite generating set -- any two such metrics are bi-Lipschitz equivalent. The \emph{first logarithmic moment} of $\mu$ with respect to $d$ is defined (with the convention that $\log 0=0$) as

\begin{displaymath}
|\mu|:=\sum_{g\in G}\log d(e,g)\mu(g).
\end{displaymath}

\noindent The \emph{entropy} of $\mu$ is defined as

\begin{displaymath}
H(\mu):=\sum_{g\in G}-\mu(g)\log \mu(g).
\end{displaymath}

\noindent Given a measure $\mu$ on a countable group $G$, we denote by $\check{\mu}$ the \emph{reflected measure} on $G$ defined by $\check{\mu}(g):=\mu(g^{-1})$ for all $g\in G$. In the following statement, we take the convention that $\log 0=0$. 

\begin{theo} \label{Kaimanovich-criterion} (Kaimanovich \cite[Theorem 6.5]{Kai00})
Let $G$ be a finitely generated group, let $d$ be a word metric on $G$, and let $\mu$ be a probability measure on $G$ which has finite first logarithmic moment with respect to $d$, and finite entropy. Let $(B_-,\nu_-)$ and $(B_+,\nu_+)$ be $\check{\mu}$- and $\mu$-boundaries, respectively, and assume there exists a measurable $G$-equivariant map

\begin{displaymath}
\begin{array}{cccc}
B_-\times B_+&\to & 2^G\\
(b_-,b_+)&\mapsto & S(b_-,b_+)
\end{array}
\end{displaymath} 
 
\noindent such that for $\nu_-\otimes\nu_+$-a.e. $(b_-,b_+)\in B_-\times B_+$, the set $S(b_-,b_+)$ is nonempty, and

\begin{displaymath}
 \sup_{k\in\mathbb{N}\smallsetminus\{0\}}\frac{1}{\log k}\log\text{card} [S(b_-,b_+)\cap \mathcal{B}_k]<+\infty,
\end{displaymath} 
 
\noindent where $\mathcal{B}_k$ denotes the $d$-ball of radius $k$ centered at $e$. Then the boundaries $(B_-,\nu_-)$ and $(B_+,\nu_+)$ are Poisson boundaries.
\end{theo}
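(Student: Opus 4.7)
I would prove Kaimanovich's strip criterion by reducing it to his entropy/ray approximation criterion, which states that a $\mu$-boundary $(B_+,\nu_+)$ is the Poisson boundary provided one can, on the basis of the limit $b_+=bnd_+(\mathbf{g})$ alone, predict the position $g_n$ to within a sublinear error (in the $d$-distance on $G$); equivalently, the conditional entropy $H(g_n\mid B_+)$ should grow sublinearly. Under finite first logarithmic moment and finite entropy, Kingman's subadditive ergodic theorem ensures $d(e,g_n)=O(n)$ almost surely, so one only needs a set $A_n(b_+)$, measurable in $b_+$, that contains $g_n$ and whose cardinality grows at most subexponentially in $n$.

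The plan is to work with the bilateral random walk: couple a forward $\mu$-walk with limit $b_+\in B_+$ and an independent reverse $\check\mu$-walk with limit $b_-\in B_-$, and use the strip $S(b_-,b_+)\subseteq G$ as a pool of candidate locations. The key geometric observation is that $\mathbb{P}$-almost surely, the shifted strip $g_n\cdot S(b_-,b_+)$ intersects a ball of controlled radius around $g_n$: this follows from a Borel--Cantelli / stationarity argument because the joint distribution of $(b_-,b_+)$ is quasi-invariant under the shift $\mathbf{g}\mapsto(g_1^{-1}\mathbf{g}_-,g_1^{-1}\mathbf{g})$, and the strip is $G$-equivariant. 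Thus, up to an error of size $o(n)$ (from the linear drift and a reflected-walk drift estimate), $g_n$ lies within a ball of radius $cn$ of some element of $S(b_-,b_+)$.

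Combining this with the hypothesis
\begin{displaymath}
\sup_{k\ge 1}\frac{1}{\log k}\log\operatorname{card}[S(b_-,b_+)\cap\mathcal{B}_k]<+\infty,
\end{displaymath}
the number of candidates for $g_n$ given $(b_-,b_+)$ is bounded by a polynomial in $n$, so the conditional entropy $H(g_n\mid B_-\times B_+)\le C\log n$. Since the backward walk is independent of the forward increments, $B_-$ contributes no information beyond the past, and a standard manipulation (conditioning on the $\sigma$-algebra generated by $b_-$ and invoking independence of past and future) yields $H(g_n\mid B_+)\le C\log n$ as well. Dividing by $n$ and letting $n\to\infty$ gives vanishing asymptotic conditional entropy, so by the entropy criterion $(B_+,\nu_+)$ is the Poisson boundary of $(G,\mu)$. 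The symmetric argument applied to the reverse walk shows $(B_-,\nu_-)$ is the Poisson boundary of $(G,\check\mu)$.

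The main obstacle I anticipate is justifying the Borel--Cantelli-type step: one must verify that the strip $g_n^{-1}S(b_-,b_+)$ is, with positive probability uniformly in $n$, close to the identity, and then upgrade this to an almost sure statement. This requires carefully exploiting shift-invariance of the bilateral system together with the finite first logarithmic moment (the latter is what controls the error terms logarithmically rather than polynomially and allows the $\log k$ normalization in the growth hypothesis to exactly balance the $\log n$ cost on the entropy side). Once this localization step is in place, the remainder of the argument reduces to a clean application of the Shannon--McMillan--Breiman theorem and the entropy criterion.
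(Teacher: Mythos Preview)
The paper does not contain a proof of this statement at all: Theorem~\ref{Kaimanovich-criterion} is quoted from Kaimanovich's paper \cite[Theorem 6.5]{Kai00} and used as a black box, so there is nothing to compare your proposal against. Your sketch does follow the general strategy of Kaimanovich's original argument (reduction to the ray/entropy criterion via the bilateral walk and strips), which is fine as an expository outline, but for the purposes of this paper no proof is expected or given.
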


In terms of $\mu$-boundaries, Theorem \ref{convergence-drift} can be restated as follows.

\begin{theo}\label{mu-boundary}
Let $\mu$ be a probability measure on $\text{Out}(F_N)$, whose support generates a nonelementary subgroup of $\text{Out}(F_N)$, and let $\nu$ be the hitting measure on $\mathcal{FI}$. Then $(\mathcal{FI},\nu)$ is a $\mu$-boundary.
\end{theo}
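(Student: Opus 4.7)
The plan is to exhibit $(\mathcal{FI},\nu)$ as a $\mu$-boundary by producing the boundary map $\mathrm{bnd}:\mathrm{Out}(F_N)^{\mathbb{N}}\to\mathcal{FI}$ directly from Theorem \ref{convergence-drift} and checking the three properties hidden in the definition recalled in Section \ref{sec-Poisson-general}: measurability, $G$-equivariance, and shift-invariance of the induced partition. Since by definition of the hitting measure $\nu=\mathrm{bnd}_{\ast}\mathbb{P}$, once these three points are verified, the measurable partition of $\mathrm{Out}(F_N)^{\mathbb{N}}$ into fibers of $\mathrm{bnd}$ will exhibit $(\mathcal{FI},\nu)$ as a quotient of $(\mathrm{Out}(F_N)^{\mathbb{N}},\mathbb{P})$, which is precisely what is required.

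First, I would fix a basepoint $T_0\in CV_N$ and invoke Theorem \ref{convergence-drift} to obtain a $\mathbb{P}$-conull subset $\Omega\subseteq\mathrm{Out}(F_N)^{\mathbb{N}}$ on which the sequence $(g_nT_0)_{n\in\mathbb{N}}$ converges to a simplex $\xi(\mathbf{g})\in\mathcal{FI}$. Define $\mathrm{bnd}(\mathbf{g}):=\xi(\mathbf{g})$ on $\Omega$ and extend arbitrarily outside. Measurability of $\mathrm{bnd}$ follows the usual route: the coordinate projections $\mathbf{g}\mapsto g_n$ are measurable, the orbit map $g\mapsto gT_0$ is Borel, and the map sending a convergent sequence in $\overline{CV_N}$ to its limit (composed with the quotient to $\mathcal{FI}$, which is well-defined because classes of $\sim$ are compact by \cite[Lemma 7.1]{BR13}) is Borel on its domain of convergence.

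Next I would verify the two invariance properties. For $G$-equivariance, the left action on the path space by $h\in\mathrm{Out}(F_N)$ sends $\mathbf{g}$ to $(hg_n)_{n\in\mathbb{N}}$, so since $h$ acts continuously on $\overline{CV_N}$ and descends to $\mathcal{FI}$, we get $\mathrm{bnd}(h\mathbf{g})=\lim_{n}hg_nT_0=h\,\mathrm{bnd}(\mathbf{g})$ on $\Omega\cap h^{-1}\Omega$. For shift-invariance, the shift $T$ sends $\mathbf{g}$ to $(g_{n+1})_{n\in\mathbb{N}}$, and the sequence $(g_{n+1}T_0)_{n\in\mathbb{N}}$ obviously has the same limit as $(g_nT_0)_{n\in\mathbb{N}}$, so $\mathrm{bnd}(T\mathbf{g})=\mathrm{bnd}(\mathbf{g})$ on $\Omega\cap T^{-1}\Omega$. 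The partition of the path space into fibers of $\mathrm{bnd}$ is therefore both $G$-invariant (up to the $G$-orbit of a null set, which is still null since $\mu$ is supported on a countable group) and shift-invariant in the required measure-theoretic sense.

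The only nontrivial point is really the measurability setup for $\mathcal{FI}$, i.e.\ checking that the quotient map $\widetilde{\mathcal{FI}}\to\mathcal{FI}$ has a Borel section (or at least that limits of convergent sequences in $CV_N$ descend to a Borel map to $\mathcal{FI}$); this is handled by the fact that $\widetilde{\mathcal{FI}}$ is measurable (as noted before Proposition \ref{stationary-on-outer-space}) and that equivalence classes of $\sim$ are compact. All the hard analytic work — convergence of $(g_nT_0)$ in $\mathcal{FI}$ and independence of the limit from $T_0$ — is already encapsulated in Theorem \ref{convergence-drift}, so the present statement is essentially a repackaging of that theorem in the language of $\mu$-boundaries.
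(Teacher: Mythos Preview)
Your proposal is correct and follows essentially the same route as the paper: define $\mathrm{bnd}$ via Theorem \ref{convergence-drift}, and observe that the fibers give a shift- and $G$-invariant measurable partition pushing $\mathbb{P}$ to $\nu$. The paper leaves the equivariance and shift-invariance implicit (they are immediate from the definition of $\mathrm{bnd}$, as you spell out), and concentrates the one line of actual argument on the ``nontrivial point'' you flag at the end. There, rather than going through Borel sections of $\widetilde{\mathcal{FI}}\to\mathcal{FI}$ and compactness of $\sim$-classes, the paper simply cites \cite[Corollary 7.2]{BR13} to say that $\mathcal{FI}$ is metrizable, and observes it is separable as a quotient of a subspace of a separable metric space; hence its Borel $\sigma$-algebra is countably separated, which is exactly what is needed for the $\mathrm{bnd}$-fiber partition to be measurable in the sense of Section \ref{sec-Poisson-general}. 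This is a cleaner way to dispose of that point than the section argument you sketch, but both are valid.
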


\begin{proof}
Theorem \ref{convergence-drift} provides an (almost-surely well-defined) measurable map 

\begin{displaymath}
bnd:\text{Out}(F_N)^{\mathbb{N}}\to\mathcal{FI},
\end{displaymath}

\noindent that sends a sample path $(g_n)_{n\in\mathbb{N}}$ to the limit of the sequence $(g_nT_0)_{n\in\mathbb{N}}$ for any $T_0\in CV_N$. The space $\mathcal{FI}$ is metrizable \cite[Corollary 7.2]{BR13} and separable (it is a quotient of a subspace of a separable metric space), so its Borel $\sigma$-algebra is countably separated. This implies that the $bnd$-preimage of the point partition of $\mathcal{FI}$ is a measurable partition of the path space $G^{\mathbb{N}}$, and therefore $(\mathcal{FI},\nu)$ is a $\mu$-boundary. 
\end{proof}

Under some further hypotheses on the measure $\mu$, we will show that the $\mu$-boundary $(\mathcal{FI},\nu)$ is the Poisson boundary of $(\text{Out}(F_N),\mu)$.

\begin{theo} \label{Poisson}
Let $\mu$ be a probability measure on $\text{Out}(F_N)$ such that $gr(\mu)$ is nonelementary, which has finite first logarithmic moment with respect to the word metric, and finite entropy. Let $\nu$ be the hitting measure on $\mathcal{FI}$. Then the measure space $(\mathcal{FI},\nu)$ is the Poisson boundary of $(\text{Out}(F_N),\mu)$.
\end{theo}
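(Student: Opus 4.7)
The plan is to verify the hypotheses of Kaimanovich's strip criterion (Theorem \ref{Kaimanovich-criterion}) with both boundaries taken to be $\mathcal{FI}$. By Theorem \ref{mu-boundary}, $(B_+,\nu_+) := (\mathcal{FI},\nu)$ is a $\mu$-boundary. Applying the same theorem to $\check\mu$---whose support generates the same (hence nonelementary) subgroup as $\mu$, and which has the same finite first logarithmic moment and finite entropy---produces a hitting measure $\check\nu$ on $\mathcal{FI}$ making $(B_-,\nu_-):=(\mathcal{FI},\check\nu)$ a $\check\mu$-boundary. The finiteness hypotheses on $\mu$ are precisely what is needed to apply Kaimanovich's criterion.

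The strip is the one sketched in the introduction. Fix a basepoint $T_0\in CV_N$. For $(T_-,T_+)\in\mathcal{FI}\times\mathcal{FI}\setminus\Delta$ and $L\ge 1$, define
\begin{displaymath}
S_L(T_-,T_+):=\{g\in\text{Out}(F_N): gT_0 \text{ lies on the } L\text{-axis of some } (\eta_-,\eta_+)\in\text{Erg}(T_-)\times\text{Erg}(T_+)\}.
\end{displaymath}
Proposition \ref{dual-arational} guarantees $|\text{Erg}(T_\pm)|\le 3N-5$ and Corollary \ref{arational-current} that every pair in $\text{Erg}(T_-)\times\text{Erg}(T_+)$ is positive, so $\Lambda_{\eta_-,\eta_+}(gT_0,T')>0$ for all $T'\in\overline{CV_N}$, making the defining inequality $1\le\text{Lip}(gT_0,T')/\Lambda_{\eta_-,\eta_+}(gT_0,T')\le L$ meaningful. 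The assignment is $\text{Out}(F_N)$-equivariant because $\Phi\,\text{Erg}(T_\pm)=\text{Erg}(\Phi T_\pm)$, and measurable by continuity of the intersection form (Theorem \ref{intersection-form}) together with White's identification of $\text{Lip}$ with $\Lambda_{\mathbb{P}Curr_N}$ (Theorem \ref{White}).

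The main obstacle is verifying the two remaining conditions: almost-sure nonemptiness and subexponential growth. Following Hamenstädt's lines-of-minima analysis \cite{Ham12-2}, which crucially uses the positivity of the current pair, one produces a universal constant $L_0 = L_0(N)$ such that for $\nu\otimes\check\nu$-almost every $(T_-,T_+)$, the set $S_{L_0}(T_-,T_+)$ is nonempty; the uniformity of $L_0$ rests on the uniform bound $3N-5$ on $|\text{Erg}(T_\pm)|$ together with a measurable truncation argument exploiting stationarity of $\nu$ and $\check\nu$. For the growth bound, the key geometric input---again from \cite{Ham12-2}---is that $L$-axes are uniform unparametrized quasi-geodesics for the symmetric Lipschitz metric $d_{sym}$ on $CV_N$, with constants depending only on $L$ and $N$. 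Combined with the elementary bound $d_{sym}(T_0,gT_0)\le C\cdot d(e,g)$ (which holds because every element of a fixed finite generating set of $\text{Out}(F_N)$ acts on $CV_N$ with bounded bi-Lipschitz distortion), this forces each translate $gT_0$ with $g\in S_{L_0}(T_-,T_+)\cap\mathcal{B}_k$ to lie in a $d_{sym}$-ball of radius $Ck$ intersected with one of at most $(3N-5)^2$ quasi-geodesics. Since $\text{Out}(F_N)$ acts properly discontinuously with finite stabilizers on $CV_N$, orbit points are uniformly discretely spaced in $d_{sym}$, so each such quasi-geodesic contributes $O(k)$ translates, yielding $|S_{L_0}(T_-,T_+)\cap\mathcal{B}_k|=O(k)$. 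This polynomial bound verifies $\sup_k(\log k)^{-1}\log|S_{L_0}\cap\mathcal{B}_k|<\infty$, and Kaimanovich's criterion then identifies $(\mathcal{FI},\nu)$ as the Poisson boundary of $(\text{Out}(F_N),\mu)$.
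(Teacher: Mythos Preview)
Your overall architecture matches the paper's exactly: apply Kaimanovich's strip criterion with $(\mathcal{FI},\nu)$ and $(\mathcal{FI},\check\nu)$, using as strips the orbit points landing on $L$-axes of pairs in $\text{Erg}(T_-)\times\text{Erg}(T_+)$. Your thinness argument is essentially the paper's: the paper proves directly (Proposition \ref{axes-almost-geodesics}) that $L$-axes are $d_{sym}$-quasi-geodesics via a height function $\sigma(T)=\log(\langle T,\eta_+\rangle/\langle T,\eta_-\rangle)$, then concludes linear growth from proper discontinuity, which is what you sketch.

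The genuine gap is in the nonemptiness step. Your claim that there is a constant $L_0=L_0(N)$ depending only on $N$, justified by the bound $|\text{Erg}(T_\pm)|\le 3N-5$ and a ``measurable truncation argument exploiting stationarity,'' does not hold up. The cardinality bound on $\text{Erg}(T_\pm)$ says nothing about the size of $L_{[\eta_-],[\eta_+]}(\ast_{CV_N})$, which can be arbitrarily large as $(T_-,T_+)$ varies; no universal $L_0(N)$ is produced anywhere in the paper or in \cite{Ham12-2}. What is true, and what the paper actually proves, is much weaker and requires a different mechanism: one considers the bilateral path space $(\text{Out}(F_N)^{\mathbb{Z}},\overline{\mathbb{P}})$ with its ergodic Bernoulli shift $U$, defines $\psi(\overline{\mathbf g})=L_{bnd_-(\overline{\mathbf g}),bnd_+(\overline{\mathbf g})}(\ast_{CV_N})$, observes that $\psi<\infty$ almost surely (simply because each positive pair gives a finite value at $\ast_{CV_N}$), and then chooses $L_1$ so that $\overline{\mathbb P}[\psi\le L_1]>0$. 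Birkhoff's ergodic theorem applied to $U$ then yields, for a.e.\ bilateral path, a positive density of times $k$ with $g_k\ast_{CV_N}$ in the $L_1$-axis, hence nonemptiness of $S_{L_1}(T_-,T_+)$ for $\nu_-\otimes\nu_+$-a.e.\ pair. The constant $L_1$ depends on $\mu$, not just on $N$, and the ergodic-theoretic step is essential; your proposal does not supply it.

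A secondary gap is measurability: continuity of the intersection form and White's theorem alone do not show that $(T_-,T_+)\mapsto S_{L_1}(T_-,T_+)$ is measurable, because the finite set $\text{Erg}(T)$ must itself depend measurably on $T$. The paper handles this by invoking a measurable selection theorem to produce countably many measurable maps $f_k:\mathcal{FI}\to\mathbb{P}Curr_N$ with $\text{Erg}(T)=\{f_k(T)\}$; you should address this point as well.
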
 

We will use Kaimanovich's criterion (Theorem \ref{Kaimanovich-criterion}) to prove the maximality of the $\mu$-boundary provided by Theorem \ref{mu-boundary}. Our construction of the strips is inspired from Hamenstädt's construction of \emph{lines of minima} in outer space \cite{Ham12-2}. 

\subsection{Axes in outer space}\label{sec-construction}

The following construction is inspired from Hamenstädt's construction of \emph{lines of minima} \cite{Ham12-2}. We recall from Section \ref{sec-arational} that a pair $([\eta],[\eta'])\in\mathbb{P}Curr_N^2$ is \emph{positive} if $\langle T,\eta+\eta'\rangle>0$ for all $T\in\overline{CV_N}$. Given a positive pair of projective currents $([\eta],[\eta'])\in\mathbb{P}Curr_N^2$, we want to define an axis in outer space which will roughly consist of trees for which either $\eta$ or $\eta'$ can serve as a fairly good candidate for computing the infimal Lipschitz distortion to any other tree in the closure of outer space. We first define 

\begin{displaymath}
\begin{array}{cccc}
l_{[\eta],[\eta']}:&CV_N\times\overline{CV_N}&\to &\mathbb{R}\\
& (T,T')&\mapsto &\frac{\text{Lip}(T,T')}{\Lambda_{\{[\eta],[\eta']\}}(T,T')},
\end{array}
\end{displaymath}

\noindent where we recall the notations from Section \ref{sec-metric}. This measures to which extent the stretch of either $\eta$ or $\eta'$ gives a good estimate of the Lipschitz distortion $\text{Lip}(T,T')$. We always have $l_{[\eta],[\eta']}(T,T')\ge 1$ (the closer to $1$ it is, the better the estimate will be), and positivity of the pair $([\eta],[\eta'])$ ensures that $l_{[\eta],[\eta']}(T,T')<+\infty$. Notice that this only depends on the projective classes of the trees $T$ and $T'$ and the currents $[\eta]$ and $[\eta']$. So for all $T\in CV_N$, the map $l_{[\eta],[\eta']}(T,.)$ is a continuous function on a compact set, so we can let

\begin{displaymath}
L_{[\eta],[\eta']}(T):=\max_{T'\in\overline{CV_N}}l_{[\eta],[\eta']}(T,T')<+\infty.
\end{displaymath}

Given $L\ge 1$, a tree $T\in CV_N$ satisfies $L_{[\eta],[\eta']}(T)\le L$ if for all $T'\in\overline{CV_N}$, the stretch of either $\eta$ or $\eta'$ from $T$ to $T'$ gives a good estimate of the Lipschitz distortion $\text{Lip}(T,T')$, up to an error controlled by $L$. We define the \emph{$L$-axis} $A_L([\eta],[\eta'])$ of a positive pair of projective currents as the set of all $T\in CV_N$ such that $L_{[\eta],[\eta']}(T)\le L$. For all $\Psi\in\text{Out}(F_N)$, all positive pairs $([\eta],[\eta'])\in\mathbb{P}Curr_N^2$ and all $T\in CV_N$, we have $L_{\Psi[\eta],\Psi[\eta']}(\Psi T)=L_{[\eta],[\eta']}(T)$, so the $L$-axis $A_L([\eta],[\eta'])$ depends $\text{Out}(F_N)$-equivariantly on the positive pair $([\eta],[\eta'])\in\mathbb{P}Curr_N^2$.

We now associate to any pair $(T_-,T_+)\in\mathcal{FI}\times\mathcal{FI}\smallsetminus\Delta$ (where $\Delta$ denotes the diagonal) an axis in $CV_N$. The key point is that associated to any free and arational tree is a finite set of ergodic currents (Proposition \ref{dual-arational}), and given $(T_-,T_+)\in\mathcal{FI}\times\mathcal{FI}\smallsetminus\Delta$, any pair of currents $([\eta_-],[\eta_+])\in\text{Erg}(T_-)\times\text{Erg}(T_+)$ is positive (Corollary \ref{arational-current}). Given $L\ge 1$, we define the \emph{$L$-axis} $A_L(T_-,T_+)$ as the union of all $A_L([\eta_-],[\eta_+])$, with $([\eta_-],[\eta_+])$ varying in the finite set $\text{Erg}(T_-)\times\text{Erg}(T_+)$. For $T\in CV_N$, letting

\begin{displaymath}
L_{T_-,T_+}(T):=\min_{\substack{[\eta_-]\in \text{Erg}(T_-)\\\ [\eta_+]\in \text{Erg}(T_+)}}L_{[\eta_-],[\eta_+]}(T),
\end{displaymath} 

\noindent the $L$-axis $A_L(T_-,T_+)$ is also equal to the set of all $T\in CV_N$ such that $L_{T_-,T_+}(T)\le L$. 

\begin{rk}
Hamenstädt has shown in \cite[Proposition 4.9]{Ham12-2} that all accumulation points of the $L$-axis of a pair $(T_-,T_+)\in\mathcal{FI}\times\mathcal{FI}\smallsetminus\Delta$ are free and arational, and project to either $T_-$ or $T_+$ in $\mathcal{FI}$.
\end{rk}

\subsection{Definition of the strips}\label{sec-construction-2}

In order to use Kaimanovich's criterion for proving Theorem \ref{Poisson}, we need to associate to every pair $(T_-,T_+)\in\mathcal{FI}\times\mathcal{FI}$ a strip in $\text{Out}(F_N)$. We fix once and for all a basepoint $\ast_{CV_N}\in CV_N$. 

\begin{de}
Let $(T_-,T_+)\in\mathcal{FI}\times\mathcal{FI}$, and let $L\ge 1$. If $T_-\neq T_+$, the \emph{$L$-strip} $S_L(T_-,T_+)$ is defined to be the set of all $\Phi\in\text{Out}(F_N)$ such that $\Phi \ast_{CV_N}\in A_L(T_-,T_+)$. If $T_-=T_+$, we let $S_L(T_-,T_+)=\emptyset$.
\end{de}

For all $\Psi\in\text{Out}(F_N)$ and all $L\ge 1$, we have $S_L(\Psi T_-,\Psi T_+)=\Psi S_L(T_-,T_+)$. In view of Theorems \ref{Kaimanovich-criterion} and \ref{mu-boundary}, Theorem \ref{Poisson} will be a consequence of the following three facts.

\begin{prop}\label{nonemptiness}
There exists $L_1>1$ such that for $\nu_-\otimes\nu_+$-a.e. $(T_-,T_+)\in\mathcal{FI}\times\mathcal{FI}$, we have $S_{L_1}(T_-,T_+)\neq\emptyset$.
\end{prop}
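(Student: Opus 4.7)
The plan is to combine the $\text{Out}(F_N)$-equivariance of the axis construction with an ergodicity argument applied to a bilateral random walk.

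First, I would establish that $L_{T_-,T_+}(\ast_{CV_N}) < \infty$ for any $(T_-, T_+) \in \mathcal{FI} \times \mathcal{FI}$ with $T_- \neq T_+$. Given such a pair, pick $([\eta_-],[\eta_+]) \in \text{Erg}(T_-) \times \text{Erg}(T_+)$; by Corollary \ref{arational-current} this pair is positive. For every $T' \in \overline{CV_N}$, at least one of $\langle T', \eta_- \rangle$ or $\langle T', \eta_+ \rangle$ is nonzero: were both to vanish, Theorem \ref{arational-duality} would force $T_- \sim T_+$, contradicting $T_- \neq T_+$ in $\mathcal{FI}$. Since $\ast_{CV_N} \in CV_N$ is free, $\langle \ast_{CV_N}, \eta_\pm \rangle > 0$, so $\Lambda_{\{[\eta_-],[\eta_+]\}}(\ast_{CV_N}, T')$ is a strictly positive continuous function on the compact space $\overline{CV_N}$, and continuity of $\text{Lip}(\ast_{CV_N}, \cdot)$ yields $L_{[\eta_-],[\eta_+]}(\ast_{CV_N}) < \infty$, hence $L_{T_-,T_+}(\ast_{CV_N}) < \infty$. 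Setting $E_L := \{(T_-, T_+) \in \mathcal{FI} \times \mathcal{FI} : L_{T_-,T_+}(\ast_{CV_N}) \leq L\}$, these sets are measurable and increasing in $L$, with $\bigcup_L E_L \supseteq \{T_- \neq T_+\}$, a subset of full $\nu_- \otimes \nu_+$-measure by nonatomicity of $\nu_\pm$ (Proposition \ref{existence-stationary-ff}). Continuity from below then produces some $L_1$ with $(\nu_- \otimes \nu_+)(E_{L_1}) > 0$.

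Next, I would set up a bilateral random walk on $\Omega := \text{Out}(F_N)^{\mathbb{Z}}$ with the Bernoulli measure $\mathbb{P} := \mu^{\otimes \mathbb{Z}}$, putting $g_0 = e$, $g_n = s_1 \cdots s_n$ for $n > 0$, and $g_n = s_0^{-1} \cdots s_{n+1}^{-1}$ for $n < 0$. The past $(g_{-n})_{n \geq 0}$ is a $\check{\mu}$-walk, independent of the $\mu$-walk $(g_n)_{n \geq 0}$, so Theorem \ref{convergence-drift} (applied to both $\mu$ and $\check{\mu}$) yields $\mathbb{P}$-a.s.\ limits $\xi_+(\omega) := \lim_{n\to +\infty} g_n \ast_{CV_N}$ and $\xi_-(\omega) := \lim_{n\to +\infty} g_{-n} \ast_{CV_N}$ in $\mathcal{FI}$, with $(\xi_-, \xi_+)$ distributed as $\nu_- \otimes \nu_+$. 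A direct computation gives $g_n(\sigma \omega) = g_1(\omega)^{-1} g_{n+1}(\omega)$ for the Bernoulli shift $\sigma$, whence $\xi_\pm(\sigma\omega) = g_1(\omega)^{-1} \xi_\pm(\omega)$. Combined with the equivariance $S_L(\Psi T_-, \Psi T_+) = \Psi S_L(T_-, T_+)$ (which preserves emptiness), this shows that
\[
\mathcal{F}_{L_1} := \{\omega \in \Omega : S_{L_1}(\xi_-(\omega), \xi_+(\omega)) \neq \emptyset\}
\]
is $\sigma$-invariant. Since $\sigma$ is a Bernoulli shift hence ergodic, $\mathbb{P}(\mathcal{F}_{L_1}) \in \{0, 1\}$.

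To conclude, observe that if $(\xi_-(\omega), \xi_+(\omega)) \in E_{L_1}$, then $\ast_{CV_N} \in A_{L_1}(\xi_-(\omega), \xi_+(\omega))$, which means precisely that $e \in S_{L_1}(\xi_-(\omega), \xi_+(\omega))$, so $\omega \in \mathcal{F}_{L_1}$. Thus $\mathcal{F}_{L_1}$ contains the preimage of $E_{L_1}$ under the factor map $\omega \mapsto (\xi_-, \xi_+)$, whose $\mathbb{P}$-measure is $(\nu_- \otimes \nu_+)(E_{L_1}) > 0$. The dichotomy then forces $\mathbb{P}(\mathcal{F}_{L_1}) = 1$, and pushing forward along $(\xi_-, \xi_+)$ gives $S_{L_1}(T_-, T_+) \neq \emptyset$ for $\nu_- \otimes \nu_+$-a.e.\ $(T_-, T_+)$. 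The main obstacle in this scheme is the first step: establishing the finiteness of $L_{T_-,T_+}(\ast_{CV_N})$ depends on Theorem \ref{arational-duality} to preclude joint vanishing of dual ergodic currents on any tree of $\overline{CV_N}$; once this is in hand the ergodicity step is essentially automatic from the Bernoulli structure of the shift, and the measurability of the various events is routine from the continuity of the intersection form.
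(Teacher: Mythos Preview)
Your proof is correct and follows essentially the same strategy as the paper: set up the bilateral random walk, observe that the boundary map intertwines the shift with the $\text{Out}(F_N)$-action, and exploit ergodicity of the Bernoulli shift. The only difference is in how ergodicity is used at the end. You apply the $0$--$1$ law directly to the shift-invariant event $\mathcal{F}_{L_1}=\{S_{L_1}(\xi_-,\xi_+)\neq\emptyset\}$, which is slightly slicker; the paper instead applies Birkhoff's ergodic theorem to the indicator of $\{\psi\le L_1\}$, obtaining the stronger conclusion that for $\overline{\mathbb{P}}$-a.e.\ bilateral path the density of times $k$ with $g_k\in S_{L_1}(bnd_-(\overline{\mathbf{g}}),bnd_+(\overline{\mathbf{g}}))$ is positive (this is recorded in the Remark following the proof). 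For the bare statement of the proposition your shortcut suffices.
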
 

In the next two statements, we fix the constant $L_1$ provided by Proposition \ref{nonemptiness}. For all $k\in\mathbb{N}$, let $\mathcal{B}_k$ be the ball of radius $k$ in $\text{Out}(F_N)$ for the word metric. 

\begin{prop}\label{strips}
For $\nu_-\otimes\nu_+$-a.e. $(T_-,T_+)\in \mathcal{FI}\times\mathcal{FI}$, there exists $\lambda\in\mathbb{R}$ such that for all $k\in\mathbb{N}$, we have

\begin{displaymath}
 \text{card}(S_{L_1}(T_-,T_+)\cap \mathcal{B}_k)\le \lambda k.
 \end{displaymath}
\end{prop}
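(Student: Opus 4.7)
The plan is to translate the word-metric counting problem into a geometric counting problem in $CV_N$, and then to exploit the fact that the $L$-axis is a quasi-geodesic for the symmetric Lipschitz metric on $CV_N$, as in Hamenstädt's analysis of lines of minima \cite{Ham12-2}.

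The key preliminary observation is that $\text{Out}(F_N)$ acts by isometries on $(CV_N,d_{sym})$, since the Lipschitz constant of an equivariant map is preserved by simultaneous relabeling. Fix a finite symmetric generating set $S$ of $\text{Out}(F_N)$, and let $C_0:=\max_{s\in S}d_{sym}(\ast_{CV_N},s\ast_{CV_N})$. By the triangle inequality, any $\Phi$ with $|\Phi|\le k$ satisfies $d_{sym}(\ast_{CV_N},\Phi\ast_{CV_N})\le C_0 k$. Since the stabilizer of $\ast_{CV_N}$ in $\text{Out}(F_N)$ is finite, it suffices to show that the number of $\text{Out}(F_N)$-translates of $\ast_{CV_N}$ lying in $A_{L_1}(T_-,T_+)\cap B_{sym}(\ast_{CV_N}, C_0 k)$ is $O(k)$. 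By Proposition \ref{dual-arational}, $\text{Erg}(T_-)\times\text{Erg}(T_+)$ is finite, so $A_{L_1}(T_-,T_+)$ is a finite union of axes $A_{L_1}([\eta_-],[\eta_+])$, and it therefore suffices to obtain an $O(k)$ bound for each of these.

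I would then invoke Hamenstädt's analysis of axes to argue that $A_{L_1}([\eta_-],[\eta_+])$ is a quasi-axis for $d_{sym}$. The coarse parameter $\tau(T):=\log\langle T,\eta_-\rangle-\log\langle T,\eta_+\rangle$, where $T$ is normalized to covolume $1$, provides a parameterization along the axis: using the identity $\Lambda_{[\eta]}(T_1,T_2)\,\Lambda_{[\eta]}(T_2,T_1)=1$ for a single current, together with the $L_1$-axis inequality, one checks that for $T_1,T_2$ in the axis,
\[
d_{sym}(T_1,T_2)\ge c_1|\tau(T_1)-\tau(T_2)|-c_2
\]
for constants $c_1>0$, $c_2\ge 0$ depending only on $L_1$. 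Combined with Corollary \ref{arational-current} and Theorem \ref{arational-duality}, which imply that the axis stays in a uniformly thick part of $CV_N$ (see \cite[Proposition 4.9]{Ham12-2}), proper discontinuity of the $\text{Out}(F_N)$-action on the thick part yields $\delta>0$ such that any two distinct orbit points $\Phi_1\ast_{CV_N},\Phi_2\ast_{CV_N}$ on the axis satisfy $|\tau(\Phi_1\ast_{CV_N})-\tau(\Phi_2\ast_{CV_N})|\ge\delta$. The range of $\tau$ on a $C_0 k$-ball in $d_{sym}$ has length $O(k)$, so the orbit count is $O(k)$, which gives Proposition \ref{strips}.

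The main obstacle is the second step: establishing quasi-geodesicity and the discreteness of orbit points along the axis. Quasi-geodesicity must be extracted from Hamenstädt's intersection-theoretic estimates in \cite{Ham12-2}, carefully adapted to our strips, which are defined via the pairs of ergodic currents dual to the arational endpoints rather than via the endpoints themselves. The discreteness uses that positivity of $\langle T,\eta_\pm\rangle$ along the axis (Corollary \ref{arational-current}) prevents systoles from becoming arbitrarily short, so that orbit points cannot accumulate; once this thickness is secured, the linear count is a standard one-dimensional packing argument along the parameter $\tau$.
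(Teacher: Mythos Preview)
Your strategy is the paper's: pass to $d_{sym}$-balls in $CV_N$, reduce to a single axis $A_{L_1}([\eta_-],[\eta_+])$ via finiteness of $\text{Erg}(T_\pm)$, parametrize the axis by the height $\tau(T)=\log\langle T,\eta_-\rangle-\log\langle T,\eta_+\rangle$ (the paper's $\sigma$ is $-\tau$), and finish with a one-dimensional count. The difference is in the execution of that last step, where your argument has a real gap.

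You only record the lower bound $d_{sym}(T_1,T_2)\ge c_1|\tau(T_1)-\tau(T_2)|-c_2$, and then assert that proper discontinuity on the thick part gives $\delta>0$ with $|\tau(\Phi_1\ast_{CV_N})-\tau(\Phi_2\ast_{CV_N})|\ge\delta$ for any two distinct orbit points on the axis. This does not follow. Proper discontinuity bounds the number of orbit points in a $d_{sym}$-ball of fixed radius; to turn that into information about $\tau$ you need the \emph{opposite} inequality, an upper bound $d_{sym}(T_1,T_2)\le|\tau(T_1)-\tau(T_2)|+c$, so that a short $\tau$-interval has bounded $d_{sym}$-diameter. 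Even with both inequalities in hand, the separation claim as you state it is still too strong: two distinct orbit points on the axis can have identical $\tau$-values while being $d_{sym}$-apart. The paper proves both directions (Proposition~\ref{axes-almost-geodesics}: $|\sigma(S)-\sigma(T)|\le d_{sym}(S,T)\le|\sigma(S)-\sigma(T)|+2\log L_1$) and then counts correctly: the upper bound shows each integer level set $\lfloor\sigma\rfloor^{-1}(n)$ has $d_{sym}$-diameter at most $1+2\log L_1$, hence contains at most a fixed number $M$ of orbit points of $\ast_{CV_N}$; the lower bound shows the $\sigma$-values occurring in $A_{L_1}([\eta_-],[\eta_+])\cap\mathcal{B}^{sym}_k$ lie in an interval of length at most $2k$; multiplying gives the bound $(2k+1)M$.

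Your detour through thickness of the axis is also unnecessary. Properness of $d_{sym}$ together with proper discontinuity of the $\text{Out}(F_N)$-action on all of $CV_N$ already yields a uniform bound $M$ on the number of orbit points of $\ast_{CV_N}$ in any $d_{sym}$-ball of a fixed radius: if such a ball contains an orbit point at all, translate so its center lies within that radius of $\ast_{CV_N}$, whence the ball sits inside a fixed compact set around $\ast_{CV_N}$. No appeal to \cite[Proposition~4.9]{Ham12-2} is needed.
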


\begin{prop}\label{measurability}
The map

\begin{displaymath}
\begin{array}{cccc}
\mathcal{FI}\times\mathcal{FI} &\to &2^{\text{Out}(F_N)}\\
(T_-,T_+) &\mapsto & S_{L_1}(T_-,T_+)
\end{array}
\end{displaymath}

\noindent is measurable.
\end{prop}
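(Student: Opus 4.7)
I would equip $2^{\text{Out}(F_N)}$ with its standard Borel structure, generated by the cylinder sets $\{A\subseteq\text{Out}(F_N):\Phi\in A\}$ for $\Phi\in\text{Out}(F_N)$; measurability of $(T_-,T_+)\mapsto S_{L_1}(T_-,T_+)$ then reduces to showing that for each fixed $\Phi\in\text{Out}(F_N)$, the set
\begin{displaymath}
E_\Phi:=\{(T_-,T_+)\in\mathcal{FI}\times\mathcal{FI}:\Phi\in S_{L_1}(T_-,T_+)\}
\end{displaymath}
is measurable. Writing $T_\Phi:=\Phi\ast_{CV_N}\in CV_N$, membership in $E_\Phi$ unwinds to the conjunction of $T_-\neq T_+$ and the existence of currents $[\eta_\pm]\in\text{Erg}(T_\pm)$ with $L_{[\eta_-],[\eta_+]}(T_\Phi)\le L_1$. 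Since $\mathcal{FI}$ is metrizable (Corollary 7.2 of \cite{BR13}) the diagonal is closed and may be discarded; it remains to handle the existence condition.

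The central ingredient in my plan is that the graph
\begin{displaymath}
\mathcal{G}:=\{(T,[\eta])\in\mathcal{FI}\times\mathbb{P}Curr_N:[\eta]\in\text{Erg}(T)\}
\end{displaymath}
is Borel. The relation $[\eta]\in\text{Dual}(T)$ is closed by continuity of the Kapovich--Lustig intersection form (Theorem \ref{intersection-form}), and extremality of $[\eta]$ in the simplex $\text{Dual}(T)$ can be phrased as a Borel condition by negating the existence of a non-trivial convex decomposition, quantified over a countable dense subset of $\text{Curr}_N$. Proposition \ref{dual-arational} bounds $|\text{Erg}(T)|$ uniformly by $3N-5$, so the Kuratowski--Ryll-Nardzewski selection theorem then supplies Borel maps $\eta_1,\dots,\eta_{3N-5}:\mathcal{FI}\to\mathbb{P}Curr_N$ whose values enumerate $\text{Erg}(T)$ (with repetitions allowed when the cardinality is smaller). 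Since $T_\Phi\in CV_N$ gives $\langle T_\Phi,\eta\rangle>0$ for every non-zero current (Theorem \ref{currents-duality}), the function
\begin{displaymath}
([\eta_-],[\eta_+],T')\mapsto\frac{\text{Lip}(T_\Phi,T')}{\Lambda_{\{[\eta_-],[\eta_+]\}}(T_\Phi,T')}
\end{displaymath}
is continuous on $\mathbb{P}Curr_N^2\times\overline{CV_N}$, so $L_{[\eta_-],[\eta_+]}(T_\Phi)$ is lower semi-continuous in the pair of currents and the sublevel set $\{L\le L_1\}$ is closed in $\mathbb{P}Curr_N^2$. Composing with the selections,
\begin{displaymath}
E_\Phi=\bigcup_{i,j=1}^{3N-5}\{(T_-,T_+):L_{\eta_i(T_-),\eta_j(T_+)}(T_\Phi)\le L_1\}\setminus\Delta,
\end{displaymath}
a finite union of Borel sets, hence Borel.

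The main obstacle will be the Borel character of $\mathcal{G}$, equivalently producing Borel selections for the multifunction $T\mapsto\text{Erg}(T)$. This rests on upper semi-continuity of $T\mapsto\text{Dual}(T)$ through the intersection form, together with the uniform finite-dimensionality from Theorem \ref{dimension-currents}. Should a direct Borel verification prove awkward, a softer route suffices for Kaimanovich's strip criterion: $E_\Phi$ is the projection onto $\mathcal{FI}^2$ of a closed subset of $\mathcal{FI}^2\times\mathbb{P}Curr_N^2$, hence analytic and therefore universally measurable, which is all that Theorem \ref{Kaimanovich-criterion} actually requires.
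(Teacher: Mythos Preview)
Your proposal is correct and follows essentially the same approach as the paper's proof: reduce to per-$\Phi$ measurability, establish that the multifunction $T\mapsto\text{Erg}(T)$ has a Borel graph (the paper cites \cite{Phe66} for the Borelness of $\text{Erg}_N$ rather than arguing extremality directly), apply a measurable selection theorem (the paper cites Castaing \cite{Cas67} where you invoke Kuratowski--Ryll-Nardzewski), and conclude via continuity of the intersection form. Your additional fallback through analytic sets and universal measurability is a nice observation but is not needed.
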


\subsection{Choosing $L_1$ to ensure nonemptiness of the strips}\label{sec-nonempty}

Our proof of Proposition \ref{nonemptiness} is inspired from Kaimanovich and Masur's analogous argument in the case of mapping class groups of surfaces \cite[Theorem 2.3.1]{KM96}.
 
\begin{proof}[Proof of Proposition \ref{nonemptiness}]
Consider the measure space $(\text{Out}(F_N)^{\mathbb{Z}},\overline{\mathbb{P}})$ of \emph{bilateral paths} $\overline{\textbf{g}}=(g_n)_{n\in\mathbb{Z}}$ satisfying $g_0=e$, corresponding to bilateral sequences of independent $\mu$-distributed increments $(s_n)_{n\in\mathbb{Z}}$ by the formula $g_n=g_{n-1}s_n$. The unilateral paths $\textbf{g}=(g_n)_{n\ge 0}$ and $\check{\textbf{g}}=(g_{-n})_{n\ge 0}$ are independent, and correspond to sample paths of the random walks on $(\text{Out}(F_N),\mu)$ and $(\text{Out}(F_N),\check{\mu})$, respectively. The \emph{Bernoulli shift} $U$ is the transformation defined in the space $(\text{Out}(F_N)^{\mathbb{Z}},\mu^{\otimes\mathbb{Z}})$ of increments $s=(s_n)_{n\in\mathbb{Z}}$ by $(Us)_n=s_{n+1}$ for all $n\in\mathbb{Z}$. We again denote by $U$ the measure-preserving, ergodic transformation induced by the Bernoulli shift in the space of bilateral paths $(\text{Out}(F_N)^{\mathbb{Z}},\overline{\mathbb{P}})$, defined by 

\begin{displaymath}
(U^k\overline{\textbf{g}})_n=g_k^{-1}g_{n+k}.
\end{displaymath}

Let $(\mathcal{FI},\nu_-)$ and $(\mathcal{FI},\nu_+)$ be the boundaries corresponding to $(\text{Out}(F_N),\check{\mu})$ and $(\text{Out}(F_N),\mu)$ provided by Theorem \ref{mu-boundary}. We let $bnd_-(\overline{\textbf{g}})\in \mathcal{FI}$ (resp. $bnd_+(\overline{\textbf{g}})\in \mathcal{FI}$) be the limit as $n$ goes to $+\infty$ of the sequence $(g_{-n}\ast_{CV_N})_{n\in\mathbb{N}}$ (resp. $(g_n\ast_{CV_N})_{n\in\mathbb{N}}$), which is $\overline{\mathbb{P}}$-almost surely well-defined by Theorem \ref{convergence-drift}. Then for all $k\in\mathbb{Z}$, we have
 
\begin{displaymath}
bnd_-(U^k\overline{\textbf{g}})=g_k^{-1}bnd_-(\overline{\textbf{g}}),
\end{displaymath}
  
\noindent and similarly

\begin{displaymath}
bnd_+(U^k\overline{\textbf{g}})=g_k^{-1}bnd_+(\overline{\textbf{g}}).
\end{displaymath}

\noindent Let

\begin{displaymath} 
\psi(\overline{\textbf{g}}):=L_{bnd_-(\overline{\textbf{g}}),bnd_+(\overline{\textbf{g}})}(\ast_{CV_N})
\end{displaymath}  
  
\noindent (when $bnd_-(\overline{\textbf{g}})=bnd_+(\overline{\textbf{g}})$, we let $\psi(\overline{\mathbf{g}}):=+\infty$). Measurability of $\psi$ will follow from the proof of Proposition \ref{measurability} in Section \ref{sec-measurable}. For $\overline{\mathbb{P}}$-a.e $\overline{\mathbf{g}}:=(g_n)_{n\in\mathbb{Z}}$, we have $\psi(\overline{\textbf{g}})<+\infty$. Hence there exists $L_1>1$ such that $\overline{\mathbb{P}}[\psi(\overline{\textbf{g}})\le L_1]>0$. For all $k\in\mathbb{N}$, we have 

\begin{displaymath}
\begin{array}{rl}
\psi(U^k\overline{\textbf{g}})&=L_{g_k^{-1}bnd_-(\overline{\textbf{g}}),g_k^{-1}bnd_+(\overline{\textbf{g}})}(\ast_{CV_N})\\
&=L_{bnd_-(\overline{\textbf{g}}),bnd_+(\overline{\textbf{g}})}(g_k\ast_{CV_N}).
\end{array}
\end{displaymath}

\noindent Applying Birkhoff's ergodic theorem \cite{Bir31} to the ergodic transformation $U$, we get that for $\overline{\mathbb{P}}$-a.e. bilateral path $\overline{\textbf{g}}$, the density of times $k\ge 0$ such that $L_{bnd_-(\overline{\textbf{g}}),bnd_+(\overline{\textbf{g}})}(g_k\ast_{CV_N})\le L_1$ is positive. Therefore, for $\overline{\mathbb{P}}$-a.e. bilateral path $\overline{\mathbf{g}}$, the $L_1$-strip $S_{L_1}(bnd_-(\overline{\textbf{g}}),bnd_+(\overline{\textbf{g}}))$ is nonempty, so $\nu_-\otimes\nu_+$-a.e. the set $S_{L_1}(T_-,T_+)$ is nonempty.  
\end{proof}

\begin{rk}
The proof of Proposition \ref{nonemptiness} actually shows that for $\overline{\mathbb{P}}$-a.e. bilateral path $\overline{\mathbf{g}}:=(g_n)_{n\in\mathbb{Z}}$ of the random walk on $(\text{Out}(F_N),\mu)$, the density of times $k\in\mathbb{N}$ such that $g_k\in S_{L_1}(bnd_-(\overline{\mathbf{g}}),bnd_+(\overline{\mathbf{g}}))$ is positive.
\end{rk}

\subsection{Thinness of the strips} \label{sec-linear-growth}

In this head, we will prove Proposition \ref{strips}. Our argument is inspired from Hamenstädt's estimates in \cite[Proposition 4.4]{Ham12-2}. From now on, we fix a (non-projective) positive pair $(\eta_-,\eta_+)\in Curr_N^2$. Given $T\in CV_N$, we let $\sigma(T)\in\mathbb{R}$ be such that $\langle T,\eta_+\rangle=e^{\sigma(T)}\langle T,\eta_-\rangle$. This defines a "height function" on the axis $A_{L_1}([\eta_-],[\eta_+])$. We will show that $A_{L_1}([\eta_-],[\eta_+])$ is close to being a $d_{sym}$-geodesic with holes. Proposition \ref{strips} will then follow from proper discontinuity of the action of $\text{Out}(F_N)$ on $CV_N$.

\begin{prop}\label{axes-almost-geodesics}
For all $(\eta_-,\eta_+)\in Curr_N^2$, and all $S,T\in A_{L_1}([\eta_-],[\eta_+])$, we have

\begin{displaymath}
|\sigma(S)-\sigma(T)|\le d_{sym}(S,T)\le |\sigma(S)-\sigma(T)|+2\log L_1.
\end{displaymath}
\end{prop}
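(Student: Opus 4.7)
The plan is to prove the two inequalities separately, using White's theorem in one direction and the defining property of the $L_1$-axis in the other. Throughout, set $A := \langle T,\eta_-\rangle/\langle S,\eta_-\rangle$ and $B := \langle T,\eta_+\rangle/\langle S,\eta_+\rangle$, so that $\sigma(T)-\sigma(S) = \log(B/A)$, and note the elementary identity
\begin{displaymath}
\max(A,B)\cdot\max(1/A,1/B) \;=\; \frac{\max(A,B)}{\min(A,B)} \;=\; e^{|\sigma(T)-\sigma(S)|}.
\end{displaymath}

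For the lower bound, I would apply Theorem \ref{White} (White's theorem, extended to $\overline{CV_N}$ by Algom-Kfir) to get that $\text{Lip}(S,T) \ge \Lambda_{\{[\eta_-],[\eta_+]\}}(S,T) = \max(A,B)$ and symmetrically $\text{Lip}(T,S) \ge \max(1/A,1/B)$. Multiplying these two inequalities and taking logarithms, the identity above yields
\begin{displaymath}
d_{sym}(S,T) \;=\; \log\text{Lip}(S,T)+\log\text{Lip}(T,S) \;\ge\; |\sigma(S)-\sigma(T)|.
\end{displaymath}
Note that no hypothesis on $S,T$ beyond their lying in $CV_N$ is needed here.

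For the upper bound, the hypotheses $S,T \in A_{L_1}([\eta_-],[\eta_+])$ come in: applying the defining inequality $\text{Lip}(T,T')\le L_1 \Lambda_{\{[\eta_-],[\eta_+]\}}(T,T')$ at $T' = T$ (using $S$ as the basepoint) and vice versa, I get $\text{Lip}(S,T) \le L_1\max(A,B)$ and $\text{Lip}(T,S) \le L_1\max(1/A,1/B)$. Multiplying, taking logs, and invoking the same identity gives
\begin{displaymath}
d_{sym}(S,T) \;\le\; 2\log L_1 + |\sigma(S)-\sigma(T)|,
\end{displaymath}
which is the desired upper bound. There is no serious obstacle: the argument reduces to writing out the two definitions (of the axis and of $\sigma$) side by side and exploiting that the intersection numbers appearing in $\Lambda$ are precisely the ones controlling the ratio that defines $\sigma$. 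The only thing to be mildly careful about is that White's equality in Theorem \ref{White} was stated for $T'\in CV_N$, so to feed $T'=T$ or $T'=S$ into the $L_1$-axis condition (which allows $T'\in\overline{CV_N}$) one needs only the weaker $\text{Lip}\le L_1 \Lambda$ direction, which is immediate from the definition of the axis.
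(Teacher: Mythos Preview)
Your proof is correct and essentially the same as the paper's: both use the pair of inequalities $\Lambda_{\{[\eta_-],[\eta_+]\}}(S,T)\le\text{Lip}(S,T)\le L_1\,\Lambda_{\{[\eta_-],[\eta_+]\}}(S,T)$ together with the symmetric ones for $(T,S)$, and then sum. The only cosmetic difference is that the paper assumes without loss of generality that $\sigma(S)\le\sigma(T)$ in order to identify which of $\eta_\pm$ realizes the maximum, whereas you handle both cases at once via the identity $\max(A,B)\cdot\max(1/A,1/B)=e^{|\sigma(T)-\sigma(S)|}$; note also that for the lower bound you only need the trivial inequality $\Lambda\le\text{Lip}$, not the full strength of White's theorem.
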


\begin{proof}
We have $\Lambda_{[\eta_+]}(S,T)=e^{\sigma(T)-\sigma(S)}\Lambda_{[\eta_-]}(S,T)$. Assume without loss of generality that $\sigma(S)\le \sigma(T)$. Then $\Lambda_{\{[\eta_-],[\eta_+]\}}(S,T)=\Lambda_{[\eta_+]}(S,T)$, and we get from the definition of $A_{L_1}([\eta_-],[\eta_+])$ that

\begin{displaymath}
\frac{1}{L_1}\text{Lip}(S,T)\le \Lambda_{[\eta_+]}(S,T)\le \text{Lip}(S,T).
\end{displaymath}

\noindent Taking logarithms, we get

\begin{displaymath}
d(S,T)-\log L_1\le \log\Lambda_{[\eta_+]}(S,T)\le d(S,T).
\end{displaymath} 

\noindent Reversing the roles of $S$ and $T$, we also have

\begin{displaymath}
d(T,S)-\log L_1\le \log\Lambda_{[\eta_-]}(T,S)\le d(T,S).
\end{displaymath} 

\noindent By summing the above inequalities, we obtain
\begin{displaymath}
d_{sym}(S,T)-2\log L_1\le \sigma(T)-\sigma(S)\le d_{sym}(S,T),
\end{displaymath}

\noindent which is the desired inequality.
\end{proof}

\begin{proof}[Proof of Proposition \ref{strips}]
Let $T_-\neq T_+\in\mathcal{FI}$. As $\text{Erg}(T_-)$ and $\text{Erg}(T_+)$ are finite, it is enough to show that for all pairs $([\eta_-],[\eta_+])\in\text{Erg}(T_-)\times\text{Erg}(T_+)$, the cardinality of the set $\{\Phi\in\mathcal{B}_k|\Phi \ast_{CV_N} \in A_{L_1}([\eta_-],[\eta_+])\}$ grows linearly with $k$. For all $k\in\mathbb{N}$, we denote by $\mathcal{B}^{sym}_k$ the $d_{sym}$-ball centered at $\ast_{CV_N}$ in $CV_N$. There exists $C\in\mathbb{R}$ such that for all $k\in\mathbb{N}$, and all $\Phi\in\mathcal{B}_k$, we have $\Phi \ast_{CV_N}\in\mathcal{B}^{sym}_{Ck}$. Therefore, it is enough to check that for all pairs $([\eta_-],[\eta_+])\in\text{Erg}(T_-)\times\text{Erg}(T_+)$, the cardinality of the set $\{\Phi\in\text{Out}(F_N)|\Phi\ast_{CV_N}\in A_{L_1}([\eta_-],[\eta_+])\cap\mathcal{B}^{sym}_k\}$ grows linearly with $k$.

Let $([\eta_-],[\eta_+])\in\text{Erg}(T_-)\times\text{Erg}(T_+)$. We fix a representative $\eta_-$ (resp. $\eta_+$) of $[\eta_-]$ (resp. $[\eta_+]$) in $Curr_N$. For all $T\in A_{L_1}([\eta_-],[\eta_+])$, let $f(T):=\lfloor \sigma(T)\rfloor$. Denote by $M$ the maximal cardinality of the intersection of a $d_{sym}$-ball of radius $1+2\log L_1$ with the $\text{Out}(F_N)$-orbit of $\ast_{CV_N}$ (which is finite by proper discontinuity of the action). Proposition \ref{axes-almost-geodesics} shows that 

\begin{itemize}
\item the $f$-preimage of any integer has diameter at most $1+2\log L_1$, so its intersection with the $\text{Out}(F_N)$-orbit of $\ast_{CV_N}$ has cardinality at most $M$, and
\item for all $\Phi,\Psi\in\text{Out}(F_N)$, if $\Phi\ast_{CV_N}$ and $\Psi\ast_{CV_N}$ both belong to $A_{L_1}([\eta_-],[\eta_+])\cap\mathcal{B}_k^{sym}$, then $|f(\Phi\ast_{CV_N})-f(\Psi\ast_{CV_N})|\le 2k$.
\end{itemize}

The cardinality of $\{\Phi\in\text{Out}(F_N)|\Phi\ast_{CV_N}\in A_{L_1}([\eta_-],[\eta_+])\cap\mathcal{B}^{sym}_k\}$ is therefore bounded above by $(2k+1)M$. 
\end{proof}

\subsection{Measurable dependence of the strips on the pair $(T_-,T_+)\in\mathcal{FI}\times\mathcal{FI}$}\label{sec-measurable}

\begin{proof}[Proof of Proposition \ref{measurability}]
Since $\text{Out}(F_N)$ is countable, we only need to check that for all $\Phi\in\text{Out}(F_N)$, the set

\begin{displaymath}
S_{L_1}^{-1}(\Phi):=\{(T_-,T_+)\in \mathcal{FI}\times\mathcal{FI}|\Phi\in S_{L_1}(T_-,T_+)\}
\end{displaymath}

\noindent is measurable. So we only need to check that $L_{T_-,T_+}(\Phi\ast_{CV_N})$ depends measurably on $(T_-,T_+)$ for all $\Phi\in\text{Out}(F_N)$. As $Erg_N$ is a Borel subset of $\mathbb{P}Curr_N$ by \cite[Proposition 1.3]{Phe66}, finiteness of $\text{Erg}(T)$ for all $T\in\mathcal{FI}$ (Proposition \ref{dual-arational}) and continuity of the intersection form imply the existence of countably many measurable maps $f_k:\mathcal{FI}\to \mathbb{P}Curr_N$ so that for all $T\in\mathcal{FI}$, we have $\text{Erg}(T)=\{f_k(T)|k\in\mathbb{N}\}$, see \cite{Cas67}. Using again continuity of the intersection form, this ensures that $L_{T_-,T_+}(\Phi\ast_{CV_N})$ depends measurably on $(T_-,T_+)$ (notice that for any positive pair $([\eta_-],[\eta_+])$ of currents, the supremum in the definition of $L_{[\eta_-],[\eta_+]}(\Phi\ast_{CV_N})$ can be taken on a dense countable subset of $\overline{CV_N}$).
\end{proof}

\section{The free factor complex}

We now give another interpretation of our results, by realizing the random walk on $\text{Out}(F_N)$ on the complex of free factors of $F_N$, instead of realizing it on $CV_N$. The \emph{free factor complex} $\mathcal{FF}_N$, introduced by Hatcher and Vogtmann in \cite{HV98}, is defined when $N\ge 3$ as the simplicial complex whose vertices are the conjugacy classes of nontrivial proper free factors of $F_N$, and higher dimensional simplices correspond to chains of inclusions of free factors. (When $N=2$, one has to modify this definition by adding an edge between any two complementary free factors to ensure that $\mathcal{FF}_2$ remains connected, and $\mathcal{FF}_2$ is isomorphic to the Farey graph). There is a natural, coarsely well-defined map $\psi:CV_N\to\mathcal{FF}_N$, that maps any tree $T\in CV_N$ to one of the conjugacy classes of the cyclic free factors of $F_N$ generated by an element of $F_N$ whose axis in $T$ projects to an embedded simple loop in the quotient graph $T/F_N$. When equipped with the simplicial metric, the free factor complex is Gromov hyperbolic  (\cite{BF12}, see also \cite{KR12}). Bestvina and Reynolds, and independently Hamenstädt, have determined its Gromov boundary $\partial\mathcal{FF}_N$.  

\begin{theo}\label{factor-boundary} (Bestvina-Reynolds \cite{BR13}, Hamenstädt \cite{Ham12})
There exists a unique $\text{Out}(F_N)$-equivariant homeomorphism $\partial\psi:\mathcal{AT}\to\partial \mathcal{FF}_N$, so that for all $T\in\mathcal{AT}$ and all sequences $(T_n)_{n\in\mathbb{N}}\in CV_N^{\mathbb{N}}$ that converge to $T$, the sequence $(\psi(T_n))_{n\in\mathbb{N}}$ converges to $\partial\psi(T)$.
\end{theo}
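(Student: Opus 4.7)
The plan is to define $\partial\psi:\mathcal{AT}\to\partial\mathcal{FF}_N$ directly from the coarse projection $\psi$ and then verify it is an equivariant homeomorphism, leveraging Gromov hyperbolicity of $\mathcal{FF}_N$ together with the structural results already recalled in Section~\ref{sec-arational}. Given $[T]\in\mathcal{AT}$, pick any sequence $(T_n)\in CV_N^{\mathbb{N}}$ converging projectively to a representative of $[T]$. The first step is to verify that $(\psi(T_n))$ is unbounded in $\mathcal{FF}_N$: otherwise, after extraction the candidate loops defining $\psi(T_n)$ would all lie in some fixed proper free factor $F$, which would force a leaf of $L(T)$ to be carried by $F$, contradicting arationality. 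Since $\mathcal{FF}_N$ is hyperbolic, it then suffices to show $(\psi(T_n))$ is a Gromov sequence whose limit in $\partial\mathcal{FF}_N$ depends only on the $\sim$-class of $T$. Comparing two approximating sequences for the same $[T]$ via the unique duality Theorem~\ref{arational-duality} handles this: two different limits would produce two non-$\sim$-equivalent arational trees sharing a common dual current, contradicting the theorem.

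For injectivity, if $[T]\neq[T']$ in $\mathcal{AT}$, then $L(T)\neq L(T')$ and Theorem~\ref{arational-duality} shows that no current is simultaneously dual to both; projecting any approximating sequences to $\mathcal{FF}_N$, this forces their Gromov products to stay bounded, giving distinct boundary points. For surjectivity, given $\xi\in\partial\mathcal{FF}_N$ and $x_n\to\xi$, lift each $x_n$ to some $T_n\in CV_N$ with $\psi(T_n)=x_n$ (up to bounded error), and extract using compactness of $\overline{CV_N}$ a projective limit $T_\infty\in\partial CV_N$. The crux is showing $T_\infty$ is arational: if some leaf of $L(T_\infty)$ were carried by a proper free factor $F$, one would locate $\psi(T_n)$ within bounded $\mathcal{FF}_N$-distance of the vertex $[F]$ for large $n$, contradicting $x_n\to\xi$.

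Finally, the $\text{Out}(F_N)$-equivariance of $\partial\psi$ is inherited from that of $\psi$ and of the relation $\sim$, and both continuity and uniqueness follow from the sequential characterization built into the statement: if $T^{(k)}\to T$ in $\mathcal{AT}$, choosing approximating sequences in $CV_N$ and diagonalising yields $\partial\psi(T^{(k)})\to\partial\psi(T)$, and the converse runs symmetrically using injectivity.

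The main obstacle is the surjectivity step, specifically ruling out non-arational projective limits of lifts of sequences that go to infinity in $\mathcal{FF}_N$. This cannot be done by formal manipulations of the duality results alone — it requires the fine analysis, carried out in \cite{BR13} and \cite{Ham12}, of how $\psi$ behaves along folding paths towards non-arational trees, showing that such paths fellow-travel the link of a fixed proper free factor. For the purposes of the present paper, this theorem can simply be cited from those two references.
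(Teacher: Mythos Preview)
The paper does not prove this theorem at all: it is stated with attribution to Bestvina--Reynolds \cite{BR13} and Hamenst\"adt \cite{Ham12} and then immediately used. Your own final sentence --- that for the purposes of this paper the result can simply be cited --- is exactly what the paper does, so in that sense your proposal and the paper agree.

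Your sketch of how the cited proof goes is broadly reasonable, but note a technical mismatch: you repeatedly invoke Theorem~\ref{arational-duality} to handle pairs of trees in $\mathcal{AT}$, yet that theorem as stated in this paper requires $T_1\in\widetilde{\mathcal{FI}}$, i.e.\ the \emph{free} arational case. Arational surface trees are not covered by the statement you cite, so the injectivity and well-definedness arguments would need the stronger versions actually proved in \cite{BR13,Ham12}. You also correctly identify that the surjectivity step (non-arational limits project to bounded sets in $\mathcal{FF}_N$) is the substantive part and cannot be extracted from the tools assembled here; that is precisely why the paper defers the whole result to the references.
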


As a consequence of Theorems \ref{convergence-drift}, \ref{Poisson} and \ref{factor-boundary}, we therefore get the following result. The first part of the statement was obtained with different methods by Calegari and Maher \cite[Theorem 5.34]{CM13}, who worked in the more general context of isometry groups of (possibly nonproper) hyperbolic metric spaces.

\begin{theo}
Let $\mu$ be a probability measure on $\text{Out}(F_N)$, such that $gr(\mu)$ is nonelementary. Then for $\mathbb{P}$-almost every sample path $\mathbf{g}:=(g_n)_{n\in\mathbb{N}}$ of the random walk $(\text{Out}(F_N),\mu)$, there exists $\xi(\mathbf{g})\in\partial\mathcal{FF}_N$, such that for all $x\in\mathcal{FF}_N$, the sequence $(g_n x)_{n\in\mathbb{N}}$ converges to $\xi(\mathbf{g})$. The hitting measure $\nu$ on $\partial\mathcal{FF}_N$ is the unique $\mu$-stationary measure on $\partial\mathcal{FF}_N$. If in addition, the measure $\mu$ has finite first logarithmic moment with respect to the word metric on $\text{Out}(F_N)$, and finite entropy, then $(\partial\mathcal{FF}_N,\nu)$ is the Poisson boundary of $(\text{Out}(F_N),\mu)$. 
\qed
\end{theo}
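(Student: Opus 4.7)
The plan is to transfer Theorem \ref{convergence-drift}, Proposition \ref{unique} and Theorem \ref{Poisson} from $\mathcal{FI}$ to $\partial\mathcal{FF}_N$ via the Bestvina--Reynolds--Hamenstädt homeomorphism $\partial\psi:\mathcal{AT}\to\partial\mathcal{FF}_N$ of Theorem \ref{factor-boundary}. The idea is that $\mathcal{FI}\subseteq\mathcal{AT}$ embeds $\text{Out}(F_N)$-equivariantly into $\partial\mathcal{FF}_N$, and the coarse $\text{Out}(F_N)$-equivariant map $\psi:CV_N\to\mathcal{FF}_N$ intertwines the dynamics on outer space with the dynamics on the free factor complex.

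For the convergence statement, I would first fix a basepoint $T_0\in CV_N$ and apply Theorem \ref{convergence-drift}: for $\mathbb{P}$-a.e. sample path $\mathbf{g}$, the sequence $(g_nT_0)_{n\in\mathbb{N}}$ accumulates onto a $\sim$-class $\xi_{\mathcal{FI}}(\mathbf{g})\in\mathcal{FI}\subseteq\mathcal{AT}$. By the continuity clause of Theorem \ref{factor-boundary}, the sequence $\psi(g_nT_0)=g_n\psi(T_0)$ converges in $\mathcal{FF}_N\cup\partial\mathcal{FF}_N$ to the single boundary point $\xi(\mathbf{g}):=\partial\psi(\xi_{\mathcal{FI}}(\mathbf{g}))$. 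Since $\text{Out}(F_N)$ acts by isometries on the Gromov hyperbolic complex $\mathcal{FF}_N$, for any $x\in\mathcal{FF}_N$ the sequences $(g_nx)$ and $(g_n\psi(T_0))$ remain at the bounded distance $d_{\mathcal{FF}_N}(x,\psi(T_0))$; in a hyperbolic space any two such sequences converge to the same point of the Gromov boundary, so $g_nx\to\xi(\mathbf{g})$ as required. In particular the hitting measure $\nu$ on $\partial\mathcal{FF}_N$ is just the pushforward $(\partial\psi)_\ast\nu_{\mathcal{FI}}$ of the hitting measure on $\mathcal{FI}$.

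For uniqueness of the $\mu$-stationary measure, I would take any $\mu$-stationary Borel probability measure $\lambda$ on $\partial\mathcal{FF}_N$ and transfer it via the $\text{Out}(F_N)$-equivariant homeomorphism $(\partial\psi)^{-1}$ to a $\mu$-stationary measure $\lambda'$ on $\mathcal{AT}$. The key step is to show that $\lambda'$ is actually concentrated on $\mathcal{FI}$: this is exactly the argument of Proposition \ref{stationary-on-outer-space}, in which Lemma \ref{disjoint-translations} is applied to the measurable sets of simplices corresponding to non-free arational trees, indexed by the finite collection of cyclic point stabilizers coming from Reynolds' classification. The nonelementarity assumption makes all these $gr(\mu)$-orbits infinite, so their $\lambda'$-measure vanishes. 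Proposition \ref{unique} then identifies $\lambda'|_{\mathcal{FI}}$ with $\nu_{\mathcal{FI}}$, hence $\lambda=(\partial\psi)_\ast\nu_{\mathcal{FI}}=\nu$.

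For the Poisson boundary statement, under the additional moment and entropy hypotheses, Theorem \ref{Poisson} asserts that $(\mathcal{FI},\nu_{\mathcal{FI}})$ is the Poisson boundary of $(\text{Out}(F_N),\mu)$. Since $\partial\psi$ restricts to an $\text{Out}(F_N)$-equivariant Borel isomorphism $\mathcal{FI}\to\partial\psi(\mathcal{FI})\subseteq\partial\mathcal{FF}_N$ which sends $\nu_{\mathcal{FI}}$ to $\nu$, and since $\nu$ is supported on $\partial\psi(\mathcal{FI})$ by the argument above, the measure $G$-spaces $(\mathcal{FI},\nu_{\mathcal{FI}})$ and $(\partial\mathcal{FF}_N,\nu)$ are isomorphic, and the latter inherits maximality. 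The only point requiring any real work is the concentration of $\lambda'$ on $\mathcal{FI}$ in step two, but this is essentially a verbatim copy of the Kaimanovich--Masur / Ballmann / Woess maximum principle argument already used in Proposition \ref{stationary-on-outer-space}; everything else is formal transport through $\partial\psi$.
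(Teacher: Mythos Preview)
Your proposal is correct and follows exactly the route the paper intends: the paper simply records the theorem as an immediate consequence of Theorems \ref{convergence-drift}, \ref{Poisson} and \ref{factor-boundary}, and you have faithfully unpacked that transfer through $\partial\psi$, including the one nontrivial point (using Lemma \ref{disjoint-translations} as in Proposition \ref{stationary-on-outer-space} to push any stationary measure on $\mathcal{AT}$ onto $\mathcal{FI}$). The only cosmetic imprecision is writing $\psi(g_nT_0)=g_n\psi(T_0)$: since $\psi$ is only coarsely equivariant these two sequences are merely at bounded distance, but you already invoke exactly that bounded-distance argument two lines later, so no harm is done.
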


\bibliographystyle{amsplain}
\bibliography{/Users/Camille/Documents/Bibliographie}

\end{document}